\newtheorem{lemma}{Lemma}[section]
\newtheorem{proposition}[lemma]{Proposition}
\newtheorem{remark}[lemma]{Remark}
\newtheorem{theorem}{Theorem}
\newtheorem{definition}[lemma]{Definition}
\newtheorem{corollary}[lemma]{Corollary}
\newtheorem{conjecture}[lemma]{Conjecture}
\newtheorem{question}[lemma]{Question}
\newcommand{\CC}{\mathcal{C}}
\newcommand{\OO}{\mathcal{O}}
\author{Lael Edwards-Costa\thanks{
Department of Mathematics,
Pennsylvania State University,
University Park, PA 16802,
USA;
lael.costa@psu.edu}
}
\title{Outer length billiards on polygons}
\begin{document}
\maketitle

\abstract{The classical inner and outer billiards can be formulated in
    variational terms, with length and area as the respective generating
    functions. The other two combinations, ``inner with area'' and ``outer with
    length,'' are more recently described. Here, we consider the
    latter system in the special case of polygonal tables. We describe the
    behavior of orbits far away from the table and pose conjectures regarding an
    escaping orbit when the table is a square. This paper is meant to complement
    \cite{smooth} which handles smooth tables with positive curvature.
}

\section{Introduction}

\subsection{Familiar billiard systems}

Fix a convex body $K$ in the plane. We recall the familiar definition of the Birkhoff
billiard in $K$. If $x$ and $y$ are points on the boundary of $K$ such that
$\partial K$ has a tangent line at $y$, then we choose $z\in\partial K$ such
that the chords $xy$ and $yz$ make equal and reflected angles with the tangent
line at $y$. The billiard map can be written down in several different
coordinates, but geometrically it is the map which sends $xy$ to $yz$.

The outer billiard is a dynamical system on points in $\mathbb{R}^2\setminus K$.
Given a point $x$ outside $K$, there are exactly two support lines to $K$ which
pass through $x$. Let $\ell$ be the support line to $K$ passing through $x$,
such that $K$ is on the left when moving from $x$ towards $K$. If $\ell$
intersects with $K$ at a single point $p$, we construct the outer billiard map
by reflecting $x$ through $p$. The resulting point $y$ is the image of $x$ under
the outer billiard map.

\begin{remark}
    In general, outer billiards in the plane has no immediate connection to its
    inner counterpart. However, the two systems are dual on the sphere
    \cite[Chapter 9]{tab_book}.
\end{remark}

\subsection{Variational formulations}

The Birkhoff billiard's reflection law can be replaced with the following
variational formulation. An inscribed broken line connecting boundary
points $x_0,\dots,x_n$ in $K$ is a trajectory of the billiard map if and only if
its length is locally extremized among inscribed broken lines with endpoints
$x_0$ and $x_n$.. In other words, if
\begin{equation}
    \label{eq:extremized}
    L:=\sum_{i=0}^{n-1}\abs{x_{i+1}-x_i},
\end{equation}
then $\pdv{L}{x_i}=0$ exactly if $x_{i+1}=T(x_i)$ for $i=1,\dots,n-1$.
Likewise, outer billiard
trajectories are circumscribed broken lines with extremal algebraic area. Proofs
of these facts can be found in \cite{tab_book}.

We have addressed ``inner with length'' and ``outer with area,'' but what about
the other two combinations?

The literature contains several results concerning the inner billiard which
extremizes area. It is called the \emph{symplectic inner billiard} and is
constructed as follows: given $x,y\in\partial K$ such that $\partial K$ has a
tangent line at $y$. We construct a line $\ell$ parallel to the tangent line at
$y$ passing through $x$. Generically, $\ell$ intersects $\partial K$ at two
points. One of them is $z$; label the second $z$. The symplectic billiard map
takes the chord $xy$ to the chord $yz$. See \cite{symplectic} for an
introduction to this system and \cite{polysymp, pairs} for results in the
polygonal case. 

This leaves only ``outer with length,'' which is the topic of the present
article. This system was first described in \cite{twist}, and a number of
analytical results were obtained in \cite{bbf}. The quartet of billiards is also
treated in \cite{bialy}.

We now give its construction.

\subsection{Main definition}
\label{s:construction}

With $K$ convex, we wish to define a map $T:\mathbb{R}^2\setminus
K\to\mathbb{R}^2\setminus K$ such that a broken line $x_0,\dots,x_n$ is a
trajectory of $T$ if and only if its length is locally extremal in an analogous
manner to (\ref{eq:extremized}) above.

\begin{definition}
    Fix $K$ convex and a point $x$ outside of $K$ such that $x$ does not lie on
    the clockwise extension of a line segment contained in the boundary of $K$.
    There are two support lines to $K$ passing through $x$; label them $\ell_1$
    and $\ell_2$ so that traveling to $x$ from $K$ along $\ell_1$ keeps $K$ on
    the left (as shown in Figure \ref{fig:construction}). There is a unique
    circle $C$ tangent to both lines such that the point of tangency with
    $\ell_2$ coincides with $\ell_2$'s intersection with $K$. This circle has
    three mutual support lines with $K$: $\ell_1$ and $\ell_2$, as well as a
    third which we label $\ell_3$. We define $y=T(x)$ to be the point
    $\ell_2\cap\ell_3$.
\end{definition}

\begin{figure}
    \centering
    \includegraphics[width=0.6\textwidth]{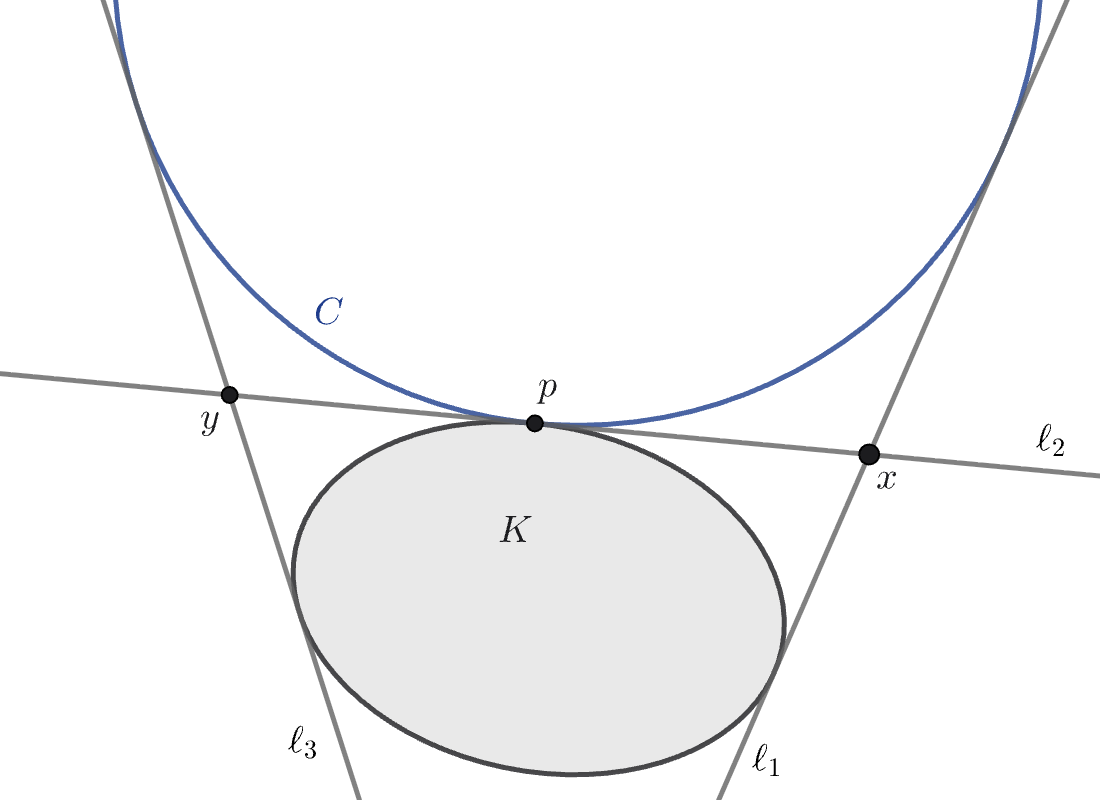}
    \caption{The construction of the outer length billiard about the convex body
    $K$.}
    \label{fig:construction}
\end{figure}

Lemma 3.1 of \cite{twist} verifies that this definition meets the requirements.

\subsection{Structure of the paper}
In Section \ref{sec:basics}, we lay out some fundamental tools in the study of
the outer length billiard around polygons. In Section \ref{sec:infinity}, we
give a proof of a remarkably universal fact about the limiting behavior of the
map. In Section \ref{sec:centers} the centers of the auxiliary circles are
considered as a dynamical system in their own right and a surprising connection
to the usual outer billiard is explored. Finally, in Section
\ref{sec:experiment} we present experimental results. In particular, Section
\ref{sec:square} presents compelling evidence for the existence of an escaping
orbit about the square.

\paragraph*{Acknowledgements}
The author would like to thank his doctoral advisor, Serge Tabachnikov, for his
support throughout this project. He is also grateful to Peter Albers for many
productive discussions on these and related topics, and to the Universit{\"a}t
Heidelberg for its generous hospitality. The author also thanks Mariia Kiyashko
for giving him a hint regarding one of the lemmas.

\section{Basic analysis}
\label{sec:basics}

% Singularities & discontinuities
\subsection{Singularity diagrams}

Let $Q$ be a convex $n$-gon. If $x$ lies on a ray which extends clockwise along
one of the sides of $Q$, there is no choice of auxiliary circle which makes $T$
continuous at $x$. We thus choose not to define the map here, or at any of its
iterated preimages. This reduces the domain of the map $T$ by a set of measure
0. We can numerically approximate this set up to a given number of iterations,
producing images of the type shown in Figure \ref{fig:tri_sing}.

\begin{figure}
    \centering
    \includegraphics[width=0.48\textwidth]{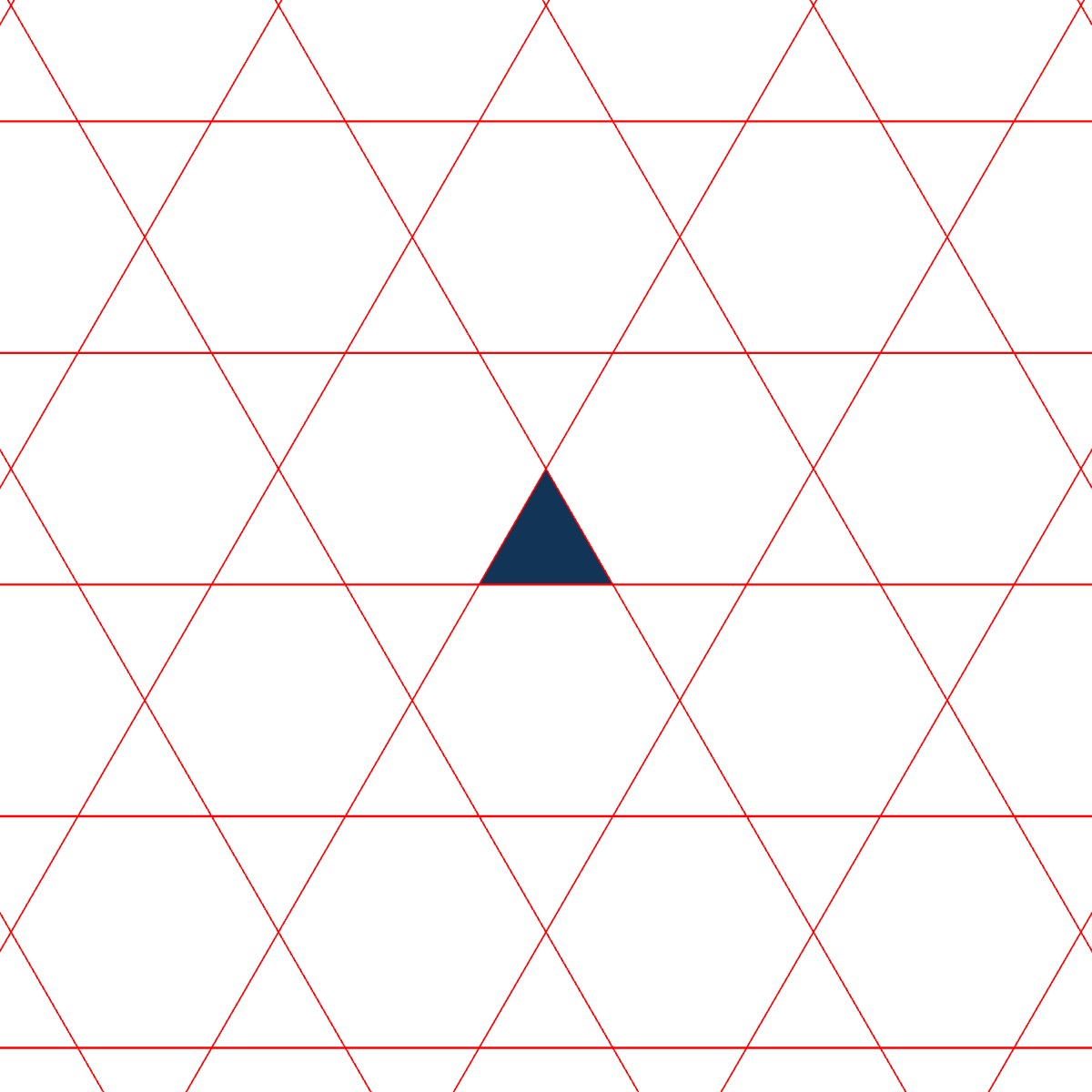}
    \hfill
    \includegraphics[width=0.48\textwidth]{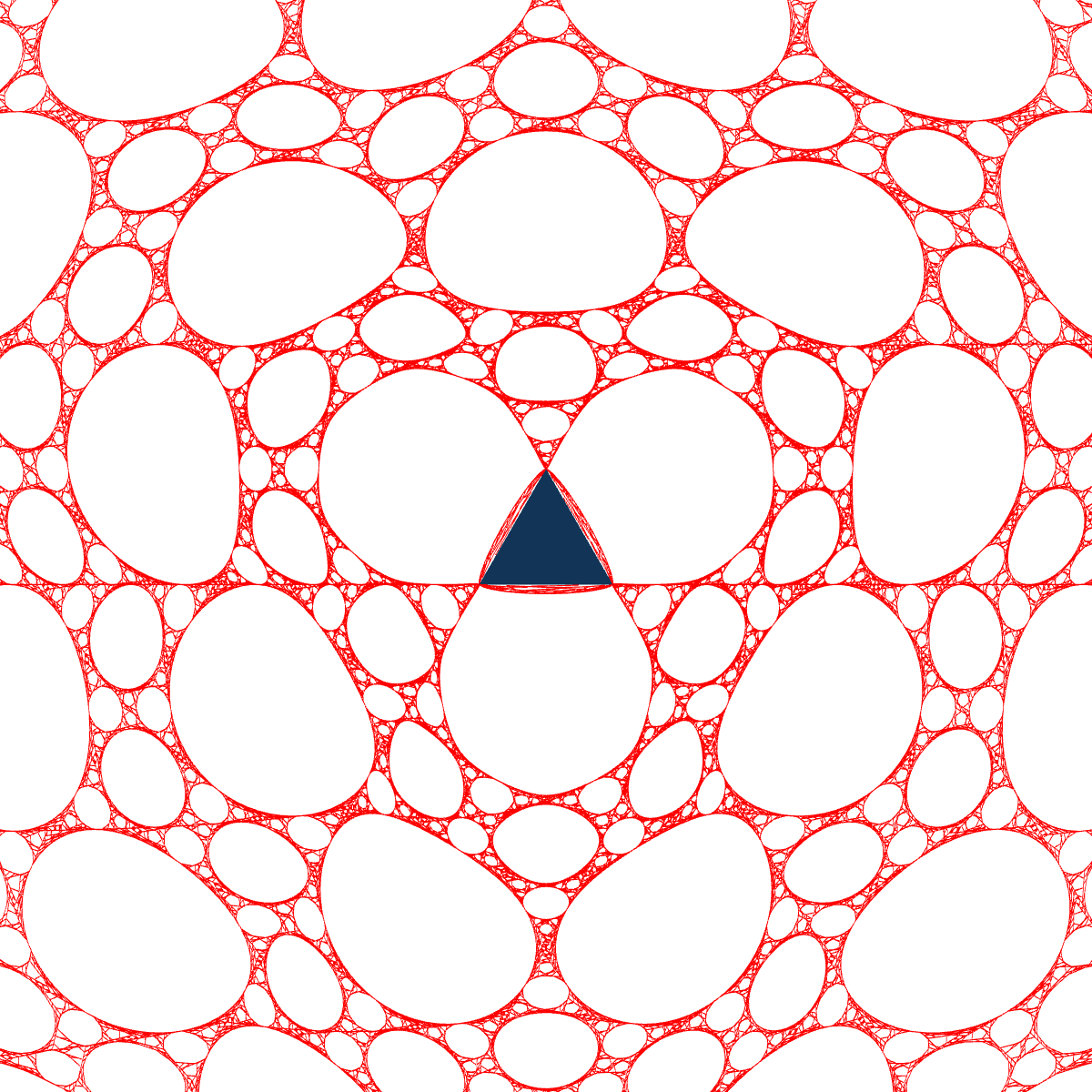}
    \caption{The singularity portrait of the equilateral triangle under the
        outer area
    (left) and outer length (right) billiards.}
    \label{fig:tri_sing}
\end{figure}

For any non-singular point, the three support lines used in the construction of
$T$ pass through vertices of $Q$. All points in a connected component of the
domain of $T$ ``see'' the same vertices and the map is continuous on each
component.

% Line segment
\subsection{The 2-gon}
\label{sec:2-gon}

We can fully analyze the outer length billiard map when the table is a line
segment. In this case, the set of singularities consists of the line containing
the segment 

\begin{figure}
    \centering
    \includegraphics[width=0.4\textwidth]{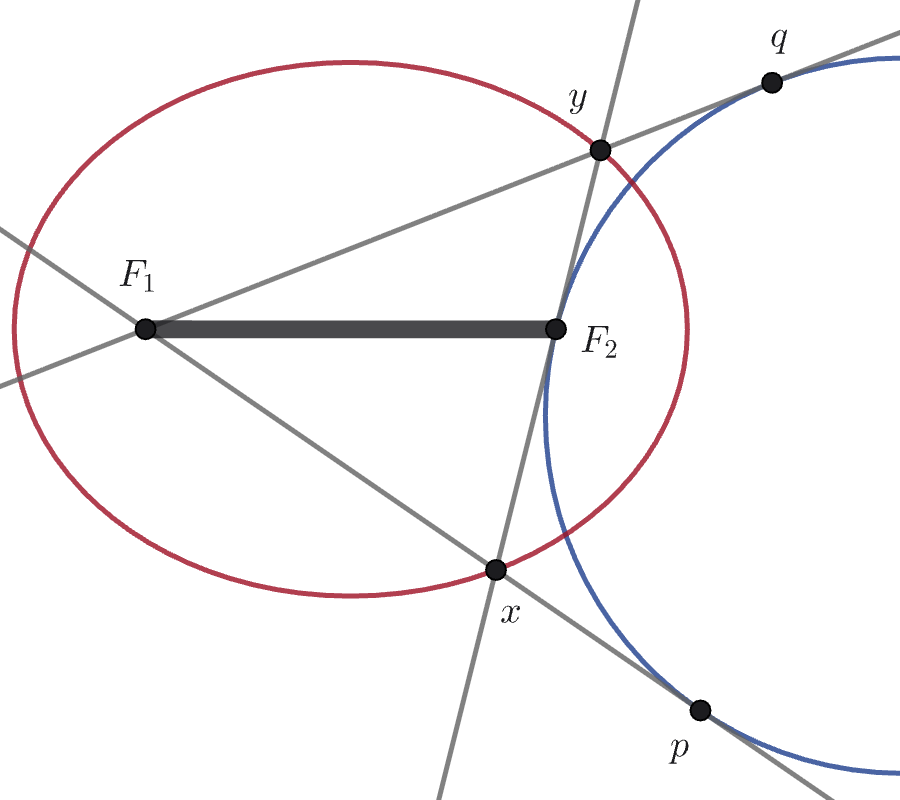}
    \caption{The construction of the outer length billiard about a 2-gon and the
    ellipse guaranteed by Proposition \ref{prop:2-gon}.}
    \label{fig:2gon}
\end{figure}

\begin{proposition}
    \label{prop:2-gon}
    Let $Q$ be a 2-gon and let $x$ be a point not collinear with $Q$. Then $x$
    and $T(x)$ lie on an ellipse $E$ with foci at the endpoints of $Q$. The
    orbit under $T$ can be identified with a usual (inner) billiard orbit in
    $E$, and in particular, the forward and reverse images are attracted to
    the major axis of $E$.
\end{proposition}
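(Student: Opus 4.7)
The plan is to identify the three support lines in the construction of $T(x)$ as the sides of a triangle, to recognize the auxiliary circle $C$ as an excircle of that triangle, and then to apply the classical excircle tangent-length formulas to extract the ellipse.

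First I would fix labels: write $F_1, F_2$ for the endpoints of $Q$, chosen so that $\ell_2$ is the support line through $x$ with $\ell_2 \cap Q = \{F_1\}$; then $C$ is tangent to $\ell_2$ at $F_1$. Since $F_1$ already lies on $C$, the line $\ell_2$ is the only tangent to $C$ through $F_1$, so the third mutual support line $\ell_3$ must pass through the other endpoint $F_2$. Consequently $\ell_1, \ell_2, \ell_3$ bound a triangle with vertices $x$, $y = T(x) = \ell_2 \cap \ell_3$, and $F_2 = \ell_1 \cap \ell_3$, and $C$ is tangent to all three of its sides. The construction's orientation convention (illustrated in Figure \ref{fig:construction}) places $C$ on the side of $\ell_2$ opposite $F_2$, identifying it as the excircle of $\triangle x y F_2$ opposite vertex $F_2$. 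Writing $s$ for the semi-perimeter, the classical excircle tangent-length formula gives $|xF_1| = s - |xF_2|$, since $|xF_1|$ is exactly the tangent length from $x$ to this excircle along side $xy$; symmetrically $|yF_1| = s - |yF_2|$. Rearranging, $|xF_1| + |xF_2| = |yF_1| + |yF_2| = s$, so $x$ and $T(x)$ both lie on the ellipse $E$ with foci $F_1, F_2$ and focal sum $s$.

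For the identification with the inner billiard in $E$, note that the chord $xy$ lies on $\ell_2$ and hence passes through the focus $F_1$. Iterating the construction at $y$, which lies on the opposite side of the line through $Q$ from $x$ (because the excircle tangent point $F_1$ is interior to side $xy$), swaps the roles of the two support lines, so the next chord $y \to T(y)$ lies on a line through $F_2$. This alternation matches the optical property of the ellipse: an incoming focal chord through one focus reflects off $E$ as an outgoing focal chord through the other, which is exactly the inner billiard reflection law at $y$. Induction then places the whole orbit on $E$ as an inner billiard trajectory whose consecutive chords pass alternately through $F_1$ and $F_2$. Attraction of such focal-chord orbits to the major axis in both time directions is a classical property of the elliptic inner billiard, following from monotonicity of the angle between the chord and the major axis at each reflection.

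The main obstacle is really just the combinatorial check that $C$ is specifically the excircle opposite $F_2$ rather than one of the other three tritangent circles of $\triangle x y F_2$; this requires reading off the orientation convention from the construction. Once this identification is in place, the proof reduces to an application of elementary triangle geometry together with the classical optical property of the ellipse.
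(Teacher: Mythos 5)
Your proof is correct and rests on the same mechanism as the paper's: the paper chains the equal-tangent-segment identities around the auxiliary circle directly to get $|F_1x|+|xF_2|=|F_1y|+|yF_2|$, which is exactly what your excircle tangent-length formula for $\triangle xyF_2$ packages up, and both arguments then invoke the same classical fact that consecutive focal chords of an ellipse are related by the inner billiard reflection. The only difference is presentational (excircle bookkeeping versus an explicit chain of tangent lengths), so no further comment is needed.
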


\begin{proof}
    Consider Figure \ref{fig:2gon}, which shows the construction of $y:=T(x)$.
    Repeatedly using the fact that the two segments incident to a point and
    tangent to a circle have the same length, we have that
    \begin{align*}
        \abs{F_1x}+\abs{xF_2}
        &=\abs{F_1x}+\abs{xp}\\
        &=\abs{F_1p}=\abs{F_1q}\\
        &=\abs{F_1y}+\abs{yq}\\
        &=\abs{F_1y}+\abs{yF_2},
    \end{align*}
    which gives the first statement of the proposition. The remaining statements
    follow from the well known fact that the two transfocal chords incident to a
    given point on an ellipse are related by the usual inner billiard
    transformation in that ellipse (see, for instance, \cite{tab_book}).
\end{proof}

% \begin{figure}
%     \caption{A typical orbit about a 2-gon.}
%     \label{fig:2gon_orbit}
% \end{figure}

% The eye is drawn to the intersections of the chords which connect consecutive
% points in an orbit about a 2-gon: these intersections appear to lie on confocal
% ellipses. Indeed, this is the case.

% \begin{prop}
%     Fix a 2-gon and a point $x$. Then the points $xF_1\cap T^{-2}(x)F_2$ and
%     $xF_2\cap T^2(x)F_1$ lie on an ellipse with foci $F_1$ and $F_2$.
% \end{prop}

\subsection{Triangles}
\label{sec:triangle}

The singularity portrait in Figure \ref{fig:tri_sing} (right) shows that even
the equilateral triangle gives rise to complicated dynamics. In attempting to
analyze this situation, we stumbled into a general fact about the geometry of
triangles.

Recall the following terminology from Euclidean geometry \cite{extouch}.

\begin{definition}
    Let $T=\triangle XYZ$. Consider the three lines extending the sides of $T$.
    There are three circles which are tangent to all three lines and do not
    overlap $T$. These circles are called the \emph{extouch circles} of $T$ and
    are drawn in blue in Figure \ref{fig:extouch}. The points of tangency
    between the circles and $T$ form a triangle $\triangle ABC$, which is called
    the \emph{extouch triangle} of $T$.
\end{definition}

\begin{figure}
    \centering
    \includegraphics[width=0.4\textwidth]{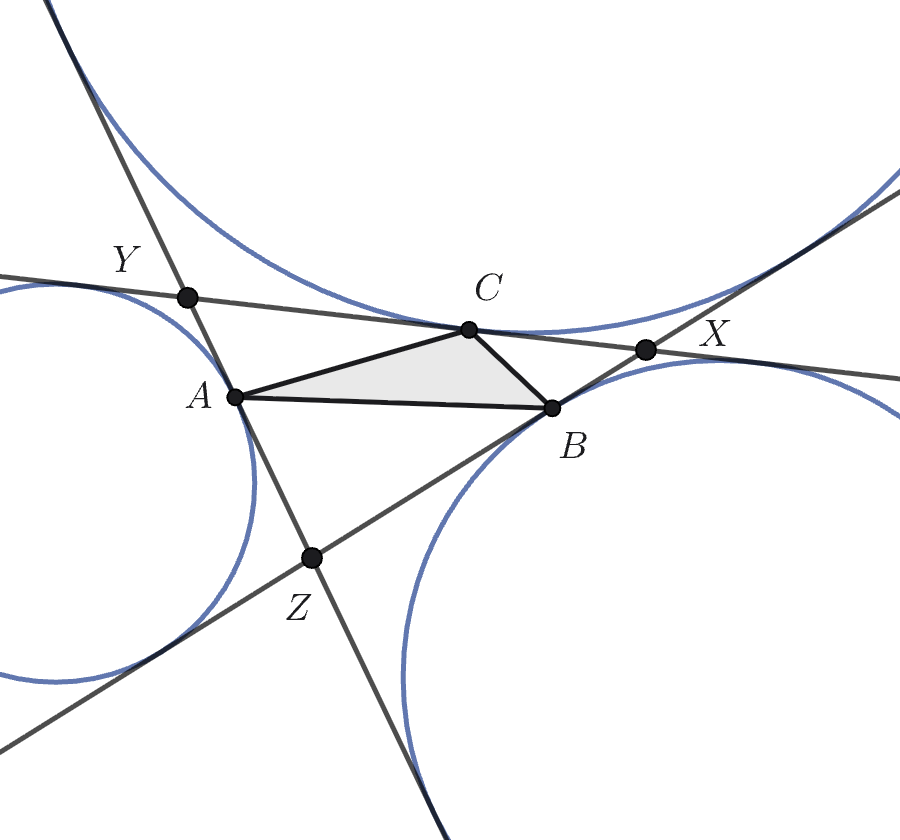}
    \caption{The triangle $\triangle ABC$ is the extouch triangle of $\triangle
    XYZ$.}
    \label{fig:extouch}
\end{figure}

Considering Figure \ref{fig:extouch}, it is clear that $\triangle ABC$ is the
extouch triangle of $\triangle XYZ$ exactly when $\triangle XYZ$ is a 3-periodic
orbit of the outer length billiard about $\triangle ABC$. 

\begin{theorem}
    \label{thm:extouch}
    Let $T$ be a triangle. Then there is a triangle $R$ such that $T$ is
    the extouch triangle of $R$.
\end{theorem}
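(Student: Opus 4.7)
The approach is variational, building on the equivalence noted just before the theorem: $\triangle ABC$ is the extouch triangle of $\triangle XYZ$ if and only if the ordered triple $(X,Y,Z)$ is a $3$-periodic orbit of the outer length billiard $T$ about $\triangle ABC$. The theorem therefore reduces to producing such a $3$-periodic orbit for any given $\triangle ABC$.

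By the variational characterization of $T$ in Section~\ref{s:construction}, a $3$-periodic orbit of $T$ is precisely a critical point of the perimeter
\[
P(X,Y,Z) \;=\; |XY|+|YZ|+|ZX|
\]
on the space $\mathcal{M}$ of triangles $\triangle XYZ$ circumscribed about $\triangle ABC$ with $A\in YZ$, $B\in ZX$, $C\in XY$. My plan is to obtain such a critical point by minimizing $P$ on $\mathcal{M}$. Parameterizing a configuration by the directions of the three lines through $A$, $B$, and $C$ exhibits $\mathcal{M}$ as an open subset of $(\mathbb{R}/\pi\mathbb{Z})^3$, and the existence of large outer triangles shows $\mathcal{M}\neq\emptyset$.

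The key step is to verify that a minimum is attained in the interior. I would analyse the two kinds of boundary degeneration. In the first, two of the three lines become parallel, so one vertex of $\triangle XYZ$ escapes to infinity and $P\to\infty$; minimizing sequences therefore stay away from this stratum. In the second, one line rotates into alignment with a side of $\triangle ABC$, forcing a vertex of $\triangle XYZ$ onto a vertex of $\triangle ABC$ while $P$ remains bounded. I expect this second scenario to be the main obstacle: ruling it out requires a first-order perturbation estimate showing that slightly tilting the aligned line strictly decreases $P$, so no boundary configuration of this type can be a global minimizer. Granting this, the resulting interior minimizer $(X^*,Y^*,Z^*)$ is a critical point of $P$, hence a $3$-periodic orbit of $T$, and $R = \triangle X^* Y^* Z^*$ satisfies the conclusion of the theorem.
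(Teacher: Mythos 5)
Your reduction of the theorem to producing a $3$-periodic orbit of the outer length billiard about $T=\triangle ABC$, i.e.\ a critical point of the perimeter $P$ on the space $\mathcal{M}$ of circumscribed triangles, is a legitimate starting point and is exactly the equivalence the paper notes before the theorem. But the strategy of obtaining that critical point by \emph{minimizing} $P$ cannot work, and the step you flag as "the main obstacle" is in fact false. Any triangle $XYZ\in\mathcal{M}$ contains $\triangle ABC$ as a convex subset, so by monotonicity of perimeter under inclusion of convex bodies, $P(XYZ)>P(ABC)$. Moreover $P(ABC)$ is the actual infimum of $P$ over $\overline{\mathcal{M}}$: letting the line through $A$ rotate onto side $AB$, the line through $B$ onto $BC$, and the line through $C$ onto $CA$ sends $XYZ$ to the degenerate configuration $XYZ=ABC$ (with $Z\to B$, $X\to C$, $Y\to A$) and $P\to P(ABC)$. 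So the infimum is approached at a corner of the parameter box and is never attained in the interior; in particular, near that corner, tilting an aligned line back into general position strictly \emph{increases} $P$ rather than decreasing it, contradicting the perturbation estimate you were hoping to prove. (A sanity check: for equilateral $ABC$ of side $1$ the desired critical configuration is the anticomplementary triangle with perimeter $6$, while the infimum over $\overline{\mathcal{M}}$ is $3$.) The interior critical point you need is therefore a saddle/mountain-pass type critical point, not a minimum, and establishing its existence would require a genuinely different and more delicate topological argument, which the proposal does not supply. A secondary gap: even granting an interior critical point of $P$, you would still need to check that it corresponds to the excircle (extouch) tangency configuration required by the definition rather than some other critical circumscribed triangle.

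For comparison, the paper avoids the variational formulation entirely. It writes down the known algebraic relations between the side lengths $a,b,c$ of $T$ and the side lengths $x,y,z$ of a putative $R$, solves two of the relations to express $y$ and $z$ as explicit functions of $x$, and then shows via Heron's formula that the remaining compatibility condition is the vanishing of a continuous function $f(x)$ with $f(a)<0$ and $f(x)\to+\infty$; the Intermediate Value Theorem finishes the proof. If you want to pursue a variational proof, you would need to replace minimization with a min-max scheme, and that is not a routine fix.
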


\begin{corollary}
    Every triangular table admits a 3-periodic orbit under the outer
    length billiard.
\end{corollary}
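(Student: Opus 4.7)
The corollary is immediate from Theorem \ref{thm:extouch} together with the equivalence recorded just before its statement: a triangular table $\triangle ABC$ admits a 3-periodic orbit $\triangle XYZ$ precisely when $\triangle ABC$ is the extouch triangle of $\triangle XYZ$. The substantive work is therefore in proving the theorem, so I now outline how I would approach it.

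I would study the \emph{extouch map} $E$, sending a triangle (up to similarity) to its extouch triangle, and aim to show $E$ is surjective on the space of similarity classes of triangles. Parametrize this space as the open 2-simplex $\Sigma = \{(\alpha,\beta,\gamma):\alpha+\beta+\gamma=\pi,\ \alpha,\beta,\gamma>0\}$, so that $E:\Sigma\to\Sigma$ is continuous. Using the classical tangent-length formulas $|YA|=s-c$, $|ZA|=s-b$ and their cyclic analogs, combined with the law of cosines applied to the subtriangles $\triangle XBC$, $\triangle YCA$, $\triangle ZAB$, one writes the sides of the extouch triangle as explicit trigonometric functions of the angles of $R$; for example, $|BC|^2=(s-a)^2+(s-b)^2-2(s-a)(s-b)\cos\alpha$, and similarly for $|CA|$ and $|AB|$.

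With $E$ in hand I would argue surjectivity by a topological degree argument. The equilateral triangle is a fixed point of $E$ by symmetry, so $E$ is nontrivial. Extending $E$ continuously to $\bar\Sigma$ and analyzing the boundary $\partial\bar\Sigma$ (where some angle of $R$ approaches $0$ or $\pi$) should reveal a controlled degeneration of the extouch triangle, with $E$ mapping $\partial\bar\Sigma$ to itself. Computing the topological degree (which I would conjecture to be $\pm 1$) then gives surjectivity of $E$ and hence the theorem. A useful intermediate check would be to compute the Jacobian of $E$ at the equilateral fixed point: nonsingularity there gives local surjectivity and supports the global degree computation.

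The main obstacle is the boundary analysis: the tangent-length expressions become singular in the degenerate limits, and the extouch triangle can degenerate in more than one way depending on how $R$ collapses. A conceptually cleaner alternative would be a direct geometric construction of $R$ from $T$---for instance, recovering the three excircles from the data of their prescribed tangent points $A,B,C$, in analogy with the classical reconstruction of a triangle from its \emph{contact} triangle via tangent lines to the circumcircle of that contact triangle. The acknowledgment of a hint from Mariia Kiyashko suggests such a construction may indeed exist, and I would look for it before committing to the topological route.
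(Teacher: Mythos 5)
Your reduction of the corollary to Theorem \ref{thm:extouch} is exactly what the paper intends: the equivalence between ``$T$ is the extouch triangle of $R$'' and ``$R$ is a 3-periodic orbit of the outer length billiard about $T$'' is recorded immediately before the theorem, and the corollary carries no independent content. The divergence is in how you would prove the theorem itself. You propose a topological degree argument for the extouch map on the simplex of similarity classes, and you candidly flag the two places where that plan is not yet a proof: the boundary analysis as the triangle degenerates, and the actual computation of the degree. The paper takes a lower-dimensional and fully explicit route: starting from the known relations $a^2=x^2-|R|^2/(yz)$ (and cyclic permutations), it rearranges to $|R|^2=yz(x^2-a^2)=zx(y^2-b^2)=xy(z^2-c^2)$, solves two of these as quadratics to express $y$ and $z$ as explicit functions of $x$, and then uses Heron's formula to reduce the remaining compatibility condition to the vanishing of a single continuous function $f(x)$ satisfying $f(a)<0$ and $f(x)\to+\infty$; the Intermediate Value Theorem finishes the argument. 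This collapses your two-dimensional degree computation to a one-dimensional sign change and entirely sidesteps the degenerate-boundary issues you correctly identify as the main obstacle to your route. On your closing hope for a direct geometric construction: the paper explicitly remarks that the author is not aware of one, ``and not for want of trying,'' so the constructive alternative should not be counted on. As submitted, your argument establishes the corollary conditionally on the theorem but does not yet contain a proof of the theorem.
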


\begin{proof}[Proof of Theorem \ref{thm:extouch}]
    Fix $T=\triangle ABC$ and let $a$, $b$, and $c$ be the lengths of the sides
    of $T$ opposite vertices $A$, $B$, and $C$, respectively, and let $|T|$
    denote the area of $T$.

    If there is a triangle $R=\triangle XYZ$ such that $T$ is the extouch
    triangle of $R$, then $R$ must satisfy the following relations
    \cite{extouch}:

    \begin{gather}
        \label{eq:sl}
        a^2=x^2-\frac{|R|^2}{yz},\quad
        b^2=y^2-\frac{|R|^2}{zx},\quad
        c^2=z^2-\frac{|R|^2}{xy}.\\
        % \label{eq:area}
        % |T|=\frac{(x+y-z)(x-y+z)(-x+y+z)}{4xyz}|R|.
    \end{gather}

    We can rearrange (\ref{eq:sl}) to obtain
    \begin{equation}
        \label{eq:Ryzxa}
        |R|^2=yz(x^2-a^2)=zx(y^2-b^2)=xy(z^2-c^2),
    \end{equation}
    from which we can solve for $y$ and $z$ in terms of $x$:
    \begin{align*}
        y(x)&=\frac{x^2-a^2+\sqrt{x^4+2(2b^2-a^2)x^2+a^4}}{2x},\\
        z(x)&=\frac{x^2-a^2+\sqrt{x^4+2(2c^2-a^2)x^2+a^4}}{2x}.\\
    \end{align*}

    Using Heron's formula, we see that the first equality in (\ref{eq:Ryzxa})
    holds if and only if the function
    $$f(x)=yz(x^2-a^2)-S(S-x)(S-y)(S-z)$$
    (where $S(x)=\frac{1}{2}(x+y(x)+z(x))$ and $y$, $z$, and $S$ are all treated
    as functions of $x$) has a real root.

    Notice that $f$ is continuous in $x$, and that if $x=a$, then $y(x)=b$ and
    $z(x)=c$, so $f(a)=-|T|<0$. Notice also that
    $$\lim_{x\to\infty}\frac{y(x)}{x}=\lim_{x\to\infty}\frac{z(x)}{x}=1,
    \quad\text{so}\quad\lim_{x\to\infty}\frac{S(x)}{x}=\frac{3}{2}.$$
    For large $x$, the first term is asymptotically equal to $x^4-a^2x^2$, while
    the second tends towards
    $$\frac{3}{2}x\left(\frac{3}{2}x-x\right)^3=\frac{3}{16}x^4.$$
    Therefore, $\lim_{x\to\infty}f(x)=+\infty$. We conclude by the
    Intermediate Value Theorem that $f$ must have a zero at some $x_0>a$.

    The triple $(x=x_0, y=y(x_0), z=z(x_0))$ satisfies (\ref{eq:sl}) by
    construction. A triangle with these side lengths has extouch triangle
    congruent to $T$.
\end{proof}

%\begin{figure}
%    \centering
%    \caption{The construction of $S$ from $T$ when $T$ is isosceles.}
%    \label{fig:isosceles}
%\end{figure}

\begin{remark}
    % If $\triangle T$ is isosceles, then one can construct $S$ as shown in Figure
    % \ref{fig:isosceles}. \todo{figure}.
    The author is not aware of a constructive proof, and not for want of trying. 
\end{remark}

\section{Behavior at infinity}
\label{sec:infinity}

\subsection{The outer area billiard}
\label{sec:inf_area}

Let us briefly review the behavior of orbits of the usual outer billiard far
from the table. This subsection's ideas are detailed in \cite{act}.

If $K$ is a convex body in the plane, there is a centrally symmetric convex body
$K_s$ with the property that $K$ and $K_s$ have the same width when viewed from
any angle; $K_s$ can be recovered via the Minkowski difference:
$K_s=\frac{1}{2}(K-K)$. 

Denote by $h_K(\theta)$ the (signed) distance from the origin to the support
line to $K$ which has outward normal in direction $\theta$. If a body $K$
contains the origin, we can construct its symplectic polar dual:

\begin{definition}
    \label{defn:dual}
    The symplectic polar dual to $K\in\mathbb{R}^2$ is denoted $K^*$ and is the
    body whose boundary is defined by the radial function
    $\Gamma(\theta)=\frac{1}{h_K(\theta+\pi/2)}$.
\end{definition}

It should be noted that this definition is a particular case of a construction
which works in a general symplectic space.

Distant orbits of the outer area billiard map around $K$ approximately trace a
curve which is a homothetic to $\partial(K_s)^*$ (this is a special case of the
main result of \cite{act}).

\subsection{The outer length billiard}

We now present a comparable result for the outer length billiard.
First, an observation: the outer length billiard map differs from the outer area
billiard map in a uniformly bounded manner. To phrase this precisely, we restate
\cite[Lemma 4.5]{smooth}:

\begin{lemma}
    \label{lem:uniform}
    Let $x$, $y$, and $p$ be as in Figure \ref{fig:construction}. Then
    $||xp|-|py||\leq\frac{3}{2}d$, where $d$ is the diameter of the body $K$.
\end{lemma}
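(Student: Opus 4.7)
The plan is to exploit the triangular structure given by the three support lines. Setting $z:=\ell_1\cap\ell_3$, the three lines bound a triangle $\triangle xyz$ in which both $K$ and the auxiliary circle $C$ are inscribed. Write $a_1,p,b$ for the tangent points of $K$ with $\ell_1,\ell_2,\ell_3$ respectively, and $q,p,r$ for those of $C$ (sharing $p$ by construction).

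Because $C$ is the incircle of $\triangle xyz$, the classical equal-tangent-length identity from each vertex gives $|xp|=|xq|=s-|yz|$ and $|py|=|yr|=s-|xz|$, where $s$ is the semi-perimeter. Subtracting,
\[
|xp|-|py|=|xz|-|yz|.
\]
Decomposing each of these sides at $K$'s tangency — $|xz|=|xa_1|+|a_1z|$ and $|yz|=|yb|+|bz|$ — this becomes
\[
|xp|-|py|=\bigl(|xa_1|-|yb|\bigr)+\bigl(|a_1z|-|bz|\bigr).
\]
The second summand is controlled immediately: since $a_1,b\in K$, the triangle inequality at $z$ yields $\bigl||a_1z|-|bz|\bigr|\leq|a_1b|\leq d$.

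The main obstacle is the first summand $|xa_1|-|yb|$, which compares tangent lengths to $K$ from two different vertices along two different sides and is not controlled directly by the diameter. The plan is to pivot through the shared tangency at $p$: the two tangent lengths from any external point to a convex body differ by at most the distance between the tangency points, giving $\bigl||xa_1|-|xp|\bigr|\leq|a_1p|\leq d$ and $\bigl||yb|-|yp|\bigr|\leq|bp|\leq d$. Combining these with the identity above, and accounting for the convex configuration of $a_1,p,b$ on $\partial K$ — which couples the three chord lengths $|a_1p|,|pb|,|a_1b|$ so they cannot simultaneously saturate $d$ — should yield the claimed $\tfrac{3}{2}d$ bound. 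The improvement over the naive $3d$ one would read off from three separate diametric chords must come from this coupling, and identifying the sharpest form of it is the delicate point.
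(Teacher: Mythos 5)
The paper itself does not prove this lemma; it is imported verbatim from \cite[Lemma 4.5]{smooth}, so there is no in-house argument to compare yours against. Judged on its own, your framework is the right one --- pass to the triangle $\triangle xyz$ cut out by $\ell_1,\ell_2,\ell_3$, convert $|xp|-|py|$ into a statement about tangent lengths, decompose along $K$'s own tangency points $a_1,p,b$, and pivot through $p$ --- but there are two linked problems. First, $C$ is not the incircle of $\triangle xyz$: it is the excircle opposite $z$. You can read this off the paper's own 2-gon computation in Section \ref{sec:2-gon}, where the chain $|F_1p|=|F_1x|+|xp|$ forces the tangency of $C$ with $\ell_1$ to lie \emph{beyond} $x$, and likewise beyond $y$ on $\ell_3$ (in that example $z=F_1$). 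The correct tangent-length identity is therefore $|xp|-|py|=|yz|-|xz|$, the opposite sign from yours. Second, and fatally for the proof as written: with your (incircle) sign, substituting $|xa_1|=|xp|+e_1$ and $|yb|=|yp|+e_2$ into $|xp|-|py|=(|xa_1|-|yb|)+(|a_1z|-|bz|)$ makes the quantity $|xp|-|py|$ cancel from both sides, leaving only the vacuous identity $0=e_1-e_2+(|a_1z|-|bz|)$. No amount of ``coupling'' among the chords $|a_1p|,|pb|,|a_1b|$ can rescue a tautology.

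With the excircle sign the same three ingredients close immediately and there is no delicate point left. From $|xp|-|py|=(|yb|-|xa_1|)+(|bz|-|a_1z|)$ and the pivots $|xa_1|=|xp|+e_1$, $|yb|=|yp|+e_2$ one gets
\begin{equation*}
2\left(|xp|-|py|\right)=\left(e_2-e_1\right)+\left(|bz|-|a_1z|\right),
\end{equation*}
so that $2\,||xp|-|py||\leq|a_1p|+|pb|+|a_1b|\leq 3d$, which is exactly the claimed bound; the factor $\tfrac{3}{2}$ is simply $3d$ divided by the $2$ that appears because $|xp|-|py|$ occurs on both sides with the same sign, not because the three chords fail to saturate $d$ simultaneously. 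To make this airtight you should also justify that $K$ is inscribed in $\triangle xyz$ (so that $a_1$ lies between $x$ and $z$ and $b$ between $y$ and $z$, validating the decompositions $|xz|=|xa_1|+|a_1z|$ and $|yz|=|yb|+|bz|$) and dispose of the degenerate case $\ell_1\parallel\ell_3$, where $z$ escapes to infinity, e.g.\ by a limiting argument.
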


\begin{remark}
    The result of the preceding lemma can be strengthened slightly: one can
    prove that the bound $\frac{3}{2}d$ can be replaced with $d$ and that this
    improved bound is sharp. However, the proof is highly technical and the
    sharper result offers no advantage in the analysis which follows. 
\end{remark}

The main result of this section is similar in nature to \cite[Theorem
2]{smooth}, but here we deal with polygonal tables instead of smooth tables with
positive curvature.
We make an observation and a definition. If $x$ is far enough
away from the table (say, more than twice the table diameter), then $T^2(x)$ is
close to $x$ and furthermore its angular displacement from $x$ is in the
clockwise direction. This motivates the following definition.

\begin{definition}
    Let $K$ be a convex body, and let $\Omega$ be the set of non-singular points
    with respect to the outer length billiard around $K$.
    The once-around orbit of a point $x\in\Omega$ under an outer billiard map is the set
    $\OO(x):=\{x,T(x),\dots,T^{2N}(x)\}$, where $T^{2N}(x)$ is the first iteration of
    the squared map which ``wraps around.'' That is, if we choose polar
    coordinates such that the point $x$ is on the $\theta=0$ radial, then $N$ is
    the smallest positive integer such that $T^{2(N-1)}(x)$ has positive
    argument while $T^{2N}(x)$ has non-positive argument (where
    $\theta\in[-\pi,\pi)$). If no such $N$ exists, the once-around orbit is
    simply the infinite forward orbit of $x$.
\end{definition}

\begin{theorem}
    \label{thm:circle}
    Let $Q$ be a convex $n$-gon containing the origin, and let $T$ denote the
    outer length billiard map about $Q$. Then there are constants $C_1$ and
    $C_2$ depending on $Q$ such that if $x$ is a point in the domain of $T$ with
    $R:=\abs{x}\geq C_2$, then all points in the once-around orbit $\OO(x)$ fall inside
    the annulus $|r-R| \leq C_1$.
\end{theorem}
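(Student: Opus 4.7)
The plan is to derive an exact single-step formula for the radial change, bound each step using Lemma~\ref{lem:uniform}, and then control the accumulation over the finite once-around orbit. I will set up coordinates along the support line $\ell_2$: letting $p$ denote the vertex of $Q$ on $\ell_2$, $\vec v$ a unit vector along $\ell_2$ pointing from $p$ toward $x$, and writing $x = p + s_x \vec v$, $y = T(x) = p - s_y \vec v$, a direct expansion gives
\[
|y|^2 - |x|^2 \;=\; (s_x + s_y)\bigl[(s_y - s_x) - 2(p \cdot \vec v)\bigr].
\]
By Lemma~\ref{lem:uniform}, $|s_y - s_x| \le \tfrac{3}{2}d$, while $|p \cdot \vec v| \le d$. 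For $|x|=R \gg d$ one has $s_x + s_y \le 2R + O(d)$, so the per-step radial displacement $||y|-|x||$ is $O(d)$ in absolute value.

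The naive accumulation of this bound over the $O(R/d)$ iterates of a once-around orbit yields only $O(R)$ total radial variation, which is too weak. The improvement must come from a cancellation specific to the outer length construction: the ``stretch'' $s_y - s_x$ is determined by the auxiliary circle and third mutual tangent in such a way that it nearly offsets the tangential drift $2(p \cdot \vec v)$. Concretely, I would show that the partial sums
\[
\Sigma_K \;:=\; \sum_{k=0}^{K-1}\bigl[(s_y^{(k)} - s_x^{(k)}) - 2(p_k \cdot \vec v_k)\bigr]
\]
are uniformly bounded as $K$ ranges over the once-around orbit. That $\Sigma_K$ cannot be made small for the outer area billiard (where $s_y \equiv s_x$) is exactly the structural reason outer area orbits on polygons wander radially by $O(R)$; the outer length construction is designed to suppress this drift.

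To implement the bound on $\Sigma_K$, I would try to construct a bounded ``corrected action'' $I(x) := |x|^2 + \phi(x)$, with $\phi$ depending on the current cell of the singularity diagram, such that $I(T(x)) - I(x) = O(1/R)$ per step. Summing $O(R)$ such terms yields $|I(x_k) - I(x_0)| = O(1)$ uniformly in $k$, so $||x_k|^2 - R^2| = O(R)$ and hence $||x_k| - R| = O(1)$; taking $C_1$ to be a suitable multiple of $d$ (and $C_2$ large enough that the asymptotic expansions above are valid) completes the proof. The main obstacle is constructing $\phi$, equivalently verifying the uniform bound on $\Sigma_K$: this requires analyzing how the auxiliary circle $C$ and its third mutual tangent $\ell_3$ with $Q$ evolve as $x$ slides along its support line, and it is precisely here that outer length differs structurally from outer area.
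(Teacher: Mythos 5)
Your setup and your diagnosis of the difficulty are both correct: the per-step bound $\left||y|-|x|\right|=O(d)$ coming from Lemma \ref{lem:uniform} accumulates to $O(R)$ over the $O(R/d)$ steps of a once-around orbit, so some cancellation mechanism is indispensable. But the proposal stops exactly where the proof has to begin. You assert that the partial sums $\Sigma_K$ are uniformly bounded, equivalently that a cell-dependent corrected action $I=|x|^2+\phi$ with small per-step increment exists, and you explicitly defer its construction. That construction is the entire content of the theorem; as written this is a plan with a genuine gap, not a proof. Moreover the quantitative shape you propose (every step changes $I$ by $O(1/R)$) is not the mechanism that actually operates here, and I do not see how to establish it directly.

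The missing ingredient is the \emph{virtual table}: for each non-singular $x_i$ there is a segment, with endpoints at or near two vertices of $Q$, such that replacing $Q$ by this segment leaves the construction of $T(x_i)$ unchanged. By the 2-gon analysis (Proposition \ref{prop:2-gon}), as long as consecutive points share the same virtual table (``steady'' points) the orbit lies \emph{exactly} on an ellipse whose foci are within $2d$ of the origin; the conserved quantity is the string length $|xF_1|+|xF_2|$, and the small eccentricity of that ellipse (Lemma \ref{lem:eccentricity}) converts conservation into radial control. The decisive counting step is Lemma \ref{lem:crossings} and Corollary \ref{cor:unsteady_count}: the ellipse changes only when the orbit crosses one of the $2n$ side extensions of $Q$, so there are at most $2n+1$ jumps in the whole once-around orbit, and each jump moves the ellipse by Hausdorff distance at most $2d$ (Lemmas \ref{lem:foci}, \ref{lem:confocal}, \ref{lem:unsteady}). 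So the correct structure is not ``each step changes the corrected action by $O(1/R)$'' but ``the action is exactly conserved except at $O(n)$ steps, where it changes by $O(d)$.'' If you want to complete your argument along your own lines, the $\phi$ you are looking for is determined by the endpoints of the current virtual table playing the role of foci; proving that this gives an exact invariant on each cell, and that the cell changes only boundedly many times per revolution, is precisely the argument sketched above.
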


The argument hinges on the following observation. Fixing
a convex table $K$ (not necessarily a polygon) and a point $x$, the construction
of the outer length billiard map proceeds by drawing two lines of support, a
circle, and then a third line. We notice that there is a line segment (see
Figure \ref{fig:virtual}) such that if $K$ were replaced by that segment and $x$
were unchanged, the construction would be unchanged, meaning that the circle,
all three support lines, and the image $T(x)$ would be the same.

This line segment, which we refer to as the \emph{virtual table} of the point
$x$, has one endpoint on $\partial K$ and the other either on $\partial K$ or
outside of $K$. In the case that $K$ is a polygon, one endpoint is always a
vertex of $K$ and the other either lies outside or is also a vertex (see Figure
\ref{fig:virtual}).

\begin{figure}
    \centering
    \includegraphics[width=0.48\textwidth]{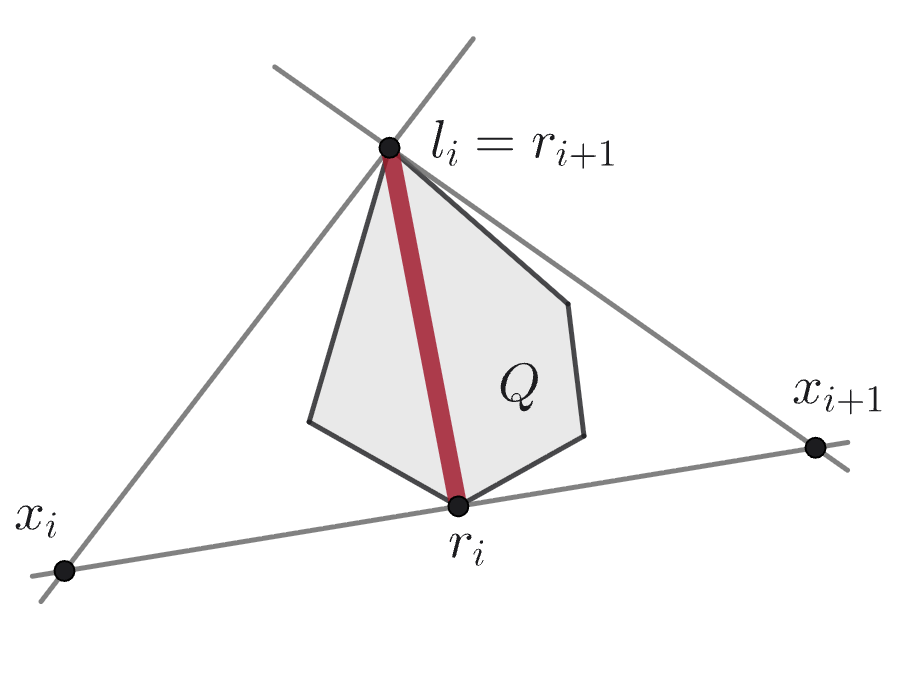}
    \hfill
    \includegraphics[width=0.48\textwidth]{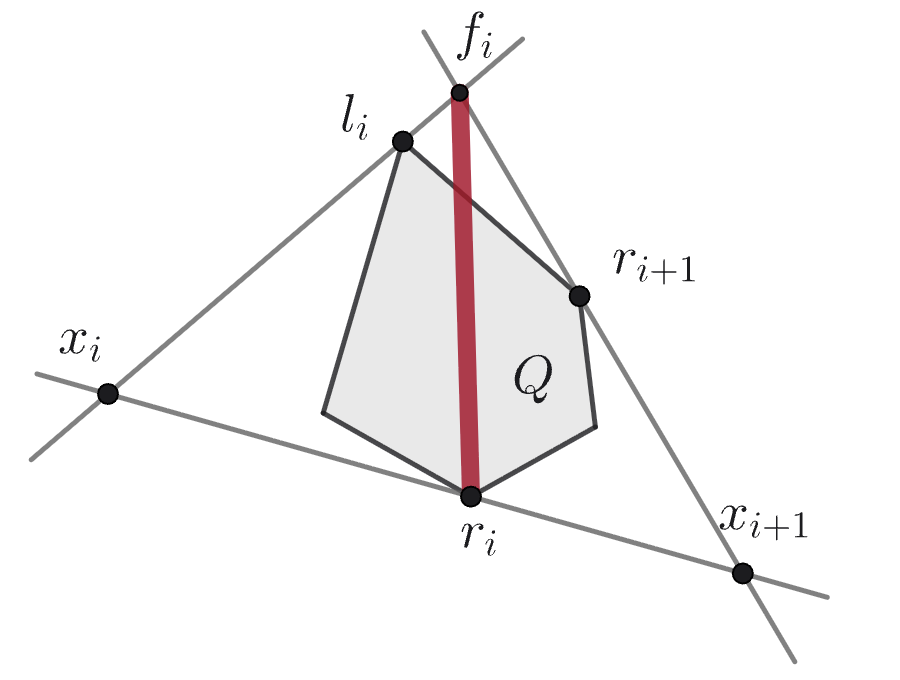}
    \caption{The virtual table is shown as a thick red line segment for a steady
    (left) and an unsteady (right) point.}
    \label{fig:virtual}
\end{figure}

By the discussion in Section \ref{sec:2-gon}, orbits around a line segment trace
out ellipses. In the polygonal case, we argue that the once-around orbit is made
up of phases of motion along nearly-circular ellipses centered close to the
origin. The transitions between these phases involve jumps from ellipse to
ellipse. Our task is to count up radial deviations from two sources:
\begin{enumerate}
    \item the eccentricity of each ellipse, and
    \item the jumps between consecutive ellipses.
\end{enumerate}

Now, let $Q$ be an $n$-gon of diameter $d$ containing the origin. Let
$V=\{v_1,\dots,v_n=v_0\}$ be the set of vertices of $Q$ with indices understood
cyclically and in counter-clockwise order, and let $\Omega$ be the set of
non-singular points with respect to $Q$. Define maps $L,R:\Omega\to V$ as
follows: through any point $x\in\Omega$ pass two support lines of $Q$; as viewed
from $x$, one of these lies to the left of $Q$. $L(x)$ is defined to be the
vertex through which this line passes. $R(x)$ is likewise the vertex of $Q$
through which the other line passes.

For the remainder of the section, let a non-singular point $x$ be fixed and let
$\OO=\OO(x)$ be the once-around orbit of $x$. Set the notation $x_i:=T^i(x)$,
$l_i:=L(x_i)$, and $r_i:=R(x_i)$. It is immediate that $r_i=l_{i+1}$ for every
$i$, but for each $i$, we have either
\begin{enumerate}
    \item $l_i=r_{i+1}$, in which case the line segment (and diagonal of $Q$)
        $l_ir_i$ is the virtual table of $x_i$, or
    \item $l_i\neq r_{i+1}$, in which case there is another point $f_i$ which
        serves as the endpoint of the virtual table.
\end{enumerate}
Let us refer to the point $x_i$ as ``steady'' if it falls into the first case
and ``unsteady'' otherwise; see Figure \ref{fig:virtual} for examples.

    \begin{lemma}
        \label{lem:obtuse}
        With notation as in Figure \ref{fig:virtual} (right), if $|x_i|\geq 5d$,
        then $\angle x_if_ix_{i+1}\geq\frac{2\pi}{3}.$
    \end{lemma}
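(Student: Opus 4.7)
My plan begins with the geometric observation that the three support lines $\ell_1, \ell_2, \ell_3$ form a triangle whose vertices are $x_i = \ell_1 \cap \ell_2$, $x_{i+1} = \ell_2 \cap \ell_3$, and $f_i = \ell_1 \cap \ell_3$, in which the auxiliary circle $C$ is inscribed as the incircle. Consequently $\angle x_i f_i x_{i+1}$ is the interior angle of this triangle at $f_i$. Letting $\alpha_i$ and $\alpha_{i+1}$ denote the interior angles at $x_i$ and $x_{i+1}$ (equivalently, the angles subtended by $Q$ and by $C$ at those vertices), the desired bound $\angle x_i f_i x_{i+1} \geq 2\pi/3$ becomes $\alpha_i + \alpha_{i+1} \leq \pi/3$.

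The bound on $\alpha_i$ is immediate: since $Q$ contains the origin and has diameter $d$, it sits inside $B(0, d)$, so $\alpha_i \leq 2\arcsin(d/|x_i|) \leq 2\arcsin(1/5)$. To control $\alpha_{i+1}$, I would invoke the standard incircle tangent-length identity. Writing $r_C$ for the inradius and $a := |x_i r_i|$, $b := |x_{i+1} r_i|$ for the tangent lengths from $x_i, x_{i+1}$ to $C$ (both landing at the common tangent point $r_i \in \ell_2$), we get $r_C = a \tan(\alpha_i/2) = b\tan(\alpha_{i+1}/2)$. Lemma~\ref{lem:uniform} bounds $|a - b| \leq 3d/2$, and the distance estimate $a \geq |x_i| - d \geq 4d$ (using $r_i \in Q \subset B(0,d)$) forces $a/b \leq 8/5$. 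Hence $\tan(\alpha_{i+1}/2) \leq (8/5)\tan(\alpha_i/2) \leq 8/(5\sqrt{24})$, and the tangent addition formula reduces the target inequality $\arcsin(1/5) + \arctan(8/(5\sqrt{24})) \leq \pi/6$ to the clean integer estimate $4563 \leq 4704$.

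The main obstacle is that this concluding estimate is genuinely tight: the margin is only a few percent, so the hypothesis $|x_i| \geq 5d$ is essentially the smallest constant for which this style of argument succeeds. A weaker hypothesis would plausibly require the sharpened version of Lemma~\ref{lem:uniform} (with constant $d$ rather than $3d/2$) alluded to in the paper, or a more careful exploitation of the virtual-ellipse structure from Section~\ref{sec:2-gon}. But as stated, only careful tracking of the arcsin/arctan values is needed.
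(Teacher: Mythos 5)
Your reduction to the two base angles of $\triangle x_i f_i x_{i+1}$ (via the angle sum) is exactly the paper's starting point, and your bound $\alpha_i\le 2\arcsin(1/5)$ on the angle at $x_i$ is fine. The problem is the step that transfers this to $\alpha_{i+1}$: the auxiliary circle $C$ is \emph{not} the incircle of $\triangle x_i f_i x_{i+1}$ but its excircle opposite $f_i$. It does touch $\ell_2$ at $r_i$, between $x_i$ and $x_{i+1}$, but it touches $\ell_1$ and $\ell_3$ on the extensions beyond $x_i$ and $x_{i+1}$, on the far side from the table. You can read this off the proof of Proposition \ref{prop:2-gon}, where $|F_1p|=|F_1x|+|xp|$ (so the tangency point on $\ell_1$ lies beyond $x$), and it is also forced by Lemma \ref{lem:radius}: the radius is of order $2/w(\theta)$, far larger than anything that fits inside the sliver triangle. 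Consequently the center of $C$ lies on the \emph{external} bisectors at $x_i$ and $x_{i+1}$, and the correct tangent-length identity is $r_C=a\cot(\alpha_i/2)=b\cot(\alpha_{i+1}/2)$, not $r_C=a\tan(\alpha_i/2)=b\tan(\alpha_{i+1}/2)$. As written, your inequality $\tan(\alpha_{i+1}/2)\le (a/b)\tan(\alpha_i/2)$ is unjustified.

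The error flips the ratio rather than destroying the argument: the corrected relation gives $\tan(\alpha_{i+1}/2)=(b/a)\tan(\alpha_i/2)$, and your own estimates ($a\ge 4d$, $|a-b|\le \tfrac32 d$) give $b/a\le 11/8<8/5$, so the same tangent-addition computation closes with more room than your $4563\le 4704$. So the proof is repairable. You should know, though, that the paper's route is shorter and symmetric: it bounds each base angle separately by the law of sines in the small triangles $l_i x_i r_i$ and $r_i x_{i+1} r_{i+1}$ (the side opposite the angle has length at most $d$, while the distance from $x_i$ or $x_{i+1}$ to $r_i$ is at least $2d$ by Lemma \ref{lem:uniform}), getting $\sin\alpha,\sin\beta\le\tfrac12$ and hence each angle at most $\pi/6$, with no reference to the auxiliary circle at all. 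The delicate near-equality you flag is an artifact of routing the second angle through the circle rather than anything intrinsic to the constant $5d$.
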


    \begin{proof}
        Let $\alpha=\angle l_ix_ir_i$ and $\beta=\angle r_ix_{i+1}r_{i+1}$.
        By Lemma \ref{lem:uniform} and the triangle inequality, we see that both
        $|x_ir_i|$ and $|r_ix_{i+1}|$ must exceed $2d$. By the definition of $d$ we
        have $|l_ir_i|\leq d$ and $|r_ir_{i+1}|\leq d$.
        Applying the Law of Sines to $\triangle l_ix_ir_i$, we have
        $$\frac{\sin\alpha}{|l_ir_i|}=\frac{\sin\angle x_il_ir_i}{|x_ir_i|}$$
        so $\sin\alpha\leq\frac{1}{2}$. Thus $\alpha\leq \pi/6$. A similar argument
        gives that $\beta\leq\pi/6$. Finally, we must have that $\angle
        x_if_ix_{i+1} =\pi-\alpha-\beta=\geq 2\pi/3.$    
    \end{proof}

    \begin{lemma}
        \label{lem:crossings}
        If $|x_i|\geq 8d$, then $x_i$ is unsteady if and only if the
        segment $x_ix_{i+2}$ intersects a side extension of $Q$.
    \end{lemma}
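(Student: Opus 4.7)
The plan is to prove the two directions separately, using the structural identification, immediate from the construction, of the polyline $x_i\to x_{i+1}\to x_{i+2}$ with the support lines $\ell_2,\ell_3$ of $x_i$'s construction: $x_ix_{i+1}\subset\ell_2$ passes through the tangency point $r_i$, and $x_{i+1}x_{i+2}\subset\ell_3$ passes through $r_{i+1}$ (because $\ell_3$ of $x_i$'s construction plays the role of $\ell_2$ for $x_{i+1}$'s step, with tangency on $\partial Q$ at $r_{i+1}$). Thus the two-step path is determined by the geometry of two support lines of $Q$ meeting at $x_{i+1}$ with polygon-vertex ``pinch points'' $r_i$ and $r_{i+1}$.

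For the forward direction, suppose $x_i$ is unsteady, so that $l_i$, $r_i$, $r_{i+1}$ are three distinct vertices of $Q$. Then along at least one of the arcs of $\partial Q$ from $r_i$ to $r_{i+1}$ there lies a full edge, and I would show that the clockwise extension of such an edge must cross the chord $x_ix_{i+2}$. Lemma~\ref{lem:obtuse} applied at the virtual-table vertex $f_i=\ell_1\cap\ell_3$ ensures that the polyline's bend at $x_{i+1}$ is near-flat (angle at $f_i$ at least $2\pi/3$ since $|x_i|\geq 8d\geq 5d$), so the chord stays close to the polyline; as the polyline rounds the two distinct vertices $r_i,r_{i+1}$, the chord is forced to cross the relevant side extension on the $f_i$ side.

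For the reverse direction, suppose $x_i$ is steady, so $r_{i+1}=l_i$ and both $\ell_1,\ell_3$ support $Q$ at the single vertex $l_i$. Then $x_i$ and $x_{i+2}$ both lie in the convex exterior wedge at $l_i$ bounded by $\ell_1\cup\ell_3$, and consequently the chord lies wholly inside this wedge. The remaining task is to rule out side extensions from other vertices. By Lemma~\ref{lem:uniform} iterated twice, $|x_{i+2}-x_i|=O(d)$, while $|x_i|\geq 8d$ places the chord at distance $\Omega(d)$ from every vertex $V\neq l_i$; a direct angular estimate then shows that no clockwise side extension emanating from such a $V$ can reach the chord. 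The side extensions at $l_i$ itself lie on $\ell_1$ and $\ell_3$ and meet the chord only at its endpoints $x_i,x_{i+2}$, not in its interior. The main obstacle is this last quantitative angular estimate, whose calibration is precisely what dictates the threshold $8d$ in the hypothesis.
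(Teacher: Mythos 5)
Your reverse direction contains the decisive gap. The step ``$|x_i|\geq 8d$ places the chord at distance at least comparable to $d$ from every vertex $V\neq l_i$; a direct angular estimate then shows that no \ldots side extension emanating from such a $V$ can reach the chord'' is a non sequitur: a side extension is an \emph{infinite ray}, so it reaches points at every distance from the vertex it emanates from. The whole chord $x_ix_{i+2}$ sits at distance roughly $|x_i|\gg d$ from \emph{all} vertices of $Q$, including $l_i$, and chords out there cross side extensions constantly --- that is the entire content of the lemma --- so no calibration of the threshold $8d$ can produce the estimate you defer to. The mechanism actually needed (and the one the paper uses) is combinatorial rather than metric: the labels $L$ and $R$ are locally constant off the $2n$ extension rays, which cut the far exterior of $Q$ into convex cells, and a short segment meets a ray exactly when its endpoints carry different labels. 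Since $r_{i+1}=l_{i+2}$, steadiness of $x_i$ is precisely $L(x_i)=L(x_{i+2})$, which (using the Lemma~\ref{lem:uniform}/\ref{lem:obtuse} bounds on the length and clockwise drift of $x_i\mapsto x_{i+2}$ to exclude double crossings) is equivalent to the chord missing the $n$ \emph{counterclockwise} extensions, the rays across which $L$ jumps. Your wedge observation at $l_i$ only controls the chord relative to the two lines $\ell_1,\ell_3$ and says nothing about the other $2n-2$ rays.

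There is a second point your framing cannot see: the remaining $n$ rays, the clockwise extensions, are where $R$ jumps, and $R(x_i)\neq R(x_{i+2})$ is equivalent to $l_{i+1}\neq r_{i+2}$, i.e.\ to $x_{i+1}$ being unsteady, not $x_i$. A steady $x_i$ with unsteady $x_{i+1}$ therefore has a chord that \emph{does} cross a (clockwise) side extension, so the implication you are trying to prove fails for exactly the family of rays you single out; one must either restrict ``side extension'' to the counterclockwise family or prove the corrected equivalence that $x_ix_{i+2}$ meets a side extension if and only if $x_i$ or $x_{i+1}$ is unsteady (which is what the count of $2n$ crossings in Corollary~\ref{cor:unsteady_count} implicitly relies on). Your forward direction is nearer the mark but is only a sketch (``I would show that\ldots''), and the arc you invoke should join $l_i$ to $r_{i+1}$ --- the two vertices that unsteadiness forces to be distinct --- not $r_i$ to $r_{i+1}$, between which a full edge exists for trivial reasons; once the cell decomposition above is in place, the polyline/near-flatness detour through Lemma~\ref{lem:obtuse} is not needed for that direction.
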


    \begin{proof}
        The assumption on $|x_i|$, together with Lemmas \ref{lem:uniform} and
        \ref{lem:obtuse} ensures that $\angle x_i O x_{i+1}$ is acute, and
        because of the orientation choice made in the construction of $T$,
        $\overrightarrow{x_ix_{i+2}}$ is negatively oriented with respect to the
        origin.

        $x_i$ is unsteady exactly when either $l_i\neq l_{i+1}$ or
        $r_i\neq r_{i+1}$. This means that $x_i$ ``sees'' a different pair of
        vertices to $x_{i+2}$, which can only happen if the two points lie on
        different sides of a side extension.
    \end{proof}

\begin{figure}
    \centering
    \includegraphics[width=0.5\textwidth]{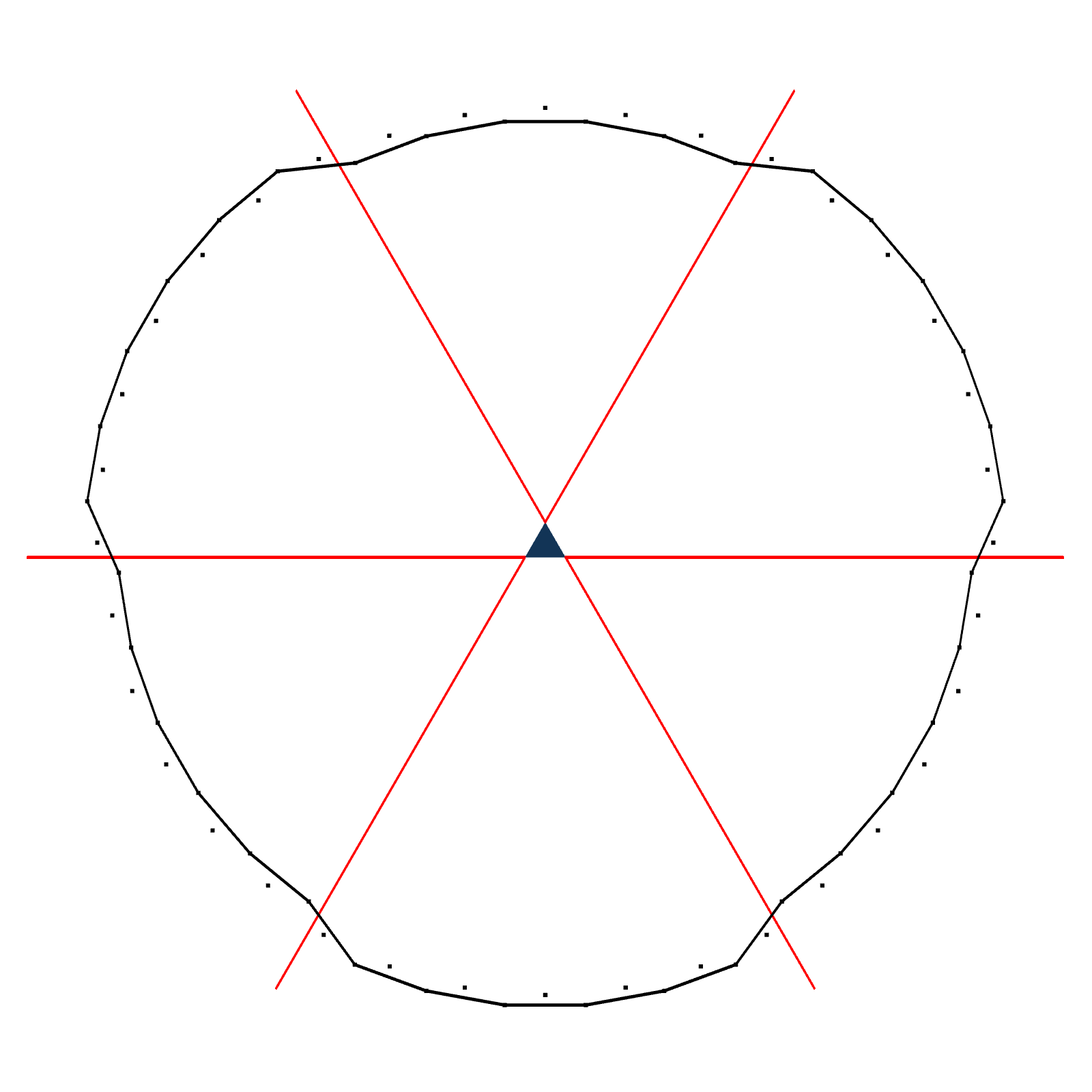}
    \caption{A once-around orbit around a triangle showing jumps across side
        extensions at unsteady points. Even points are connected by line
        segments while odd points are drawn as dots.}
        \label{fig:triangle_jump}
\end{figure}

    See Figure \ref{fig:triangle_jump} for an example.
    
    \begin{corollary}
        \label{cor:unsteady_count}
        There are at most $2n+1$ unsteady points in $\OO(x)$ and at most $2n$
        steady phases.
    \end{corollary}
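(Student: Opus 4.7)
My plan is to translate the problem into a crossings-count via Lemma~\ref{lem:crossings} and then exploit angular monotonicity of the squared map $T^2$ to bound these crossings. First, I split the once-around orbit into its even sub-chain $\sigma_e = (x_0, x_2, \ldots, x_{2N})$ and odd sub-chain $\sigma_o = (x_1, x_3, \ldots, x_{2N-1})$; each is an orbit of $T^2$. The preliminary observation recorded just before Theorem~\ref{thm:circle} --- that $T^2$ displaces distant points by a small clockwise angle --- gives strict monotonicity of the angular coordinates along $\sigma_e$ and $\sigma_o$, so each is an angularly monotonic clockwise polygonal arc winding once around $Q$.

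By Lemma~\ref{lem:crossings}, each unsteady $x_i$ produces a crossing of its two-step segment $x_ix_{i+2}$ (which lies in $\sigma_e$ or $\sigma_o$ depending on the parity of $i$) with some side extension of $Q$. Each side extension is a ray from a vertex of $Q$ with a well-defined asymptotic angular direction, so angular monotonicity forces each sub-chain to cross it at most once. With $n$ side extensions, this gives at most $n$ unsteady points of each parity, for a total of $2n$; one further unsteady point may arise at the boundary of the once-around orbit (arising from the fact that the orbit is cut off as soon as its argument passes through $0$, potentially leaving the starting and ending sectors straddling a common side extension), producing the bound $\le 2n+1$.

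For the steady-phase count, I would use that consecutive steady points share a virtual table: the condition $l_i = r_{i+1}$ yields $\{l_i, r_i\} = \{l_{i+1}, r_{i+1}\}$, so an entire maximal steady run sits on a single ellipse determined by a diagonal of $Q$. The complement of $Q$ is partitioned by the side extensions into chambers of locally constant $(L,R)$; angular monotonicity forbids revisits, so each chamber is entered in a single contiguous subinterval of the orbit and contributes at most one steady phase. Enumerating the chambers yields the bound of $2n$.

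The main obstacle is quantifying the angular monotonicity of $\sigma_e$ and $\sigma_o$, since the cited observation is only qualitative. A rigorous argument must combine Lemma~\ref{lem:uniform} (the diameter bound $||xp|-|py|| \leq \tfrac{3}{2}d$ on the tangent-length discrepancy at each step) with Lemma~\ref{lem:obtuse} (the obtuse-angle estimate at unsteady vertices) to show that the clockwise angular shift per $T^2$-step strictly exceeds any possible backward wobble, so that the ordering of arguments $\arg(x_{2i})$ is genuinely monotone and each side extension really is crossed at most once.
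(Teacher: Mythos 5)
Your overall strategy is the paper's: identify unsteady points with crossings of side extensions via Lemma~\ref{lem:crossings}, then use the monotone clockwise progression of the once-around orbit to bound the number of crossings, with one possible extra crossing at the wrap-around. The steady-phase count and your closing caveat about making the angular monotonicity quantitative are also consistent with the paper, which leans on the same qualitative observation without further justification, so neither of those is a gap relative to the source.

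The concrete problem is your count of side extensions. You assert there are $n$ of them; the paper's proof states that there are exactly $2n$: each of the $n$ sides contributes two rays, one beyond each endpoint, and the pair of seen vertices changes across rays of both families. Once this is corrected, your accounting breaks: each of your two parity sub-chains $\sigma_e$, $\sigma_o$ winds once around $Q$ and hence crosses each of the $2n$ rays at most once (plus the boundary effect), so your argument delivers up to roughly $4n$ crossings and, via the literal statement of Lemma~\ref{lem:crossings}, up to roughly $4n$ unsteady points --- twice the claimed bound. What is missing is the observation that reconciles the two counts: a single unsteady point $x_j$ forces the unordered pair of seen vertices to differ both between $x_{j-1}$ and $x_{j+1}$ and between $x_j$ and $x_{j+2}$, so it is witnessed by a crossing in \emph{both} sub-chains; crossings and unsteady points are therefore in two-to-one correspondence, and a total of at most $2(2n+1)$ crossings yields at most $2n+1$ unsteady points. (Equivalently, one can follow the paper and treat the squared-map orbit as a single loop crossing each of the $2n$ rays once, never introducing the second sub-chain.) As written, the step ``with $n$ side extensions, this gives at most $n$ unsteady points of each parity'' lands on the right number only because the undercount of rays exactly cancels the double count coming from the two sub-chains, and so is not justified.
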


    \begin{proof}
        An unsteady point occurs exactly when the line segment $x_ix_{i+2}$
        intersects a side extension of $Q$. There are exactly $2n$ of these, and
        since the orbit of the square map proceeds in a clockwise direction
        around the table, it crosses each of these once, except perhaps for the
        first one to appear clockwise from the point $x$ if $x_{2(N-1)}x_{2N}$
        intersects it.
    \end{proof}

By the preceding lemma, we have that the once-around orbit is made up of several steady
phases (in which the orbit proceeds as the usual inner billiard around a
nearly-circular ellipse), punctuated by jumps from ellipse to ellipse at a bounded number of unsteady points.

Thus, we must account for radial deviation from two sources: the eccentricity of the ellipses and the misalignment of the ellipses.

\begin{figure}
    \centering
    \includegraphics[width=0.4\textwidth]{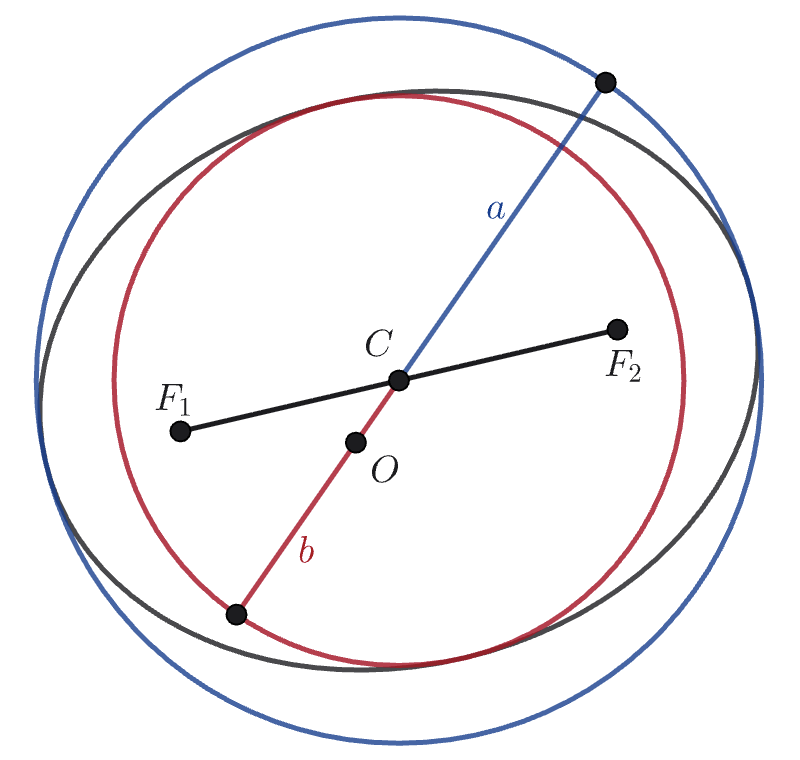}
    \caption{The configuration considered in Lemma \ref{lem:eccentricity}.}
    \label{fig:eccentricity_lemma}
\end{figure}

\begin{lemma}
    \label{lem:eccentricity}
    Let $E$ be an ellipse with foci $F_1$ and $F_2$ and center $C$ such that
    $\abs{F_1F_2}\leq 2d$ and $\abs{C}\leq 2d$ (see Figure \ref{fig:foci}). If
    $m=\min_{x\in E}\abs{x}$ and
    $M=\max_{x\in E}\abs{x}$, then $M-m\leq 5d$.
\end{lemma}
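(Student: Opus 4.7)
The plan is to use the focal-sum property of the ellipse to show that every point on $E$ is nearly equidistant from the origin, with the deviation controlled by the distances from the origin to the two foci. All the real work is then to bound $|F_1|+|F_2|$ sharply enough from the two hypotheses.

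The first step is routine. Let $a$ be the semi-major axis, so that $|PF_1|+|PF_2| = 2a$ for every $P\in E$. Applying the triangle inequality in the form $\bigl||P|-|PF_i|\bigr|\leq |F_i|$ for $i=1,2$ and summing gives
\begin{equation*}
    \bigl|\,2|P| - 2a\,\bigr| \leq |F_1|+|F_2|,
\end{equation*}
so every $|P|$ lies in an interval of length $|F_1|+|F_2|$ centered at $a$. In particular, $M - m \leq |F_1|+|F_2|$. There is no difficulty in this step.

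The main obstacle is that the naive bound $|F_i|\leq |C|+c$ (writing $c := |F_1F_2|/2\leq d$) only yields $|F_1|+|F_2|\leq 2|C|+2c\leq 6d$, which is too weak. To tighten this by the needed factor I would write $F_1 = C+u$ and $F_2 = C-u$ with $|u| = c$, invoke the parallelogram identity
\begin{equation*}
    |F_1|^2 + |F_2|^2 = 2\bigl(|C|^2 + c^2\bigr),
\end{equation*}
and then apply QM--AM to pass from the $L^2$ bound to an $L^1$ bound: $|F_1|+|F_2|\leq \sqrt{2(|F_1|^2+|F_2|^2)} = 2\sqrt{|C|^2+c^2}$. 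Substituting the hypotheses $|C|\leq 2d$ and $c\leq d$ gives
\begin{equation*}
    M - m \leq |F_1|+|F_2| \leq 2\sqrt{4d^2+d^2} = 2\sqrt{5}\,d < 5d,
\end{equation*}
which completes the proof. The only conceptual point is recognizing that the triangle-inequality bound on $|F_i|$ is wasteful precisely when $u$ is transverse to $C$, and that the parallelogram identity is the right device to recover the factor of $\sqrt{2}$ separating $6d$ from the sharp value $2\sqrt{5}\,d$.
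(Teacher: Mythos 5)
Your proof is correct, and it takes a genuinely different route from the paper's. The paper works with the semi-axes: it first derives $a-b\leq d$ from $\abs{F_1F_2}\leq 2d$ via the triangle inequality, then bounds $M\leq \abs{C}+a$ and $m\geq b-\abs{C}$, giving $M-m\leq 2\abs{C}+(a-b)\leq 5d$. That decomposition cleanly separates the ``center offset'' contribution ($2\abs{C}\leq 4d$) from the ``eccentricity'' contribution ($a-b\leq d$), which matches the narrative of the surrounding section. You instead never touch the semi-minor axis: you sandwich every $\abs{P}$ in an interval of length $\abs{F_1}+\abs{F_2}$ about $a$ using the focal-sum definition, and then the parallelogram identity plus QM--AM gives $\abs{F_1}+\abs{F_2}\leq 2\sqrt{\abs{C}^2+c^2}\leq 2\sqrt{5}\,d$. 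Your diagnosis of where the naive bound loses (it pays for the focal separation twice, once per focus, landing at $6d$) is accurate, and the $L^2$ detour is exactly the right fix. The payoff is a slightly sharper constant, $2\sqrt5\,d<5d$; the cost is that the argument is a bit less transparent about which geometric feature (offset vs.\ eccentricity) contributes what. Either version suffices for the application in Theorem \ref{thm:circle}, where only the existence of some constant multiple of $d$ matters.
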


\begin{proof}
    Let $a$ and $b$ denote the semi-major and semi-minor axes of $E$,
    respectively. By the assumption that $\abs{F_1F_2}\leq 2d$ and the triangle
    inequality, we have $a - b\leq d$. Now, find that $M\leq 2d + a$ while
    $m\geq b - 2d$. Combining, we have
    \begin{align*}
        M-m &\leq 2d + a - (b - 2d) \\
            &= 4d + (a - b) \\
            &\leq 5d.
    \end{align*}
\end{proof}

\begin{figure}
    \centering
    \includegraphics[width=0.4\textwidth]{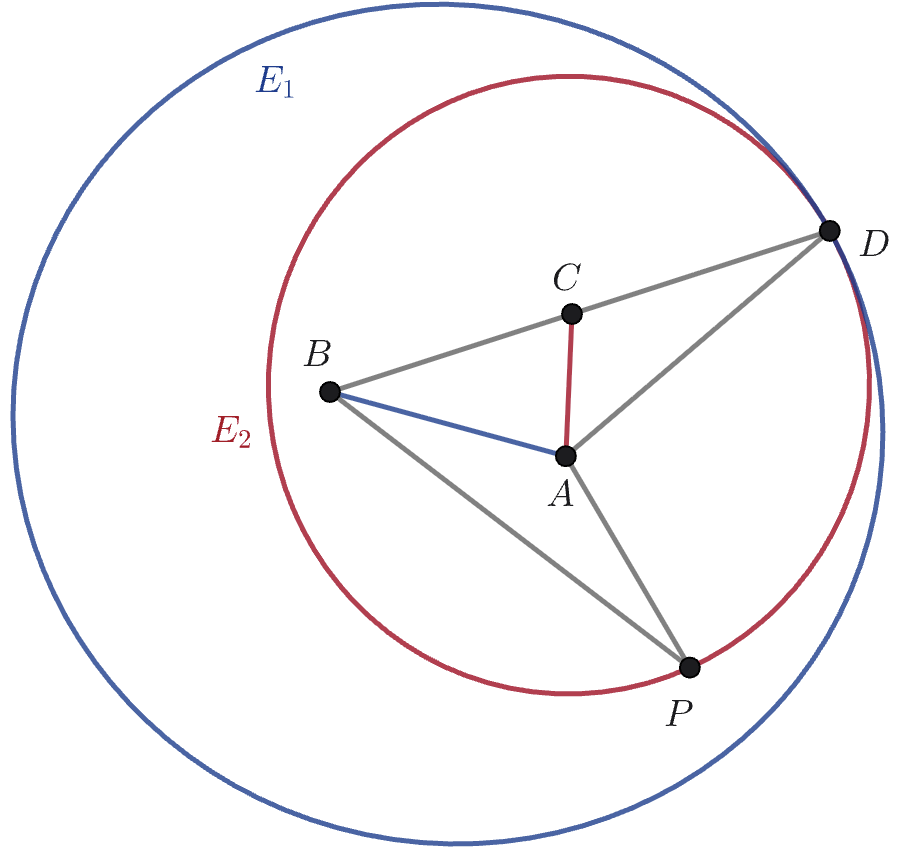}
    \caption{The configuration in Lemma \ref{lem:foci} and its proof.}
    \label{fig:foci}
\end{figure}

\begin{lemma}
    \label{lem:foci}
    Fix points $A$, $B$, $C$, and $D$ in the plane such that $B$, $C$, and $D$
    are collinear in that order. Let $E_1$ be the ellipse with foci $A$ and $B$
    passing through $D$ and let $E_2$ be the ellipse with foci $A$ and $C$
    passing through $D$ (see Figure \ref{fig:foci}). Then
    \begin{enumerate}
        \item $E_1$ and $E_2$ are tangent at $D$,
        \item $E_1$ contains $E_2$.
    \end{enumerate}
\end{lemma}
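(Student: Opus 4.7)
The plan is to reduce both claims to the focal sum-of-distances definition of an ellipse, together with (for (1)) its reflection property and (for (2)) the triangle inequality.

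For (1), I would invoke the standard optical property of conics: the tangent to an ellipse at a point bisects the external angle formed there by the two focal rays. The key observation is that, because $B$, $C$, $D$ are collinear in that order, $C$ lies between $B$ and $D$, so the rays $\overrightarrow{DB}$ and $\overrightarrow{DC}$ are literally the same ray emanating from $D$. Consequently, at $D$ both ellipses present the same pair of focal rays---namely $\overrightarrow{DA}$ together with the common ray through $C$ and $B$---so the same external angle is bisected, and the two tangent lines coincide.

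For (2), fix $P\in E_2$, so by definition $\abs{PA}+\abs{PC}=\abs{DA}+\abs{DC}$. Using that $\abs{DB}=\abs{DC}+\abs{CB}$ (since $C$ lies between $B$ and $D$) and the triangle inequality $\abs{PB}\leq\abs{PC}+\abs{CB}$ applied in $\triangle PBC$, adding $\abs{PA}$ to both sides yields
\[
\abs{PA}+\abs{PB}\;\leq\;\abs{PA}+\abs{PC}+\abs{CB}\;=\;\abs{DA}+\abs{DC}+\abs{CB}\;=\;\abs{DA}+\abs{DB},
\]
which is exactly the statement that $P$ lies inside or on $E_1$.

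There is no real obstacle here; the only point worth verifying is that the equality case $\abs{PB}=\abs{PC}+\abs{CB}$, which requires $C$ to lie on segment $PB$, combined with $P\in E_2$, singles out $P=D$, so that $E_2$ meets $E_1$ only at $D$ rather than crossing it. This follows because the line through $B$, $C$, $D$ meets $E_2$ in just two points, and only $D$ lies on the far side of $C$ from $B$; this tangential (rather than transverse) contact is what is needed to match up with part (1).
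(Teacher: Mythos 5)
Your proof is correct and takes essentially the same route as the paper's: part (1) via the focal reflection property at $D$ (the paper phrases this as the transfocal chords being in inner-billiards correspondence, which coincide because $\overrightarrow{DB}$ and $\overrightarrow{DC}$ are the same ray), and part (2) via the identical triangle-inequality computation $\abs{PB}\leq\abs{PC}+\abs{CB}$. Your closing remark about the equality case is a harmless extra; the containment statement itself does not require it.
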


\begin{proof}
    The first statement follows from the fact that the transfocal chords
    incident to $D$ in both ellipses must be in inner billiards correspondence.
    These chords coincide, meaning that the tangent lines to the two ellipses at
    $D$ must also coincide.

    For the second statement, let $P$ be a point on $E_2$. We want to show that it
    lies inside $E_1$, i.e., that $|AP|+|BP|\leq|AD|+|BD|$. By the triangle
    inequality,
    \begin{align*}
        |BP|&\leq |CP|+|BC|\\
        |AP|+|BP|&\leq|AP|+|CP|+|BC|\\
                 &=|AD|+|CD|+|BC|\\
                 &=|AD|+|BD|,
    \end{align*}
    where the first equality follows from the string construction definition of
    $E_2$ and the second from the fact that $B$, $C$, and $D$ are collinear in
    that order.
\end{proof}

\begin{lemma}
    \label{lem:confocal}
    Let $E_1$ and $E_2$ be confocal ellipses with semi-major axes $a_1$ and
    $a_2$, respectively. If both ellipses have eccentricity at most
    $\frac{1}{2}$, then the Hausdorff distance satisfies
    $d_H(E_1,E_2)\leq2|a_2-a_1|$.
\end{lemma}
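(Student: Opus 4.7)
The plan is to parametrize both ellipses in a common coordinate system, bound the radial deviation in the minor-axis direction in terms of the given bound on major axes, and then use the fact that the Hausdorff distance between two parametrized curves is at most the supremum of distances between points at matching parameter values.

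First I would set up coordinates so that the common foci lie on the $x$-axis symmetrically about the origin. Write $E_i=\{(a_i\cos t,\,b_i\sin t):t\in[0,2\pi)\}$, where $b_i=\sqrt{a_i^2-c^2}$ and $c$ is the common linear eccentricity. Without loss of generality assume $a_1\leq a_2$; then automatically $b_1\leq b_2$ and, from $b_i^2=a_i^2-c^2$, one gets
\begin{equation*}
(b_2-b_1)(b_2+b_1)=(a_2-a_1)(a_2+a_1).
\end{equation*}

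The key estimate is to convert the eccentricity hypothesis $c/a_i\leq 1/2$ into a Lipschitz bound relating $b_2-b_1$ to $a_2-a_1$. From $e_i\leq 1/2$ we get $b_i\geq\tfrac{\sqrt{3}}{2}a_i$, hence $b_2+b_1\geq\tfrac{\sqrt{3}}{2}(a_2+a_1)$, so
\begin{equation*}
b_2-b_1\leq\tfrac{2}{\sqrt{3}}(a_2-a_1)<2(a_2-a_1).
\end{equation*}

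Now I would compare the curves pointwise in the parameter $t$:
\begin{equation*}
|E_2(t)-E_1(t)|^2=(a_2-a_1)^2\cos^2 t+(b_2-b_1)^2\sin^2 t\leq\tfrac{4}{3}(a_2-a_1)^2,
\end{equation*}
where the bound uses the preceding step. This gives $|E_2(t)-E_1(t)|\leq\tfrac{2}{\sqrt{3}}|a_2-a_1|\leq 2|a_2-a_1|$ for every $t$.

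To finish, I would invoke the general fact that the Hausdorff distance between two curves $\gamma_1,\gamma_2:[0,2\pi)\to\mathbb{R}^2$ is at most $\sup_t|\gamma_1(t)-\gamma_2(t)|$: for any $x=E_1(t)$, the infimum $\inf_{y\in E_2}|x-y|$ is bounded above by $|E_1(t)-E_2(t)|$, and symmetrically for points on $E_2$. Combining this with the pointwise estimate yields $d_H(E_1,E_2)\leq 2|a_2-a_1|$. There is no real obstacle here; the only substantive input is the eccentricity hypothesis, which is exactly what allows the minor-axis difference to remain comparable to the major-axis difference, and the bound $2$ in the conclusion is deliberately loose (the sharp constant from this argument would be $2/\sqrt{3}$).
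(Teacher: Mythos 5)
Your proof is correct, and it takes a genuinely different route from the paper's. The paper views the two ellipses as level sets of $f(P)=|PF_-|+|PF_+|$, argues that the separation between level sets is largest along the minor axis where $\norm{\grad f}$ is smallest (so $d_H(E_1,E_2)\leq b_2-b_1$), and then bounds $b_2-b_1\leq 2(a_2-a_1)$ by differentiating $b=\sqrt{a^2-f^2}$ in $a$ and applying the racetrack principle, using the eccentricity hypothesis to control $a/b$. You instead use the trigonometric parametrization $(a_i\cos t,b_i\sin t)$, bound the pointwise distance at matching parameters by $\max(a_2-a_1,\,b_2-b_1)$, and control $b_2-b_1$ algebraically via $(b_2-b_1)(b_2+b_1)=(a_2-a_1)(a_2+a_1)$ together with $b_i\geq\tfrac{\sqrt3}{2}a_i$; the standard fact that Hausdorff distance is dominated by the supremum over a common parametrization finishes the job. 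The two arguments are comparable in length and both land on the same effective constant $2/\sqrt{3}$; yours is somewhat more elementary and self-contained, since it avoids the slightly informal ``distance between level sets is maximized along the path of smallest gradient'' step, while the paper's version has the advantage of identifying the geometrically sharper intermediate bound $d_H(E_1,E_2)\leq b_2-b_1$ (attained at the endpoints of the minor axes). Either is perfectly adequate for the application in Lemma \ref{lem:unsteady}.
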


\begin{figure}
    \centering
    \includegraphics[width=0.3\textwidth]{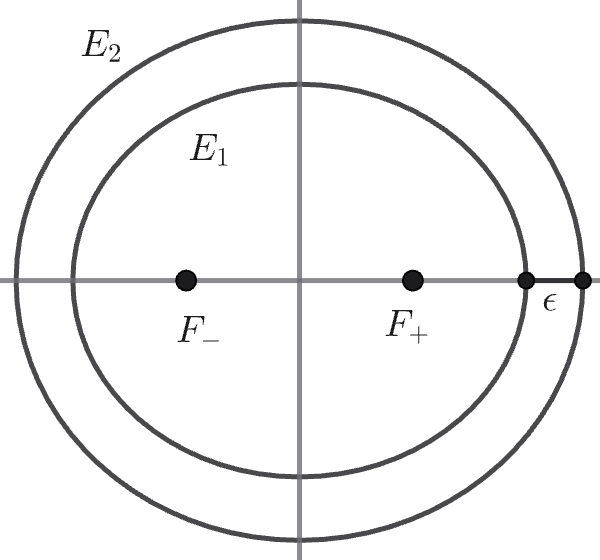}
    \caption{Confocal ellipses as in Lemma \ref{lem:confocal}.}
    \label{fig:confocal}
\end{figure}

\begin{proof}
    Fix the shared foci at $F_\pm=(\pm f,0)$ and, without loss of generality,
    say $a_2>a_1\geq2f$ so that the eccentricity assumption is satisfied (see
    Figure \ref{fig:confocal}). The
    ellipses' semi-minor axes are $b_i=\sqrt{a_i^2-f^2}$.

    The ellipse $E_i$ can be thought of as the $2a_i$ level set of the function
    $f(P)=|PF_-|+|PF_+|$. $\grad f_P$ is sum of the unit vectors in the
    directions of $\overrightarrow{F_-P}$ and $\overrightarrow{F_+P}$, so
    $\norm{\grad f}$ is minimized when the angle between these vectors is
    maximized. This occurs, for any given level set, along the $y$-axis. The
    gradient field is orthogonal to the foliation by level sets, so distance
    between two level sets is maximized by the path of smallest gradient
    vectors. Therefore, we can bound $d_H(E_1,E_2)$ by their separation along
    the $y$-axis, which is $b_2-b_1$.

    Set $\epsilon=a_2-a_1$, and let
    $g(\epsilon)=b_2-b_1=\sqrt{(a_1+\epsilon)^2-f^2}-\sqrt{a_1^2-f^2}$.
    Clearly $g(0)=0$ and
    $$g'(\epsilon)=\frac{a_1+\epsilon}{\sqrt{(a_1+\epsilon)^2-f^2}}
    =\frac{a_2}{b_2}\leq2,$$
    where the inequality follows by the assumption on eccentricity. Finally, we
    see by the racetrack principle that $d_H(E_1,E_2)\leq g(\epsilon)\leq
    2\epsilon=2|a_2-a_1|$, as desired.
\end{proof}

\begin{figure}
    \centering
    \includegraphics[width=0.6\textwidth]{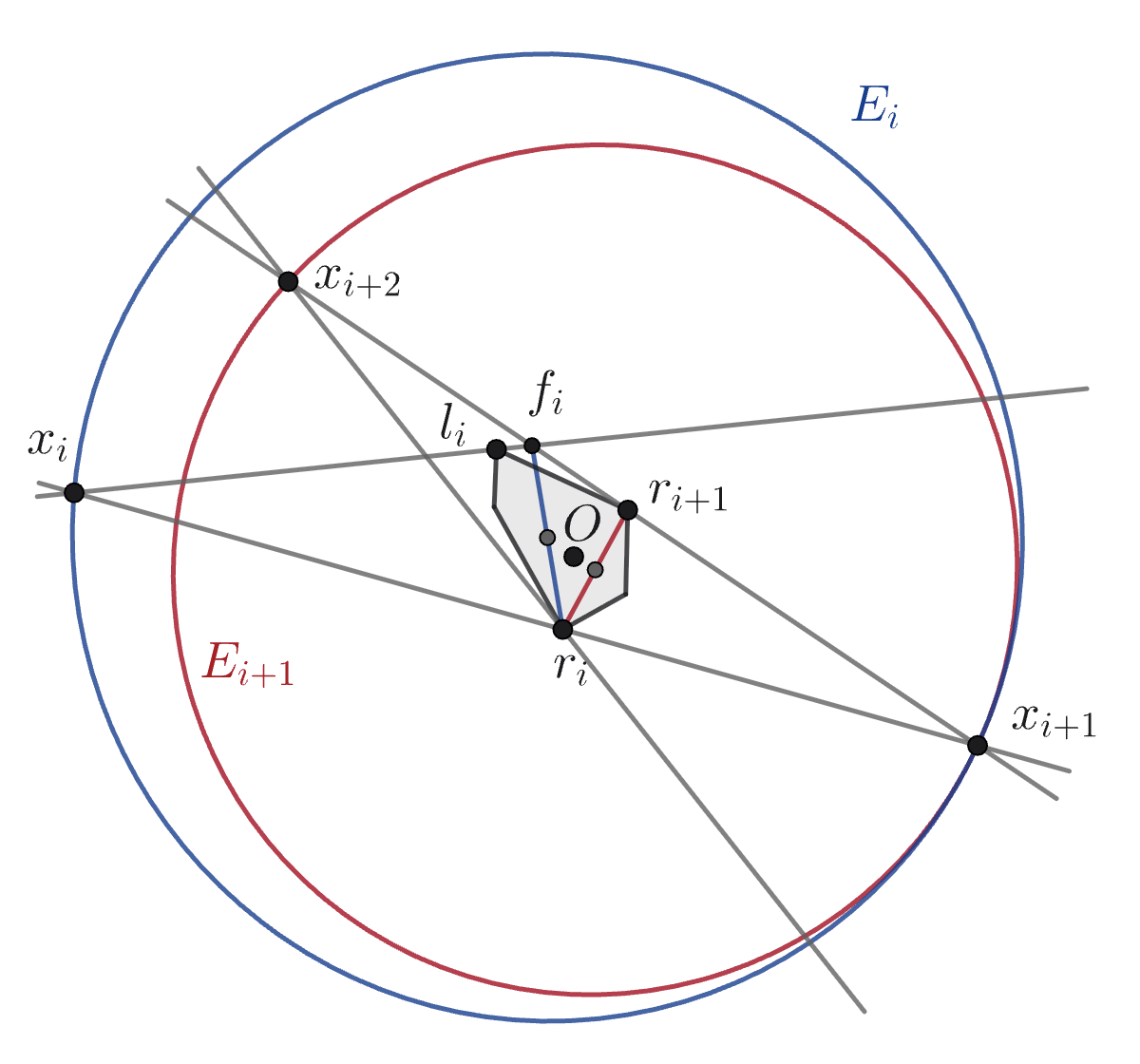}
    \caption{A jump from one ellipse to another.}
    \label{fig:jump}
\end{figure}

\begin{lemma}
    \label{lem:unsteady}
    Fix notation as shown in Figure \ref{fig:jump}. If $|x_i|\geq 6d$, the
    Hausdorff distance between $E_i$ and $E_{i+1}$ is bounded above by
    $2|l_ir_{i+1}|$.
\end{lemma}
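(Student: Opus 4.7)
The plan is to sandwich $E_i$ and $E_{i+1}$ inside a confocal pair and apply Lemma \ref{lem:confocal}. Writing out the virtual-table construction of Proposition \ref{prop:2-gon}, $E_i$ has foci at $r_i$ and at $f_i := \ell_1(x_i) \cap \ell_3(x_i)$, the third vertex of the triangle formed by the three support lines in the construction of $T(x_i)$. Analogously $E_{i+1}$ has foci $r_{i+1}, f_{i+1}$. I treat the case in which $x_{i+1}$ is steady (so $f_{i+1} = l_{i+1} = r_i$); the other case is symmetric and yields the same bound. With this, $E_i$ and $E_{i+1}$ share the focus $r_i$, both pass through $x_{i+1}$, and their other foci $f_i, r_{i+1}$ both lie on $\ell_3(x_i) = \ell_2(x_{i+1})$. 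Since the circumscribing triangle has $l_i$ on its side $x_i f_i$ and $r_{i+1}$ on its side $x_{i+1} f_i$ (each being the corresponding tangent point of $Q$), the points $f_i, r_{i+1}, x_{i+1}$ lie in that order on $\ell_3(x_i)$, and $l_i$ lies between $f_i$ and $x_i$ on $\ell_1(x_i)$.

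Next, Lemma \ref{lem:foci} applied with $A = r_i$, $B = f_i$, $C = r_{i+1}$, $D = x_{i+1}$ gives $E_{i+1} \subset E_i$, tangent at $x_{i+1}$. Let $E^*$ be the smallest ellipse with foci $r_i, r_{i+1}$ containing $E_i$, and write $a^*$ for its semi-major axis. The triangle inequality $|P r_{i+1}| \le |P f_i| + |f_i r_{i+1}|$, applied at each $P \in E_i$, gives $a^* \le a_i + \frac{1}{2}|f_i r_{i+1}|$. Evaluating the focal-sum identities at $x_{i+1}$ and using the collinearity above gives $a_i - a_{i+1} = \frac{1}{2}|f_i r_{i+1}|$, so $a^* - a_{i+1} \le |f_i r_{i+1}|$. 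The hypothesis $|x_i| \ge 6d$ makes the focal distance $|r_i r_{i+1}| \le d$ small compared with $a^*, a_{i+1}$ (both of order $|x_i|$), so both $E^*$ and $E_{i+1}$ have eccentricity below $\frac{1}{2}$. Lemma \ref{lem:confocal} then gives $d_H(E^*, E_{i+1}) \le 2|f_i r_{i+1}|$, and the nesting $E_{i+1} \subset E_i \subset E^*$ transfers this bound to $d_H(E_i, E_{i+1}) \le 2|f_i r_{i+1}|$.

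Finally, I upgrade $|f_i r_{i+1}|$ to $|l_i r_{i+1}|$. Consider the triangle $l_i f_i r_{i+1}$. Since $l_i$ lies between $f_i$ and $x_i$, and $r_{i+1}$ lies between $f_i$ and $x_{i+1}$, the angle at $f_i$ coincides with $\angle x_i f_i x_{i+1}$, which by Lemma \ref{lem:obtuse} (applicable as $6d > 5d$) is at least $2\pi/3$. In a triangle the side opposite the largest (obtuse) angle is the longest, so $|f_i r_{i+1}| \le |l_i r_{i+1}|$, completing the bound. The main obstacle I expect will be pinning down the identification of the foci of $E_i$ and $E_{i+1}$ and the precise ordering of the six relevant points along $\ell_1(x_i)$ and $\ell_3(x_i)$; once those configuration facts are in place, the rest is a short application of Lemmas \ref{lem:foci}, \ref{lem:confocal}, and \ref{lem:obtuse}.
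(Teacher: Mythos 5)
Your argument is correct and follows essentially the same route as the paper's: identify the foci of $E_i$ and $E_{i+1}$ from the virtual tables, nest the two ellipses with Lemma \ref{lem:foci}, compare along a confocal family via Lemma \ref{lem:confocal} using the focal-sum identity $a_i-a_{i+1}=\frac{1}{2}|f_ir_{i+1}|$, and convert $|f_ir_{i+1}|$ to $|l_ir_{i+1}|$ with the obtuse angle from Lemma \ref{lem:obtuse}. The only (immaterial) difference is the direction of the sandwich: the paper applies Lemma \ref{lem:foci} a second time at $x_{i+2}$ to shrink an ellipse confocal with $E_i$ inside $E_{i+1}$, while you inflate an ellipse confocal with $E_{i+1}$ around $E_i$ via a triangle inequality; both yield the same bound, and your explicit identification $A=r_i$, $B=f_i$, $C=r_{i+1}$, $D=x_{i+1}$ in the first nesting step is in fact the correct reading of the configuration.
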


\begin{figure}
    \centering
    \includegraphics[width=0.6\textwidth]{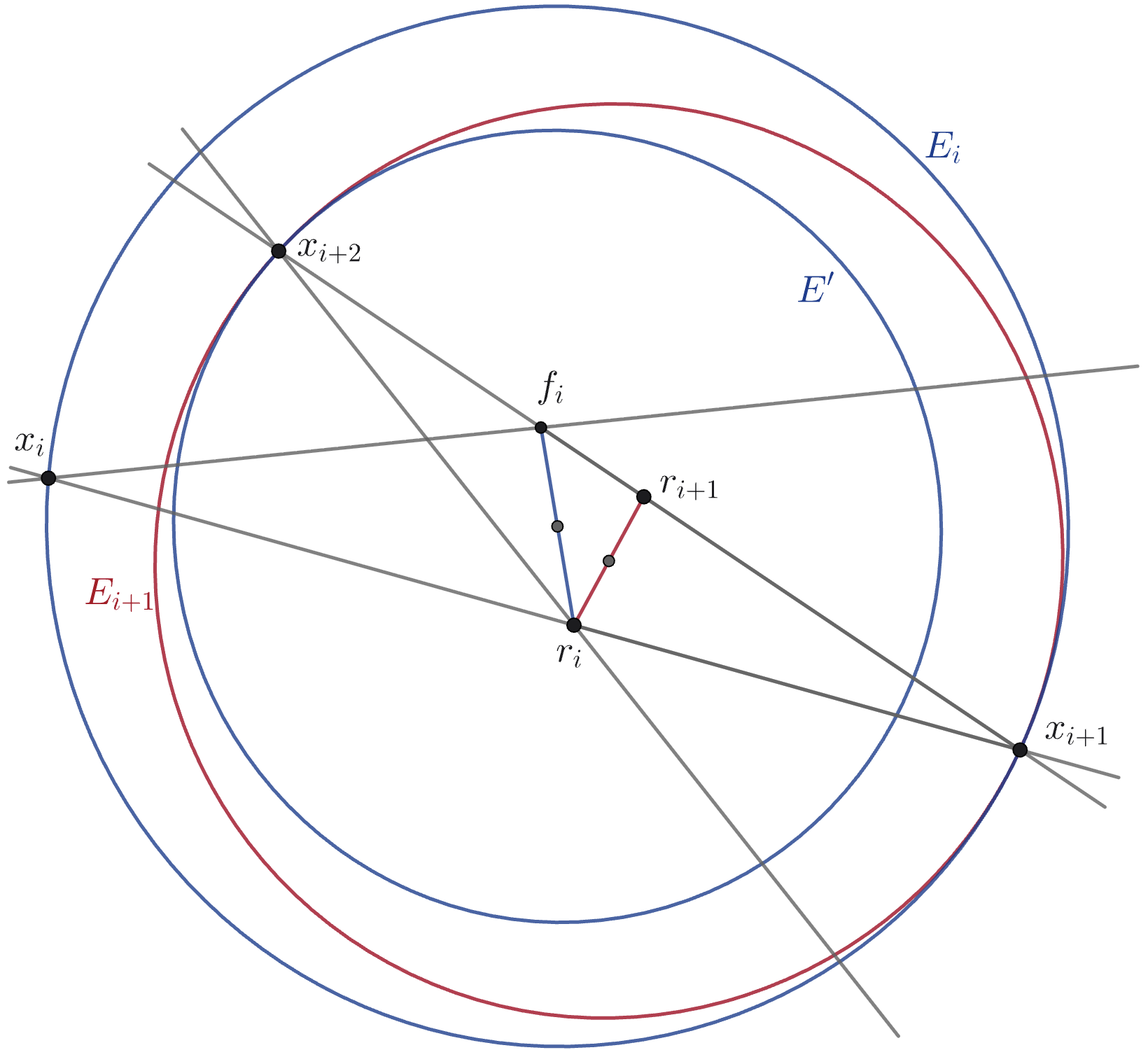}
    \caption{The configuration used in the proof of Lemma
    \ref{lem:unsteady}.}
    \label{fig:hausdorff_proof}
\end{figure}

\begin{proof}
    Notice that, by Lemma \ref{lem:obtuse}, $\angle x_if_ix_{i+1}$
    is obtuse and therefore 
    \begin{align}
        |f_ir_{i+1}|&\leq|l_ir_{i+1}|\\ \label{eq:focal_move}
                    &\leq d.
    \end{align}
    This, together
    with the inequality $|r_ir_{i+1}|\leq d$ and the triangle inequality imply
    $|r_if_i|\leq 2d$.

    By Lemma \ref{lem:foci} with $A=r_i$, $B=r_{i+1}$, $C=f_i$,
    and $D=x_{i+2}$, we must have that $E_i$ contains $E_{i+1}$.
    Applying the same lemma again, now with $A=r_i$, $B=r_{i+1}$, $C=f_i$,
    and $D=x_{i+2}$, we find that $E_{i+1}$ contains the ellipse $E'$ defined by
    $|f_iP|+|r_iP|=|f_ix_{i+2}|+|r_ix_{i+2}|$. Since $E_{i+1}$ is itself contained in $E_i$,
    we must have that
    \begin{equation}
        \label{eq:dh}
        d_H(E_i,E_{i+1})\leq d_H(E_i,E'),
    \end{equation}
    where $d_H$ is the
    Hausdorff distance.

    The intrafocal lengths of both ellipses are bounded above by $2d$, and
    by two applications of Lemma \ref{lem:uniform}, 
    $|r_ix_{i+2}|\geq 6d-\frac{3}{2}d-\frac{3}{2}d=3d$. By the triangle
    inequality, $|c_ix_i+2|\geq2d$, and we have that both $E_i$ and $E'$ have
    eccentricity at most $\frac{1}{2}$.

    Recall that $P$ lies on $E'$ exactly if
    $$|f_iP|+|r_iP|=|f_ix_{i+2}|+|r_ix_{i+2}|=:2a'$$
    while $E_i$ is defined by
    $$|f_iP|+|r_iP|=|f_ix_{i+1}|+|r_ix_{i+1}|=:2a.$$
    Using the collinearity of the points $x_{i+1}$, $r_{i+1}$, $f_i$, and
    $x_{i+2}$, we see that
    \begin{align*}
        2a&=|f_ix_{i+1}|+|r_ix_{i+1}|\\
          &=|f_ir_{i+1}|+|r_{i+1}x_{i+1}|+|r_ix_{i+1}|\\
          &=|f_ir_{i+1}|+|r_{i+1}x_{i+2}|+|r_ix_{i+2}|\\
          &=2|f_ir_{i+1}|+|f_ix_{i+2}|+|r_ix_{i+2}|\\
          &=2a'+2|f_ir_{i+1}|,
    \end{align*}
    where the third equality follows from the fact that $x_{i+2}$ lies on
    $E_{i+1}$. This computation shows $a-a'=|f_ir_{i+1}|$, so by Lemma
    \ref{lem:confocal}, we have $d_H(E_i,E')\leq 2|f_ir_{i+1}|$. After applying
    (\ref{eq:dh}) and then (\ref{eq:focal_move}), the proof is complete.
\end{proof}

\begin{proof}[Proof of Theorem \ref{thm:circle}]
    Let $d$ be the diameter of $Q$.
    Under iteration by the square of the outer length billiard
    map, the orbit progresses along steady phases punctuated by unsteady points.
    We bound the maximum possible radial deviation by adding together two types
    of deviation: that from the eccentricity of the ellipses traversed during
    the steady phases (which we term ``eccentricity error''), and that which
    accrues during the jumps from ellipse to ellipse at the unsteady points
    (which we call ``jump error'').

    Suppose that $|x_i|\geq 8d$ for all $x_i\in\OO(x)$, so that by Lemma
    \ref{lem:obtuse}, all virtual tables have length at most $2d$.
    Since $Q$ contains the origin, Lemma \ref{lem:eccentricity} applies to the
    first ellipse.
     
    Lemma \ref{lem:unsteady} relates each jump to a side length or diagonal
    of $Q$. The once-around orbit sees the points $l_i$ move around $Q$, but not
    more than twice, so we can add up all jump errors to find that the final
    ellipse has Hausdorff distance not more than 
    $$
    \sum_{i~\text{ unst.}}d_H(E_i,E_{i+1})
    \leq2\sum_{i~\text{unst.}}|r_{i+1}l_i|\leq 4p$$
    from the first, where $p$ is the perimeter of $Q$.

    Therefore, all points in $\OO$ lie within $4p$ units of an ellipse which
    satisfies the assumptions of Lemma \ref{lem:eccentricity}. This gives that
    $$C_1=4p+3d\leq (4\pi+3)d.$$
    The assumption that all points of $\OO$ are at least $8d$ from the origin
    will thus be met if the starting point $x$ satisfies $|x|\geq C_2 :=
    (4\pi+11)d$.
\end{proof}

\begin{remark}
    This argument can with high likelihood be extended to cover all convex
    bodies. We hope to prove this fact in a future work.
\end{remark}

\section{Circle centers}
\label{sec:centers}

\begin{figure}
    \centering
    \includegraphics[width=0.5\textwidth]{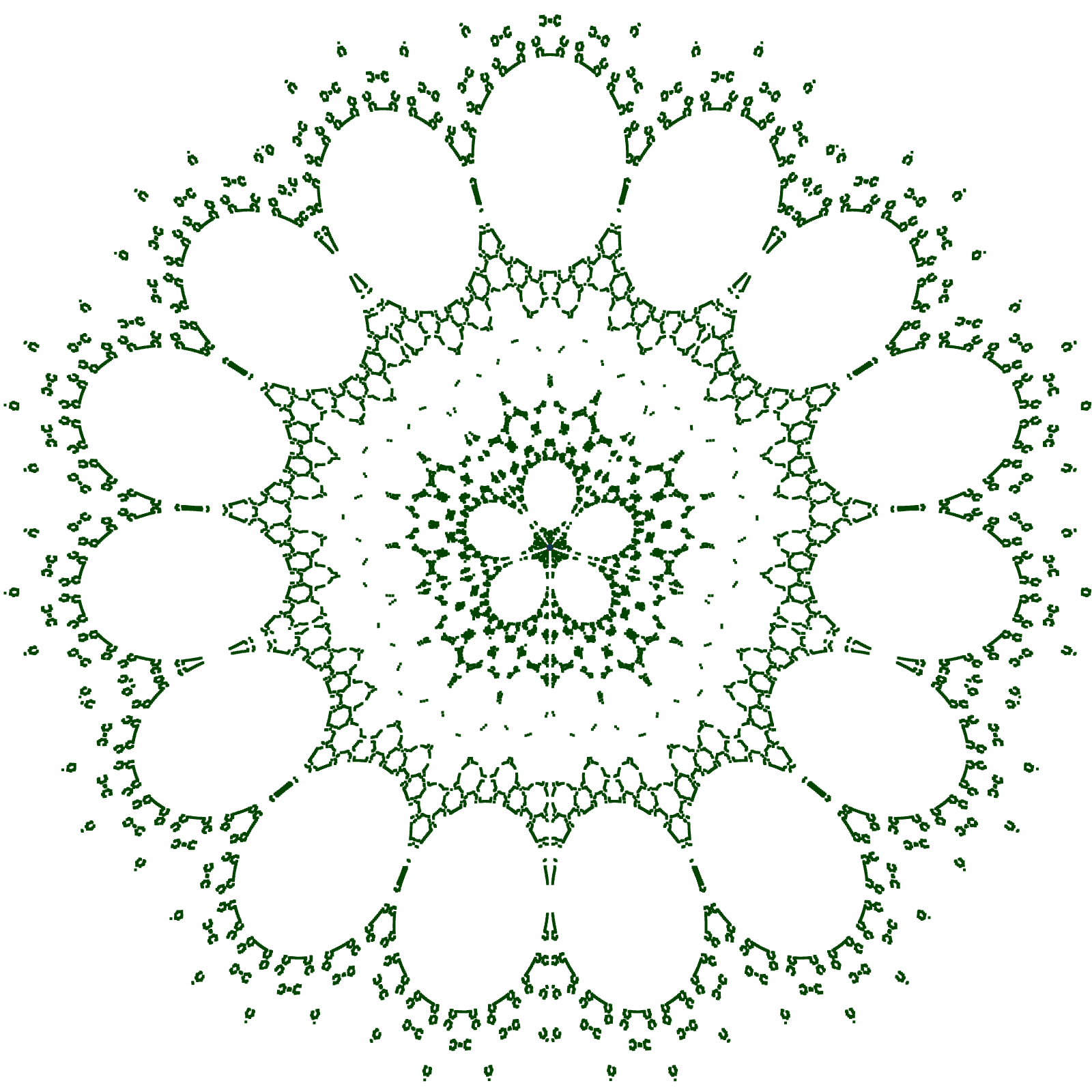}
    \caption{A single orbit of the auxiliary circle centers around a regular
    pentagon, which is too small to see at the center of the image.}
    \label{fig:cloud}
\end{figure}

The centers of the auxiliary circles are dynamically interesting in their own
right. We can ``reverse engineer'' the construction of the outer length billiard
map from the center of an auxiliary circle as follows.

Let $C$ be a point outside $K$ and let $P$ be the closest point of $\partial K$
to $C$. Suppose further that $P$ does not lie on a line segment contained in
$\partial K$. Then the circle centered at $C$ and passing through $P$ has three
common support lines with $K$, which generically have three pairwise points of
coincidence. The two of these which lie on the line passing through $P$ are
related to one another via the outer length billiard map.

This construction shows that to each trajectory of the outer length billiard map
is associated a trajectory of the centers of the auxiliary centers. The centers
corresponding to a complicated orbit is shown in Figure \ref{fig:cloud}
\begin{proposition}
    \label{prop:mid}
    Let $Q$ be a convex $n$-gon, $x$ be a point at which both forward and
    reverse outer length billiard maps are defined. If $C_+$ and $C_-$ are the
    centers of the auxiliary circles used to compute the forward and reverse
    maps, then the midpoint $M$ between $C_+$ and $C_-$ lies on the
    perpendicular bisector of one of the sides or diagonals of $Q$.
\end{proposition}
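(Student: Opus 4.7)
The plan is to recognize both $C_+$ and $C_-$ as circles inscribed in the angle at $x$ formed by the two support lines $\ell_1,\ell_2$ from $x$ to $Q$; their centers, and hence $M$, will then lie on the bisector of that angle, and the required equidistance of $M$ from two vertices of $Q$ becomes a short computation. Write $l:=L(x)$ for the vertex on $\ell_1$ and $r:=R(x)$ for the vertex on $\ell_2$. By construction $C_+$ is tangent to both $\ell_1$ and $\ell_2$, with tangency on $\ell_2$ at $r$. For $C_-$, I would unpack the forward construction applied at $w:=T^{-1}(x)$: its three mutual support lines with $Q$, say $\ell_1^w,\ell_2^w,\ell_3^w$, satisfy $x=\ell_2^w\cap\ell_3^w$, so $\ell_2^w$ and $\ell_3^w$ are both support lines to $Q$ through $x$, and therefore coincide as a set with $\{\ell_1,\ell_2\}$. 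Hence $C_-$ is tangent to both $\ell_1$ and $\ell_2$, with tangency at $l$ on the line playing the role of $\ell_2^w$. By the equal-tangent-length property the tangent lengths from $x$ to $C_+$ and $C_-$ are $|xr|$ and $|xl|$ respectively.

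With both centers on the bisector of $\angle \ell_1 x \ell_2$, I would place $x$ at the origin with the bisector along a coordinate axis, write $\theta$ for the half-angle, and find that $C_\pm$ sit at signed distances $|xr|/\cos\theta$ and $|xl|/\cos\theta$ from $x$ along the bisector. Consequently $M$ lies at distance $D:=(|xl|+|xr|)/(2\cos\theta)$ from $x$. The feet of the perpendiculars from $M$ to $\ell_1$ and to $\ell_2$ are both at distance $D\cos\theta=(|xl|+|xr|)/2$ from $x$ along the respective line, and the perpendicular distance $D\sin\theta$ is the same for both. By the Pythagorean theorem,
\begin{equation*}
  |Ml|^2 = (D\sin\theta)^2 + \bigl((|xl|-|xr|)/2\bigr)^2 = |Mr|^2,
\end{equation*}
so $M$ lies on the perpendicular bisector of $lr$. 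Since $l$ and $r$ are distinct vertices of $Q$, the segment $lr$ is a side of $Q$ when $l$ and $r$ are adjacent and a diagonal otherwise.

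The main obstacle is the first paragraph's bookkeeping, namely confirming that $C_-$ is tangent to \emph{both} support lines at $x$ and not only to the one through $l$, by identifying the three mutual support lines of the reverse-map circle with the support lines at $x$. Once that identification is in hand, the rest is an elementary exercise with the Pythagorean theorem.
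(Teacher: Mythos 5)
Your overall architecture is sound and is essentially the paper's argument in different clothing: both proofs rest on the facts that $C_+$ and $C_-$ are each tangent to both support lines through $x$ (so their centers lie on a common bisector line through $x$), that the tangent lengths from $x$ are $|xr|$ and $|xl|$, and then an elementary computation; the paper packages this as the similar right triangles $\triangle x v_+ C_+\sim\triangle x v_- C_-$ followed by projecting both centers onto the line $v_+v_-$. Your identification of $\{\ell_2^w,\ell_3^w\}$ with $\{\ell_1,\ell_2\}$ to see that $C_-$ is tangent to both lines is a correct and necessary step.

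However, your description of the configuration in the middle paragraph is wrong, and the formulas you derive from it are false. The two circles are \emph{not} inscribed in the same sector at $x$. The tangency of $C_+$ with $\ell_2$ is at the vertex $r$ (on the $Q$ side of $x$), but its tangency with $\ell_1$ is at a point $p$ with $l$, $x$, $p$ in that order --- this is exactly the collinearity $|F_1x|+|xp|=|F_1p|$ used in the proof of Proposition \ref{prop:2-gon}, and it is forced by the requirement that the third support line meet $\ell_2$ at a point $y$ beyond $r$. Hence $C_+$ lies in the sector bounded by the ray from $x$ through $r$ and the ray from $x$ \emph{opposite} to $l$; symmetrically, $C_-$ lies in the vertically opposite sector (its tangency with $\ell_2$ is beyond $x$, away from $r$). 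This is also what Theorem \ref{thm:centers} reflects: the centers sit roughly perpendicular to the radial direction, on the external bisector of $\angle lxr$, and on \emph{opposite} sides of $x$. Consequently $M$ lies at signed distance $(|xr|-|xl|)/(2\cos\theta)$ from $x$ along that bisector (with $\theta$ the half-angle of the side sector), not $(|xl|+|xr|)/(2\cos\theta)$; the feet of the perpendiculars from $M$ to $\ell_1$ and $\ell_2$ are at signed distance $(|xr|-|xl|)/2$ from $x$, and their distances to $l$ and to $r$ are each $(|xl|+|xr|)/2$, not $||xl|-|xr||/2$. The good news is that your Pythagorean step only needs those two distances to agree with \emph{each other}, which they still do, so the proof is repairable by redoing the sign bookkeeping; but as written, the claims about the position of $M$ and of the feet do not hold for the configuration that actually occurs.
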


\begin{proof}
    Let $v_+$ and $v_-$ be the vertices of $K$ closest to $C_+$ and $C_-$,
    respectively; note that these vertices may not be adjacent. Denote by
    $r_\pm$ the radii of the two circles.
    Let $\ell$ be the line containing $v_+$ and $v_-$. Let $F_\pm$ be the
    orthogonal projections of $C_\pm$ to $\ell$. It suffices to show that
    $|v_+F_+|=|v_-F_-|$.

    \begin{figure}
        \centering
        \includegraphics[width=0.8\textwidth]{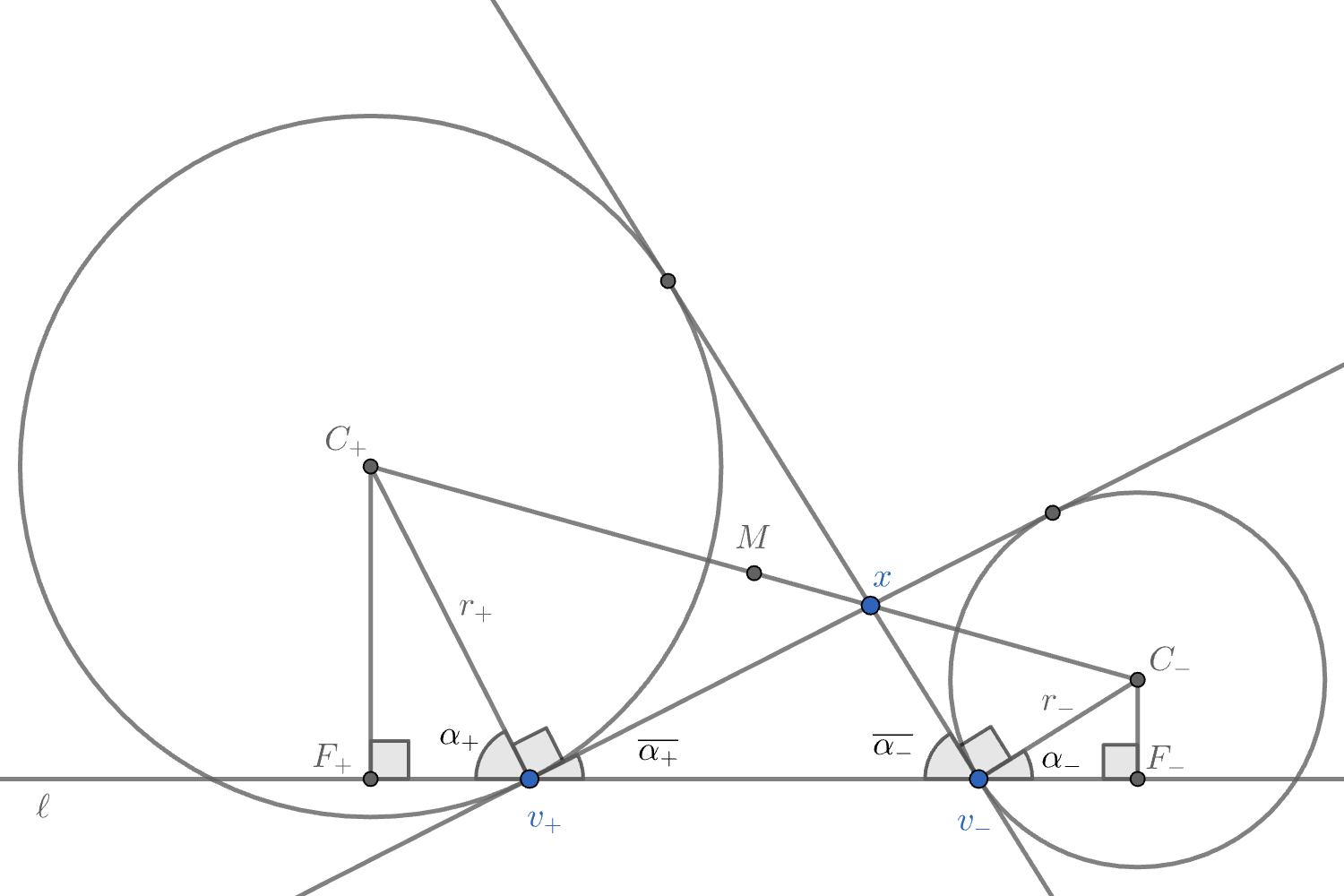}
        \caption{The forward and reverse auxiliary circle centers for point $x$.
        The table $Q$ is not shown, but it has vertices $v_+$ and $v_-$ and does
        not otherwise intersect the lines $\overline{xv_+}$ and
        $\overline{xv_-}$.}
        \label{fig:circles}
    \end{figure}

    With angles as labeled in Figure \ref{fig:circles}, we have
    \begin{align*}
        |v_+F_+|&=r_+\cos(\alpha_+)=r_+\sin(\overline{\alpha_+})\\
        |v_-F_-|&=r_-\cos(\alpha_-)=r_-\sin(\overline{\alpha_-}).
    \end{align*}
    By the Law of Sines,
    $$\frac{\sin(\overline{\alpha_+})}{\sin(\overline{\alpha_-})}
    =\frac{|xv_-|}{|xv_+|}.$$
    $\triangle xv_+C_+\sim\triangle xv_-C_-$, so
    $$\frac{|xv_-|}{|xv_+|}=\frac{r_-}{r_+}.$$
    Combining, we have
    \begin{align*}
        |v_+F_+|&=r_+\sin(\overline{\alpha_+})\\
                &=r_+\left(\sin(\overline{\alpha_-})\frac{|xv_-|}{|xv_+|}\right)\\
                &=r_+\left(\sin(\overline{\alpha_-})\frac{r_-}{r_+}\right)\\
                &=r_-\sin(\overline{\alpha_-})\\
                &=|v_-F_-|,
    \end{align*}
    so, indeed, the point $M$ lies on the perpendicular bisector of the segment
    $v_+v_-$. This segment is either a side or a diagonal of $Q$, so we have
    proved the proposition.
\end{proof}

Proposition \ref{prop:mid} suggests that midpoints between consecutive circle
centers are of interest and hint at a connection with the usual outer billiards.
This connection exists and is described in Theorem \ref{thm:centers} and
exemplified in Figure \ref{fig:hexagons}.
Let us fix some notation: let $\chi$ be the map which takes a point $x$ to the
center of the auxiliary circle which is constructed when finding $T(x)$. 
\begin{figure}
    \centering
    \includegraphics[width=0.48\textwidth]{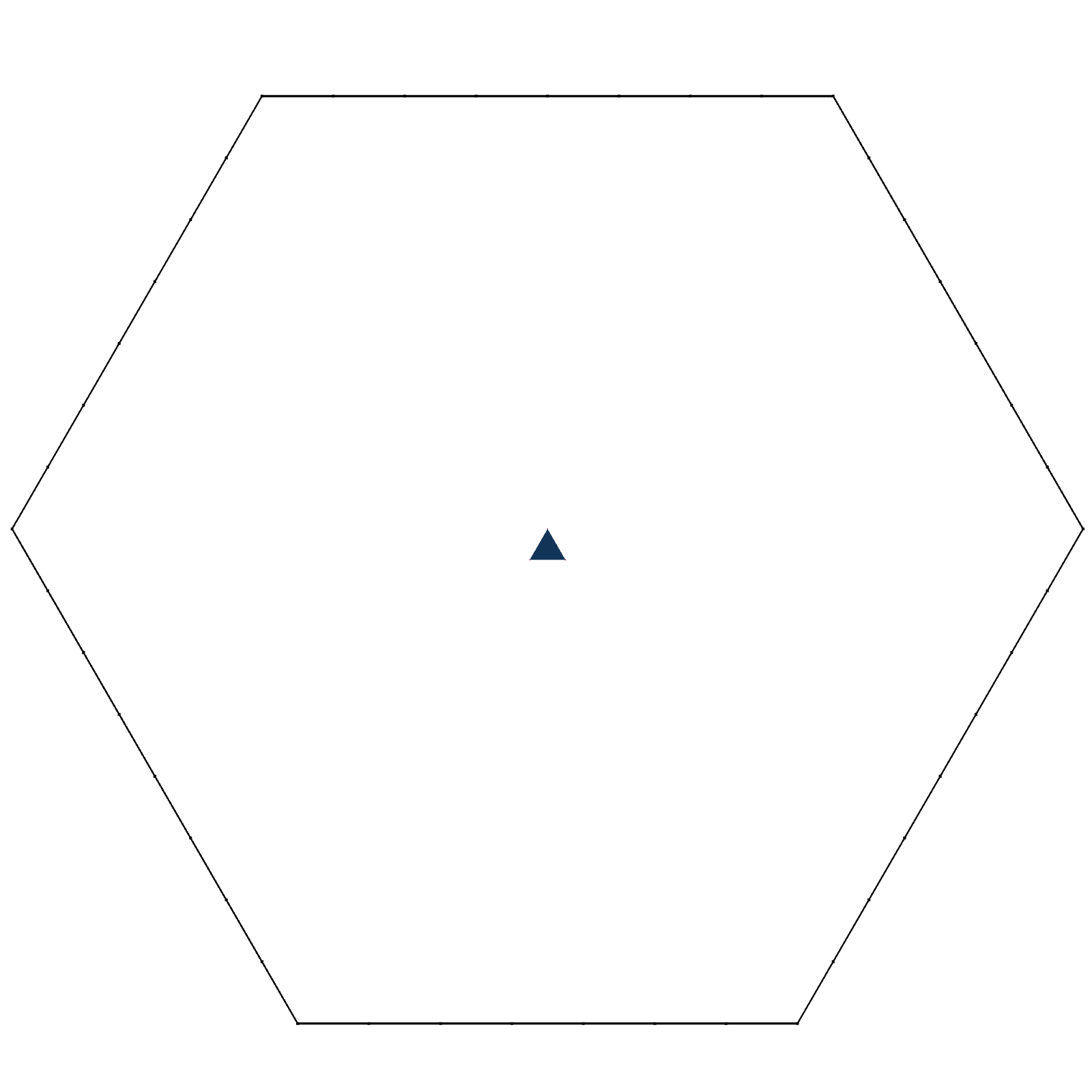}
    \hfill
    \includegraphics[width=0.48\textwidth]{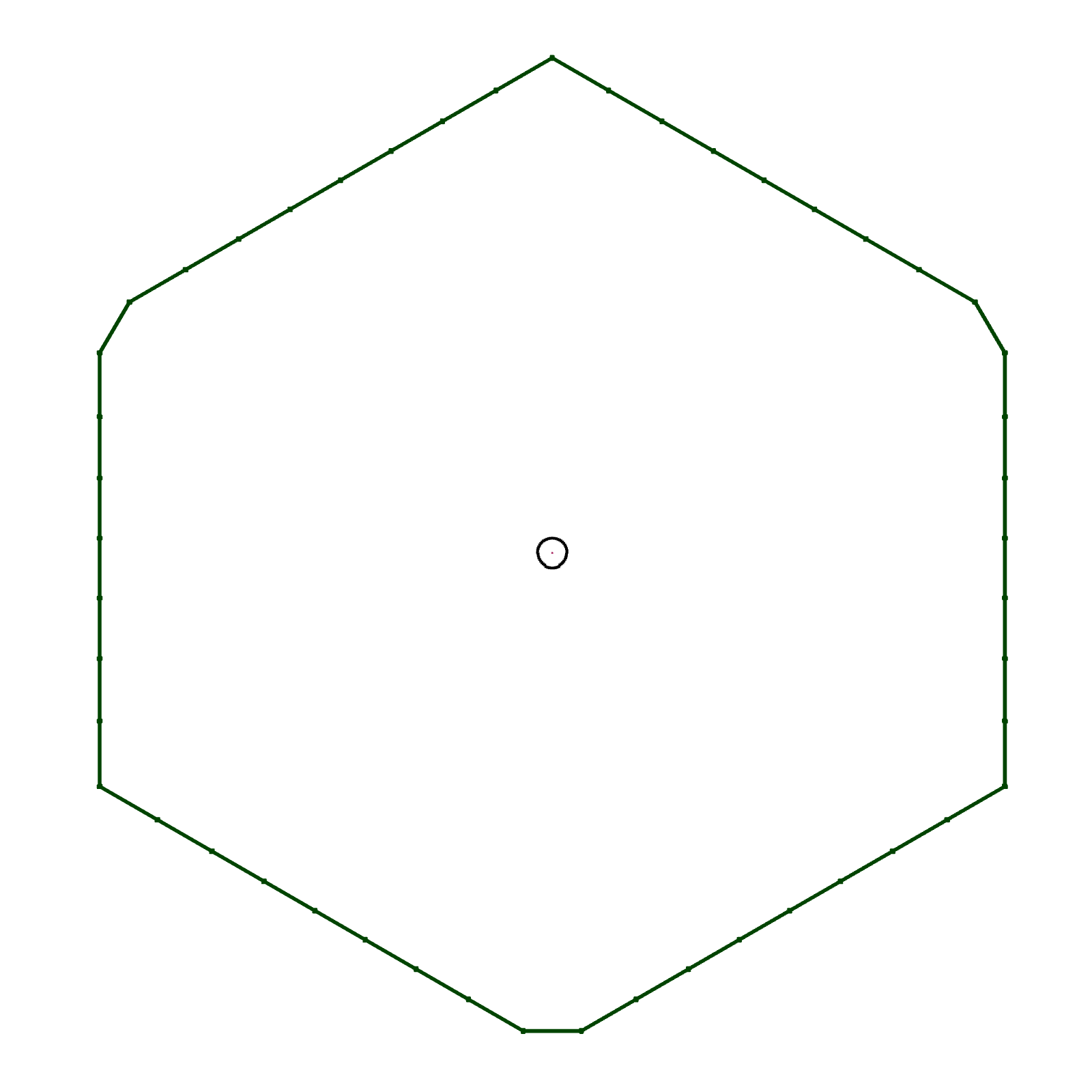}
    \caption{Left: a distant orbit of the usual outer billiard about an
        equilateral triangle. Right: a distant orbit of the outer length
        billiard about the same triangle is shown as a small black circle while
        the centers are shown in dark green. In all cases, each point is
    connected to its second image by a line segment for clarity.}
    \label{fig:hexagons}
\end{figure}

\begin{theorem}
    \label{thm:centers}
    Let $K$ be a convex polygon of diameter 1 whose interior contains the
    origin and let $C_1$ and $C_2$ be the constants from Theorem
    \ref{thm:circle}.
    Let $Q=d K$ where $0<d<1/C_2$. Let $\OO$ be the once-around
    orbit of a point $x$ on the unit circle; observe that $\OO$ is constrained
    to annulus of width $2dC_2$. 
    Further, let $\Gamma(\theta)$ be the radial function of the symplectic polar
    dual to the symmetrized
    version of $Q$. Then there exist positive constants $\epsilon$, and $C_3$
    depending on $Q$ such that if $d\leq\epsilon$ and $x=(r,\theta)$ is a point
    in $\OO$, then
    $$\norm{\chi(x)-\Gamma(\theta)\langle-\sin(\theta),\cos(\theta)\rangle} \leq C_3.$$
    In words, the once around orbit $\CC$ of the circle center map is
    constrained to a neighborhood of a rotated copy of a distant orbit of the
    outer area billiard about $Q$.
\end{theorem}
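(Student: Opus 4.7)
The plan is to derive an explicit geometric formula for $\chi(x)$ as a function of $x$'s polar angle $\theta$ and then match it, up to a bounded error, with the rotated dual curve in the statement.

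First, I would express $\chi(x)$ via the tangency conditions of the auxiliary circle. Since the circle is tangent to $\ell_2$ at the vertex $v_+(x) := \ell_2 \cap Q$ and to $\ell_1$ at an equal tangent distance from $x$, elementary trigonometry yields $\chi(x) = v_+(x) + R(x)\,\hat{n}(x)$, where $\hat{n}(x)$ is the unit inward normal to $\ell_2$ at $v_+$ and $R(x) = |xv_+|\tan\beta(x)$ with $\beta$ the half-wedge angle at $x$. Since $|v_+| \leq d$ (the diameter of $Q$), the first term contributes only a bounded offset.

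Next, I would derive asymptotic estimates. Using $|x| \geq 1$ and $|v_+| \leq d$ with $d$ small, one has $|xv_+| = |x| + O(d)$, and since the wedge at $x$ subtends the transverse width of $Q$, $\beta(x) = h_{Q_s}(\theta+\pi/2)/|x| + O(d^2)$. Hence $R(x) = h_{Q_s}(\theta+\pi/2) + O(d)$, which coincides with $1/\Gamma(\theta)$ up to $O(d)$. The inward normal $\hat{n}(x)$, perpendicular to the chord from $v_+$ to $x$, converges to the tangential direction $(\sin\theta, -\cos\theta)$ as $d \to 0$. Combining gives $\chi(x) = v_+ + h_{Q_s}(\theta+\pi/2)\cdot(\sin\theta, -\cos\theta) + O(d^2)$, whose shape matches the rotated dual expression in the claim modulo the bounded offset $v_+$ and the sign-and-orientation convention for the inward normal.

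The main obstacle will be handling the discrete jumps in $v_+(x)$ as $x$ traverses $\OO(x)$. By an argument parallel to Corollary \ref{cor:unsteady_count}, $v_+(x)$ takes at most $2n$ distinct values around a once-around orbit, each transition contributing a jump in $\chi$ of size at most the side length or diagonal length between consecutive tangent vertices, hence $O(d)$. Summing these with the within-phase approximation errors gives a total deviation of order $nd$, which I would absorb into the constant $C_3$. To cement the identification with a rotated outer area orbit, I would appeal to Proposition \ref{prop:mid}: the fact that consecutive circle-center midpoints lie on perpendicular bisectors of sides and diagonals of $Q$---the direct analogue of the vertex-reflection structure $T_{\mathrm{area}}(x) = 2v - x$ of the outer area billiard---certifies the rotated-dual interpretation and provides the link to the act-type description of distant outer area orbits recalled in Section \ref{sec:inf_area}.
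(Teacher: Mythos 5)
Your overall strategy --- express $\chi(x)$ from the tangency geometry of the auxiliary circle, estimate the wedge angle at $x$ by the width $w(\theta)$ of the table, and identify the resulting curve with the rotated polar dual --- is the same as the paper's. But there is a fatal error in the key formula. The auxiliary circle is \emph{not} the small circle inscribed in the wedge at $x$ that contains $Q$: it is tangent to $\ell_2$ at $v_+$ but tangent to $\ell_1$ at a point on the \emph{far} side of $x$ from the table (this is exactly what the identity $|F_1x|+|xp|=|F_1p|$ in the proof of Proposition \ref{prop:2-gon} records), so it sits in the wedge \emph{adjacent} to the one containing $Q$. Its radius is therefore $|xv_+|\cot(\phi/2)$, not $|xv_+|\tan(\phi/2)$: with $\phi\approx w(\theta)=O(d)$ and $|xv_+|\approx 1$ this gives $\rho\approx 2/w(\theta)=\Gamma(\theta)\sim 1/d$, a \emph{large} radius. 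Your $R(x)=|xv_+|\tan\beta\approx w(\theta)/2=1/\Gamma(\theta)=O(d)$ is the reciprocal quantity, and your final formula $\chi(x)=v_++h_{Q_s}(\theta+\pi/2)\,(\sin\theta,-\cos\theta)+O(d^2)$ places the centers at distance $O(d)$ from the origin, i.e.\ essentially on the table. This contradicts the theorem's conclusion (the centers trace a curve of radius $\Gamma(\theta)\sim 1/d$; cf.\ Figure \ref{fig:cloud}, where the table is ``too small to see'' relative to the center orbit), and the curve $r=h_{Q_s}(\theta+\pi/2)$ does not ``match the rotated dual expression'' --- it is its inversion. The paper's Lemma \ref{lem:radius} does precisely the step you attempted, with the correct $\rho=\ell\cot(\phi/2)=2/w(\theta)+O(1)$.

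Two smaller points. First, the statement is pointwise in $x\in\OO$, so your third paragraph about accumulating $O(d)$ jumps in $v_+$ over the once-around orbit, and the appeal to Proposition \ref{prop:mid}, are not needed: once $\rho=\Gamma(\theta)+O(1)$ and the direction from $x$ to $\chi(x)$ is shown to be $\theta+\pi/2+O(d)$ (which costs only $O(1)$ in position because $\rho\cdot O(d)=O(1)$), the bound $C_3$ follows for each point separately, with the constraint to the annulus already supplied by Theorem \ref{thm:circle}. Second, some care is needed with the error terms: since $w(\theta)$ is only bounded below by $d/k$, an additive $O(d^2)$ error in $\phi$ produces an $O(1)$ (not $O(d)$) error in $\rho=2/\phi\cdot\ell$, which is exactly why the theorem's conclusion is a bounded neighborhood rather than an $o(1)$ one; your claimed $O(d^2)$ remainder is not attainable.
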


In the discussion that follows, let $w(\theta)$ be the width of the table as
viewed from direction $\theta$, and let $k$ be the ``aspect ratio'' of $Q$:
$$k=\frac{d}{\min_\theta w(\theta)}\in[1,\infty).$$

\begin{lemma}
    \label{lem:phi}
    If $x=(r,\theta)$ in polar coordinates, with $|r-1|\leq C_1$ (the width of
    the annulus from Theorem \ref{thm:circle}), then the angle
    $\phi=\angle pxq$ (where $p=L(x)$ and $q=R(x)$ as in Figure \ref{fig:phi})
    satisfies
    $$\phi=w(\theta)+O(d^2).$$
\end{lemma}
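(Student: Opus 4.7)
The plan is to reduce the statement to a Taylor expansion in the coordinates of $p = L(x)$ and $q = R(x)$. The guiding idea is that when $d$ is small, $Q$ is small in absolute terms while $x$ remains at distance $\approx 1$ from the origin, so the two support rays from $x$ are almost anti-parallel and the angle $\phi$ they subtend is well approximated by the projected width of $Q$ divided by the distance to $x$.

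First I rotate so that $\theta = 0$, writing $x = (r, 0)$. Theorem \ref{thm:circle} applied to $Q = dK$ gives $|r - 1| = O(d)$, and since $Q$ contains the origin and has diameter $d$, every coordinate of every vertex of $Q$ is $O(d)$. Writing $p = (p_1, p_2)$ and $q = (q_1, q_2)$ with $p_2 > 0 > q_2$, a direct computation gives
\[ \phi = \arctan\left(\frac{p_2}{r - p_1}\right) + \arctan\left(\frac{-q_2}{r - q_1}\right). \]
Using $\arctan(z) = z + O(z^3)$ with $z = O(d)$ and the geometric series $(r - p_1)^{-1} = r^{-1} + p_1 r^{-2} + O(d^2)$, this yields
\[ \phi = \frac{p_2 - q_2}{r} + \frac{p_1 p_2 - q_1 q_2}{r^2} + O(d^3) = \frac{p_2 - q_2}{r} + O(d^2), \]
where the second equality uses $1/r^2 = O(1)$ and $p_1 p_2 - q_1 q_2 = O(d^2)$.

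Next I identify $p_2 - q_2$ with $w(\theta)$. The slope of the line $xp$ equals $-p_2/(r - p_1) = O(d)$. In any convex polygon each vertex is the support point for an open interval of support-line slopes whose length is a geometric constant depending only on the interior angle at that vertex; for $d$ less than a constant $\epsilon(K)$, the only vertex of $Q$ whose support cone contains slopes of magnitude $O(d)$ is the one extremal in the $y$-direction of the rotated frame. Thus $p$ and $q$ are the vertices maximizing and minimizing the $y$-coordinate, and $p_2 - q_2$ equals the width $w(\theta)$ perpendicular to the viewing direction. Combining this with $1/r = 1 + O(d)$ and $w(\theta) = O(d)$ gives
\[ \phi = \frac{w(\theta)}{r} + O(d^2) = w(\theta) + O(d \cdot w(\theta)) + O(d^2) = w(\theta) + O(d^2), \]
as claimed.

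The main subtlety is the vertex-identification step. At the finitely many directions $\theta$ for which $\theta + \pi/2$ is normal to a side of $Q$, the extremal vertex is ambiguous; however, these directions correspond precisely to the side-extension singularities excluded from the domain of $T$. The quantitative requirement $d < \epsilon(K)$ that makes the identification uniform over $\theta$ is the source of the small-$d$ hypothesis already present in Theorem \ref{thm:centers}.
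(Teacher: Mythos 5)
Your overall route is the same as the paper's (rotate so $x=(r,0)$, expand the two arctangents, use $1/r=1+O(d)$), and the expansion itself is fine. The gap is in the vertex-identification step, which is the one place where something nontrivial has to be said. Your claim that for $d<\epsilon(K)$ the support vertex $p=L(x)$ is \emph{always} the vertex of $Q$ extremal in the $y$-direction of the rotated frame is false: the support cones of the vertices are scale-invariant, so shrinking $d$ does not help. What matters is how close the vertical direction is to the boundary of the extremal vertex's normal cone, and that depends on $\theta$, not on $d$. For $\theta$ in an $O(d)$-neighborhood of a direction where a side of $Q$ is nearly perpendicular to the radial direction, the $O(d)$ tilt of the support line from $x$ can push the support point to the adjacent vertex, so $p\neq a$. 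Your fallback --- that the ambiguous directions ``correspond precisely to the side-extension singularities excluded from the domain of $T$'' --- is also wrong on two counts: the excluded set consists of points $x$ lying on side extensions (a condition on the position of $x$, not on $\theta+\pi/2$ being normal to a side), and in any case the problem is not confined to finitely many directions but occurs on neighborhoods of positive measure.

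The conclusion of that step survives, but it needs a different justification. Either note that when $p\neq a$ the support line through $p$ has slope $O(d)$ and lies above $a$, so $a_y\leq p_y+O(d)\cdot O(d)$, whence $|p_y-a_y|=O(d^2)$ and the misidentification is absorbed into the error term; or do what the paper does, which is to sandwich the angle $\alpha$ without ever identifying $p$ with $a$: the lower bound $\alpha\geq\arctan\bigl(a_y/(a_x-r)\bigr)$ uses only that $a\in Q$ lies on the correct side of the support line, and the upper bound uses only that $|p_y|\leq|a_y|$ by extremality of $a$. Both bounds equal $|a_y|+O(d^2)$, so the conclusion follows with no case analysis in $\theta$ at all. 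As written, your proof has a genuine hole at exactly the directions where the lemma is hardest, even though the statement you are trying to prove there is true.
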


\begin{figure}
    \centering
    \includegraphics[width=0.5\textwidth]{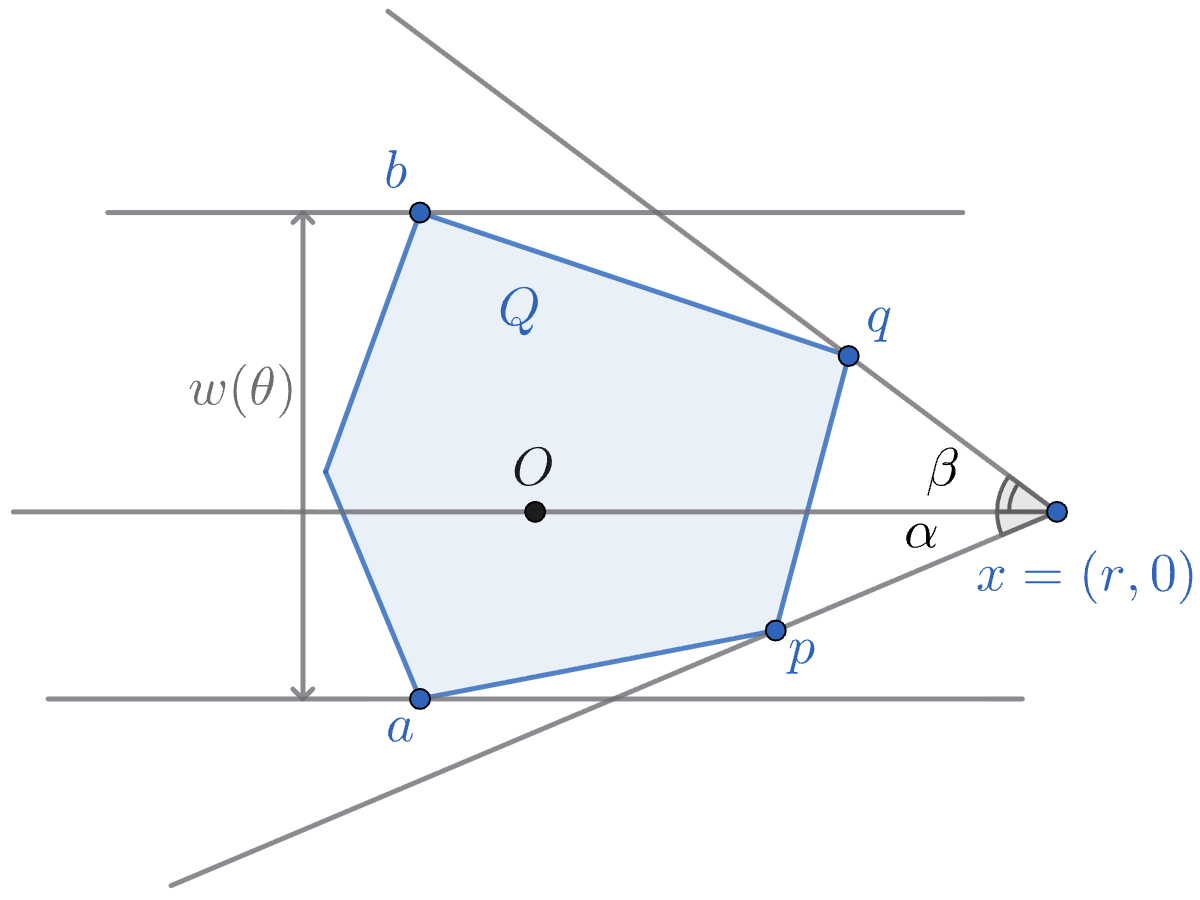}
    \caption{The configuration considered in the proof of Lemma \ref{lem:phi}.}
    \label{fig:phi}
\end{figure}

\begin{proof}
    Rotate the coordinate system so that $x=(r,0)$. With notation as in Figure
    \ref{fig:phi}, we have $\phi=\alpha+\beta$. Points $a$ and $b$ are
    extreme points of $Q$ such that $w(\theta)=w(0)=|a_y|+|b_y|$.

    We will bound $\alpha$, first from below. Let $m_a=\frac{a_y}{a_x-r}$ be the
    slope of the line passing through $a$ and $(1,0)$. Notice that, since
    $|a_x|,|a_y|\leq d$,
    $$m_a=|a_y|\left(\frac{1}{r-a_x}\right)
    =|a_y|\cdot\frac{1}{r}\left(1+\frac{a_x}{r}+\frac{a_x^2}{r^2}+\cdots\right)$$
    We can write $r=1+O(d)$ and thus
    $$\frac{1}{r}=(1+O(d)+O(d)^2+\cdots)=1+O(d)$$
    so we have
    $$\frac{a_x}{r}=a_x+a_xO(d)=a_x+O(d^2).$$
    Together, this gives
    \begin{align*}
        m_a&=|a_y|(1+O(d))\left(1+(a_x+O(d^2))+(a_x+O(d^2))^2+\cdots\right)\\
           &=|a_y|+|a_y|a_x+|a_y|O(d)+O(d^3)\\
           &=|a_y|+O(d^2).
    \end{align*}

    It must be that
    $\alpha\geq\angle Oxa=\arctan(m_a)$, so we Taylor expand to find that
    \begin{align*}
        \alpha&=\arctan(m_a)\\
              &=m_a - \frac{m_a^3}{3}+\frac{m_a^5}{5}+\dots\\
              &=|a_y|+O(d^2)+O(d^3)\\
              &=|a_y|+O(d^2),
    \end{align*}
    where we make use of the fact that, of course, $m_a=O(d)$.

    Now, we show an upper bound for $\alpha$. Let $m_p=\frac{p_y}{p_x-r}$ and
    notice that $p_y\geq a_y$.
    We have that
    \begin{align*}
        \alpha&=\arctan(m_p)\\
              &\leq m_p\\
              &=\frac{-p_y}{1-p_x}\\
              &\leq |a_y|\left(\frac{1}{r-p_x}\right)\\
              &=|a_y|\cdot \frac{1}{r}\left(1+\frac{p_x}{r}+\frac{p_x^2}{r^2}+\cdots\right)\\
              &=|a_y|+O(d^2),
    \end{align*}
    where the last two inequalities follow similar logic to that used in the lower
    bound.
    
    We now have both upper and lower bounds for $\alpha$ of the form
    $|a_y|+O(\epsilon^2)$, so $\alpha$ itself must have the same form. An
    identical argument for $\beta$ allows to write that
    \begin{align*}
        \phi&=\alpha+\beta\\
            &=(|a_y|+O(d^2))+(|b_y|+O(d^2))\\
            &=|a_y|+|b_y|+O(d^2)\\
            &=w(0)+O(d^2)
    \end{align*}
    as desired.
\end{proof}

\begin{lemma}
    \label{lem:radius}
    Let $x=(r,\theta)$ with $|r-1|\leq C_1$. Then the radius $\rho$ of the
    auxiliary circle centered at $\chi(x)$ satisfies
    $$\rho=\frac{2}{w(\theta)}+O(1).$$

\end{lemma}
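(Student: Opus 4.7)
The plan is to reduce the lemma to the identity $\rho = |xq|\cot(\phi/2)$, where $q = R(x)$ is the tangent point of the auxiliary circle with $\ell_2$, and then to substitute the estimate $\phi = w(\theta) + O(d^2)$ from Lemma \ref{lem:phi} together with $|xq| = 1 + O(d)$, which follows from $|r - 1| \leq C_1 = O(d)$ (Theorem \ref{thm:circle} applied to $Q$, which has diameter $d$) and $|q| \leq d$.

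To obtain the geometric identity, I would observe that the auxiliary circle is tangent to $\ell_2$ at $q$, so its center $C$ lies on the line perpendicular to $\ell_2$ at $q$; being also tangent to $\ell_1$, $C$ lies on a bisector of the pair $(\ell_1,\ell_2)$ at $x$. The proof of Proposition \ref{prop:2-gon}, whose identity $|F_1 x| + |xp| = |F_1 p|$ places $x$ strictly between the tangent point $p$ on $\ell_1$ and the vertex $F_1$, confirms that $C$ sits on the \emph{external} bisector rather than the internal one. The right triangle $\triangle xqC$ then has a right angle at $q$, legs $|xq|$ and $\rho$, and angle $\pi/2 - \phi/2$ at $x$, giving $\rho = |xq|\tan(\pi/2 - \phi/2) = |xq|\cot(\phi/2)$.

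Combining Lemma \ref{lem:phi} with the small-angle expansion $\cot(u) = 1/u + O(u)$ then yields
\[
\cot(\phi/2) = \frac{2}{\phi} + O(\phi) = \frac{2}{w(\theta) + O(d^2)} + O(d),
\]
and the lower bound $w(\theta) \geq d/k$ from the finite aspect ratio converts the relative error $O(d^2)/w(\theta) = O(d)$ into an absolute error $O(1)$ on $2/w(\theta) = O(1/d)$. Thus $\cot(\phi/2) = 2/w(\theta) + O(1)$, and multiplying by $|xq| = 1 + O(d)$ is harmless, producing $\rho = 2/w(\theta) + O(1)$ as claimed.

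The only delicate point is the error bookkeeping: an $O(d^2)$ absolute error in $\phi$ becomes an $O(1)$ absolute error in $2/\phi$ precisely because the width $w(\theta)$ is bounded below by $d/k$ uniformly in $\theta$. With that lower bound in hand, the remainder of the argument is routine Taylor expansion.
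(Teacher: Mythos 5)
Your proposal is correct and follows essentially the same route as the paper: both reduce the lemma to the tangent-length identity $\rho=\ell\cot(\phi/2)$ (the paper gets it by noting $\angle p\chi(x)x=\phi/2$, you by the right triangle at the tangency point on the external bisector), then combine Lemma \ref{lem:phi}, the expansion $\cot(u)=1/u+O(u)$, and the lower bound $w(\theta)\geq d/k$ to turn the relative errors into an absolute $O(1)$. Your justification that the center lies on the external rather than internal bisector, via the ordering of $F_1$, $x$, $p$ in Proposition \ref{prop:2-gon}, is a nice explicit touch that the paper leaves to ``angle chasing.''
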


\begin{figure}
    \centering
    \includegraphics[width=0.5\textwidth]{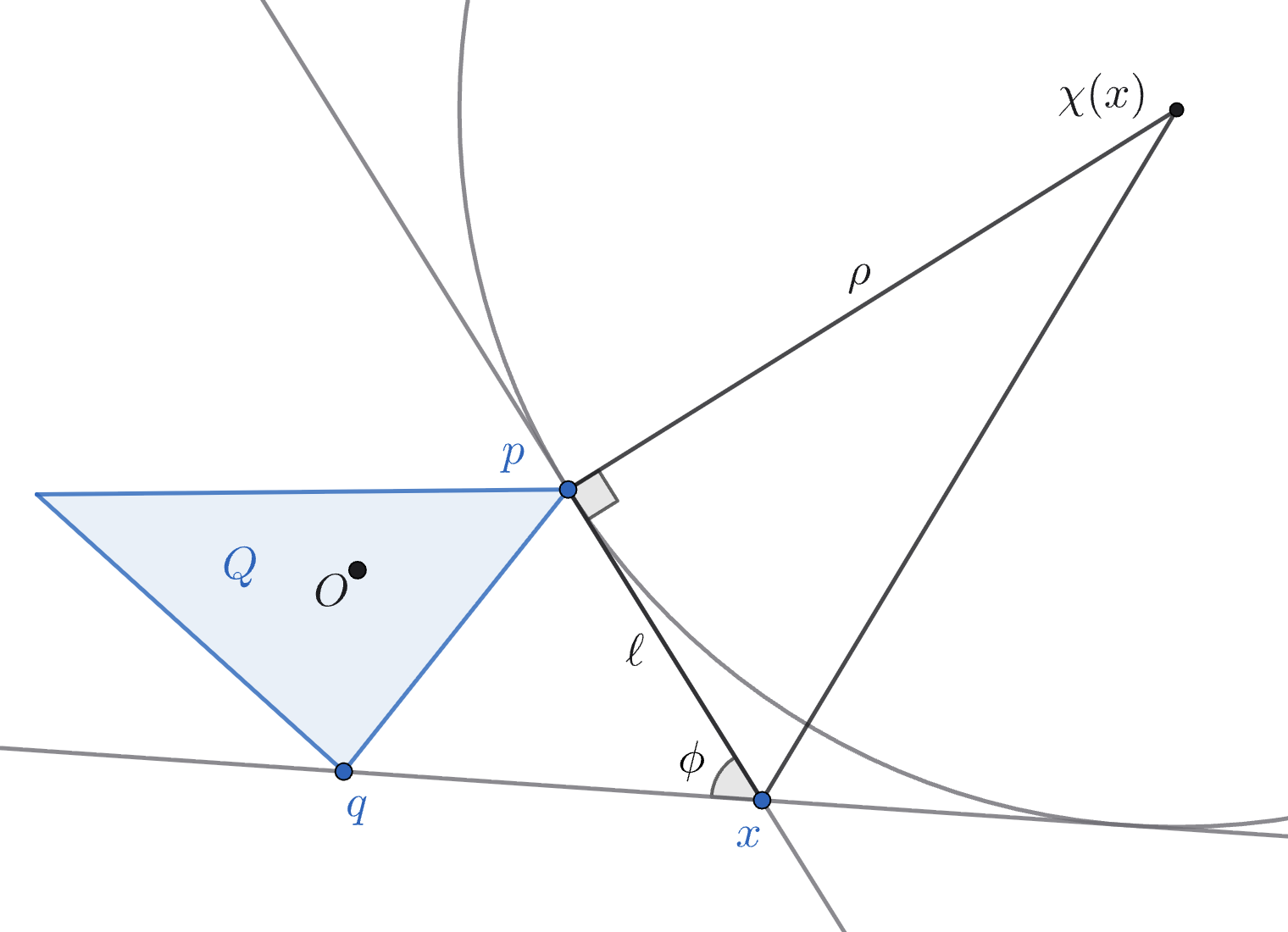}
    \caption{The configuration considered in the proof of Lemma
    \ref{lem:radius}.}
    \label{fig:radius}
\end{figure}

\begin{proof}
    Fix notation as in Figure \ref{fig:radius}. By the triangle inequality, we
    have that $|\ell-r|\leq d$, which gives $|\ell - 1|\leq C_1+d$.
    Some ``angle chasing'' reveals that $\angle p\chi(x) x=\phi/2$.
    Thus, $\rho=\ell\cot(\phi/2)$. Using the Taylor expansion of the cotangent
    function and Lemma \ref{lem:phi} with the observation that
    $\phi = w(\theta) + O(d^2)=O(d)$, we have
    \begin{align*}
        \rho&=\ell\left(\frac{2}{\phi}-\frac{\phi}{6}+O(\phi^3)\right)\\
         &=\frac{2\ell}{\phi}-\frac{(1+O(d))\phi}{6}+(1+O(d))O(d^3)\\
         &=\frac{2\ell}{\phi}+O(d).
    \end{align*}
    We must now be slightly more precise in handling the remaining term. First,
    notice that
    \begin{align*}
        \frac{2}{\phi}&=\frac{2}{w(\theta)+O(d^2)}\\
                      &=\frac{2}{w(\theta)}
                      \left(1+\frac{O(d^2)}{w(\theta)}+\frac{O(d^2)^2}{w(\theta)^2}+\cdots\right)
    \end{align*}
    Since $w(\theta)$ is bounded below by $d/k$, we can safely replace
    $O(d^2)/w(\theta)$ with $O(d)$ and $O(d)/w(\theta)$ with $O(1)$, yielding
    $$\frac{2}{\phi}=\frac{2}{w(\theta)}(1+O(d))=\frac{2}{w(\theta)}+O(1)$$
    We obtain
    \begin{align*}
        \rho&=\frac{2}{\phi\ell}+O(d)\\
         &=\left(\frac{2}{w(\theta)}+O(1)\right)(1+O(d))+O(d)\\
         &=\frac{2}{w(\theta)}+\frac{2O(d)}{w(\theta)}+O(1)\\
         &=\frac{2}{w(\theta)}+O(1).
    \end{align*}
\end{proof}

\begin{proof}[Proof of Theorem \ref{thm:centers}]
    First, we must make precise what the curve $\Gamma$ is. Referring to Section
    \ref{sec:inf_area} and Figure \ref{fig:qsstar} we see that $Q_s$ has support
    function $h_{Q_s}(\theta)=w(\theta+\pi/2)/2$, where, again,
    $$w(\theta)=h_Q(\theta+\pi/2)+h_Q(\theta-\pi/2)$$
    gives the width of $Q$ as viewed from angle $\theta$. Therefore, $(Q_s)^*$
    has radial function
    $$\Gamma(\theta)=\frac{1}{h_{Q_s}(\theta+\pi/2)}=\frac{2}{w(\theta)}.$$
 
    \begin{figure}
        \centering
        \includegraphics[width=0.8\textwidth]{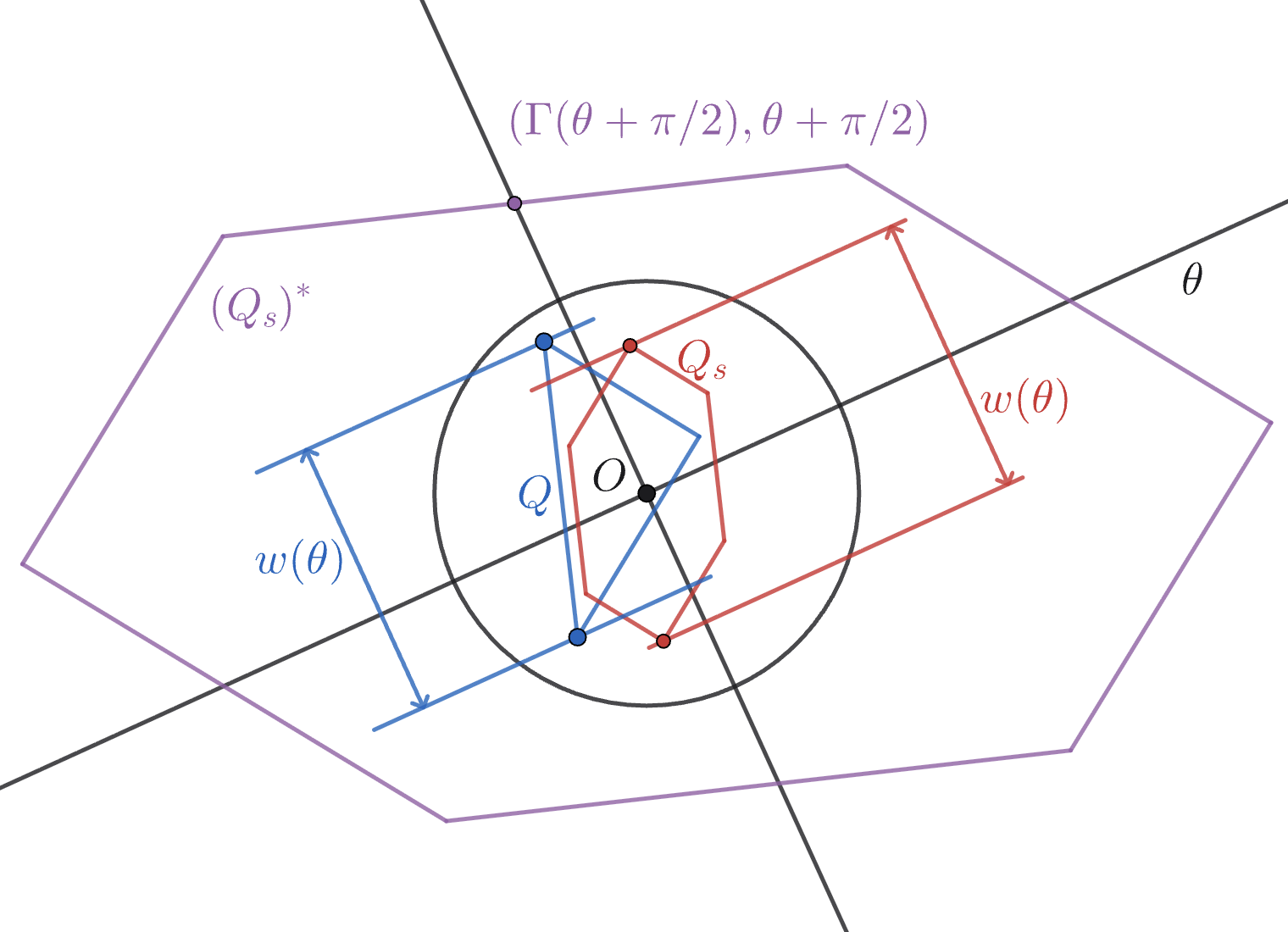}
        \caption{A polygon $Q$ in blue, its symmetrization $Q_s$ in red, and the
            polar dual $Q_s^\dagger$ in purple.}
        \label{fig:qsstar}
    \end{figure}

    Referring to Figure \ref{fig:radius} and Lemma \ref{lem:radius}, we notice
    that the hypotenuse of
    $\triangle p x \chi(x)$ has length $\rho+O(\ell)=\rho+O(1)$ by the triangle
    inequality.
    See also that $\psi:=\angle O x \chi(x)$ satisfies
    $$\frac{\pi}{2}-\frac{\phi}{2}\leq\psi\leq\frac{\pi}{2}+\frac{\phi}{2}.$$
    Put another way, $|\psi-\pi/2|\leq\frac{\phi}{2}=w(\theta)/2+O(d^2)$.

    Thus $\chi(x)-\vec{x}$ can be written in polar coordinates as
    $$\left(\rho+O(1),\theta+\frac{\pi}{2}+O(\phi)\right)
    =\left(\frac{2}{w(\theta)}+O(1),\theta+\frac{\pi}{2}+O(d)\right)$$
    Using once again that $w(\theta)$ is bounded below, we see that the $O(d)$
    error term in the angular coordinate only results in $O(1)$ distance in
    space. Finally, the offset $\vec{x}$ is absorbed into the $O(1)$ term,
    leaving
    $$\chi(x)=\frac{2}{w(\theta)}\langle -\sin(\theta),\cos(\theta) \rangle + O(1),$$
    as desired. Interpreting the big-$O$ notation as a pair of constants
    completes the proof.

\end{proof}

\section{Experimental results and conjectures}
\label{sec:experiment}

\subsection{Orbit and singularity structure}
\label{sec:periodic}

After much computer experimentation, we are led to believe that periodic,
quasi-periodic, non-periodic, and escaping orbits are all plentiful in polygonal
outer length billiards.

\begin{conjecture}
    Every polygonal outer length billiard admits a periodic orbit of length 3.
\end{conjecture}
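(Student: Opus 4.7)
The plan is variational. By the characterization of outer length billiard trajectories as critical points of the length functional on circumscribed broken lines, a 3-periodic orbit around $Q$ is precisely a triangle $\triangle XYZ$ whose three sides are support lines of $Q$ and whose perimeter $|XY|+|YZ|+|ZX|$ is a critical point of perimeter on the space of all such circumscribed triangles. I would produce such a critical point as a minimum.

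To this end I would parameterize the space $\mathcal{S}$ of circumscribed triangles by the directions $\theta_1,\theta_2,\theta_3\in S^1$ of their three outward normals. For an $n$-gon, the map sending $\theta$ to its support line is continuous; it pivots about a single vertex of $Q$ for $\theta$ in each of $n$ open subintervals of $S^1$, and the support line coincides with an entire edge of $Q$ at the $n$ critical values of $\theta$ between them. The perimeter $P:\mathcal{S}\to\mathbb{R}$ is then continuous, and I would verify coercivity: as any two of the $\theta_i$ converge to the same direction, the two corresponding support lines become parallel, their intersection recedes to infinity, and $P\to\infty$; since also $P$ is bounded below by the perimeter of $Q$, it attains its infimum at some triangle $\triangle^*$.

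At a \emph{generic} configuration---one in which each side of $\triangle^*$ passes through a single vertex of $Q$, rather than lying along an edge---$P$ is smooth in $(\theta_1,\theta_2,\theta_3)$, and by the variational characterization of the outer length billiard, the first-order conditions for criticality identify $\triangle^*$ as a genuine 3-periodic orbit of $T$. The main obstacle is to rule out the possibility that $\triangle^*$ instead lies at a \emph{singular} configuration in which one of its sides coincides with an entire edge of $Q$; at such a point $P$ has only one-sided directional derivatives in the corresponding angular variable, and the candidate triangle sits on the singular locus of $T$ described in Section \ref{sec:basics}, so it does not constitute an actual orbit. To rule this out I would parameterize the two admissible rotations about the two endpoints of the aligned edge, compute the one-sided derivatives of $P$, and show that at least one strictly decreases $P$, contradicting minimality. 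Making this local geometric estimate work uniformly across all such configurations is the hard part; if it fails in some cases, fallback strategies include a min-max (mountain-pass) construction to locate a smooth saddle critical point of $P$, or a continuation argument from the triangular case settled by Theorem \ref{thm:extouch}, deforming the table while tracking 3-periodic orbits through bifurcations.
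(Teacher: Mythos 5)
First, a point of context: this statement is posed as a \emph{conjecture} in the paper. The paper proves it only for triangular tables (Theorem \ref{thm:extouch} and its corollary, via an intermediate-value-theorem argument on the extouch side-length relations) and explicitly leaves the general polygonal case open. So there is no proof in the paper to compare yours against; your proposal is an attempt at a new result and must stand on its own.

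The core strategy---producing the critical point as a global \emph{minimum} of perimeter over circumscribed triangles---cannot work, and the obstruction is worse than the ``hard part'' you flag. Perimeter is monotone under inclusion of convex bodies, so every triangle circumscribed about $Q$ has perimeter at least that of $Q$. When $Q$ is itself a triangle, the infimum of $P$ over your space $\mathcal{S}$ equals the perimeter of $Q$ and is attained exactly at the fully degenerate configuration in which all three support lines are the edge lines of $Q$: the global minimizer \emph{is} the singular configuration, so no one-sided-derivative computation can show that ``at least one direction strictly decreases $P$'' there, and the minimizer is not an orbit of $T$. Thus the minimization scheme already fails in the one case where the conjecture is known to hold; the known 3-periodic orbit of Theorem \ref{thm:extouch} is an interior critical point of $P$ that is manifestly \emph{not} the global minimum. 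The same pathology is expected for $n\geq 4$ (for a square, the minimal-perimeter circumscribed triangle appears to have a side flush with an edge, hence lies on the singular locus where $T$ is undefined). Your fallbacks point in the right direction---what is needed is a min-max or saddle-type critical point, or a continuation from the triangular case---but as written they are one-sentence placeholders, and for a piecewise-smooth, non-coercive-from-above functional on a non-compact domain containing a singular locus, constructing such a critical point and verifying that it avoids the singular configurations is precisely the open problem. As it stands, the proposal does not establish the conjecture.
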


This is resolved for triangular tables in Section \ref{sec:triangle}. An example
around an irregular pentagon is shown in Figure \ref{fig:irregular}.

\begin{figure}
    \centering
    \includegraphics[width=0.5\textwidth]{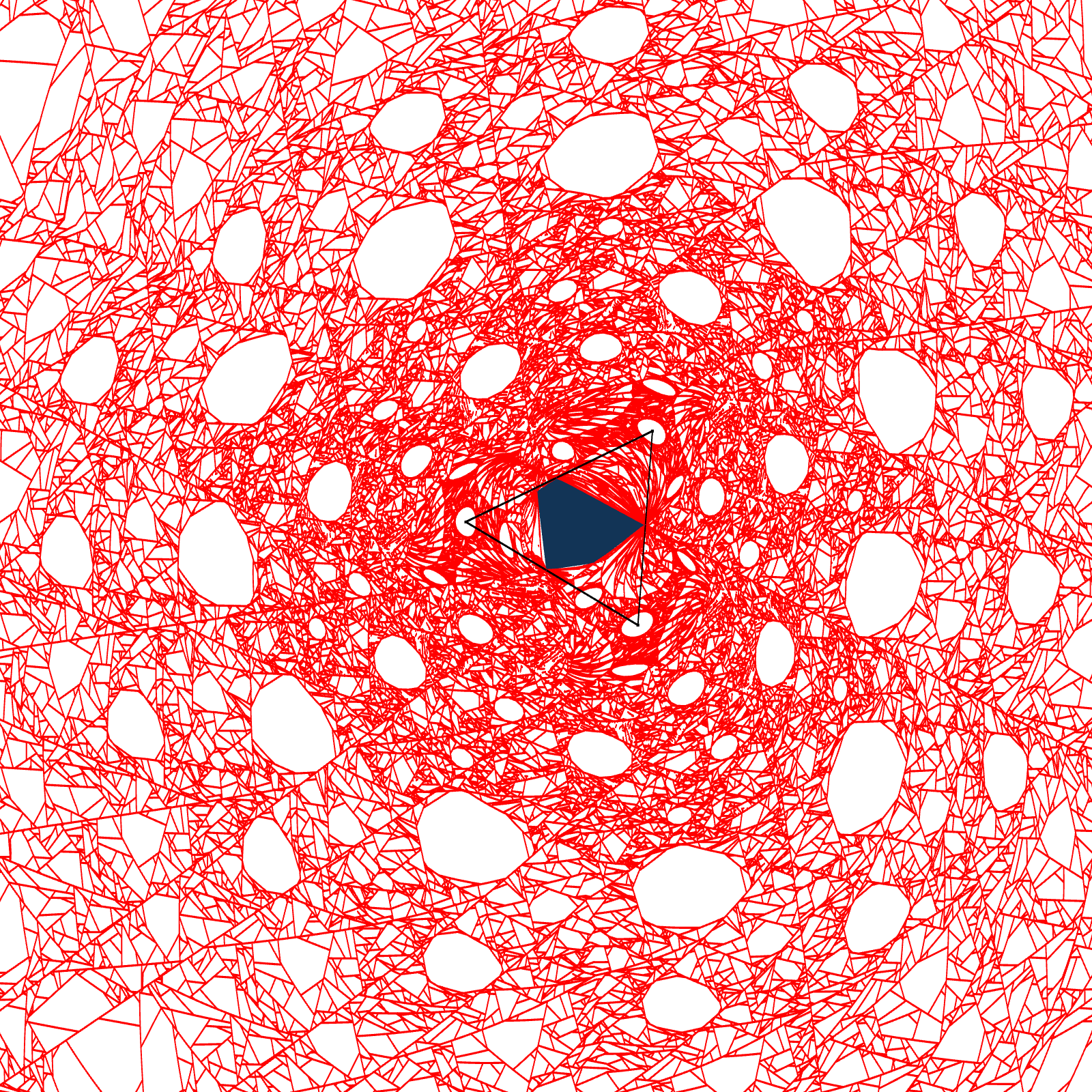}
    \caption{A coarse approximation of the singularity set of an irregular
        pentagon together with a 3-periodic orbit.}
    \label{fig:irregular}
\end{figure}

The question of existence of escaping orbits for polygonal outer billiards was
open for decades before being resolved in the positive by Schwartz, who showed
in 2009 that an irrational kite gives rise to orbits which diverge to infinity,
albeit in a highly non-monotonic manner \cite{kite1,kite2}.

Here, we address the same question for the outer length billiard.

\begin{question}
    Does there exist a polygon $Q$ for which the outer length billiard has
    unbounded orbits?
\end{question}

A number of computer experiments support the following conjectures.

\begin{conjecture}
    The singularity set of every polygon has Hausdorff dimension greater than 1.
    Furthermore, for every polygon, there exists an orbit whose closure is the
    singularity set, and which is therefore unbounded.
\end{conjecture}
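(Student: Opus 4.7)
The plan is to split the conjecture into its two claims and address them in order. Denote by $\Sigma_0$ the union of the $n$ clockwise side extensions of $Q$, and write the full singularity set as
$$\Sigma = \bigcup_{k \geq 0} T^{-k}(\Sigma_0).$$
Each finite iterate $T^{-k}(\Sigma_0)$ is a locally finite union of piecewise smooth arcs, so a naive estimate only gives $\dim_H \Sigma \geq 1$; to push the dimension strictly above $1$, one must exhibit genuinely self-similar, transverse accumulation. I would first analyze how $T^{-1}$ acts on a short arc that is transverse to the local continuation structure: each vertex $v_i$ cuts a neighborhood into sectors indexed by the pair $(L(x), R(x))$, and $T^{-1}$ sends a transverse arc in one sector to a disjoint collection of arcs in sectors closer to the table. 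The goal is to produce a finite symbolic word $w$ such that the branch $T^{-|w|}$ restricted to an appropriate quadrilateral region is a contracting iterated function system with at least two branches whose images are disjoint and lie in the same region. The attractor of that IFS has positive transverse Hausdorff dimension, and taking its product with the $1$-dimensional longitudinal direction along the preimage arcs would give $\dim_H \Sigma > 1$.

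For the density statement, the strategy is to establish a topological transitivity property for $T$ on its non-singular domain and then invoke Baire category. I would first observe that the set of points at which every forward iterate of $T$ is defined is a dense $G_\delta$ in $\mathbb{R}^2 \setminus Q$. Transitivity on this set would combine two ingredients. Far from the table, Theorem \ref{thm:circle} shows that once-around orbits shadow nearly-confocal ellipses; the jumps at unsteady points provide small but non-trivial radial drift, which one would use to connect large-radius annuli to smaller ones. Close to the table, the iterated preimages of $\Sigma_0$ are dense in $\Sigma$ essentially by construction, so a standard diagonal Baire argument produces a point whose forward $T$-orbit meets every open set intersecting $\Sigma$; the reverse containment (that the orbit cannot escape $\Sigma$) is automatic since the orbit accumulates only on points where the forward dynamics eventually fails.

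The main obstacle is the Hausdorff dimension lower bound. A countable union of rectifiable arcs has dimension exactly $1$, so the dimension excess must come from genuine hyperbolic self-similarity rather than from enumerating preimages. The standard dimension formula for an IFS requires uniform contraction and an open set condition, and neither is evidently present here: $T^{-1}$ is itself singular along the side extensions where one would most like to iterate, and the contraction ratios depend subtly on the angles and side lengths of $Q$. A plausible workaround is a renormalization argument in the spirit of Schwartz's analysis of the irrational kite, extracting a first-return map to a small quadrilateral region that is conjugate to a genuinely hyperbolic piecewise affine map. Producing such a renormalization for \emph{every} convex polygon, rather than for a hand-tuned example, is what makes the conjecture difficult; the same renormalization would, as a byproduct, supply the transitivity needed for the second claim.
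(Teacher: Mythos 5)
This statement is posed in the paper as a \emph{conjecture}, supported only by computer experiments (Figure \ref{fig:dense}); the paper offers no proof, so there is nothing to compare your argument against. What you have written is a research program rather than a proof, and you are candid about that, but two of the gaps deserve to be named more sharply because they affect whether the statement is even correctly formulated. First, the singularity set as literally defined, $\Sigma=\bigcup_{k\geq 0}T^{-k}(\Sigma_0)$, is a countable union of piecewise smooth arcs, and Hausdorff dimension is countably stable, so $\dim_H\Sigma=1$ exactly. Your proposed IFS attractor would consist of \emph{limit points} of these arcs, so what your strategy could conceivably establish is that the \emph{closure} of $\Sigma$ has dimension greater than $1$. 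That is surely what the conjecture intends (and what the pictures show), but your write-up does not distinguish the two, and the distinction is exactly where the difficulty lives: you must show that uncountably many transverse accumulation directions survive, with controlled separation, which is the open-set-condition problem you flag but do not resolve.

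Second, the density argument has a genuine logical error, not just a missing step. You assert that the reverse containment --- that the closure of the orbit lies inside $\Sigma$ --- ``is automatic since the orbit accumulates only on points where the forward dynamics eventually fails.'' This is false in general: an orbit of $T$ can accumulate on entirely non-singular points, for example on an invariant annulus far from the table (Theorem \ref{thm:circle} shows distant once-around orbits stay in thin annuli, so nothing prevents accumulation there) or on a periodic orbit such as the $3$-periodic orbits of Section \ref{sec:triangle}. A Baire/transitivity argument, even if you could establish transitivity, produces an orbit dense in some closed invariant set containing $\overline{\Sigma}$; getting the closure to equal $\overline{\Sigma}$ and nothing more requires showing that $\overline{\Sigma}$ is itself invariant and that the transitive point can be chosen inside it, neither of which your outline addresses. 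The renormalization route you suggest, modeled on Schwartz's kite analysis, is plausible, but as you note it would have to work uniformly over all convex polygons, and no candidate renormalization scheme is exhibited.
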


\begin{figure}
    \centering
    \includegraphics[width=0.48\textwidth]{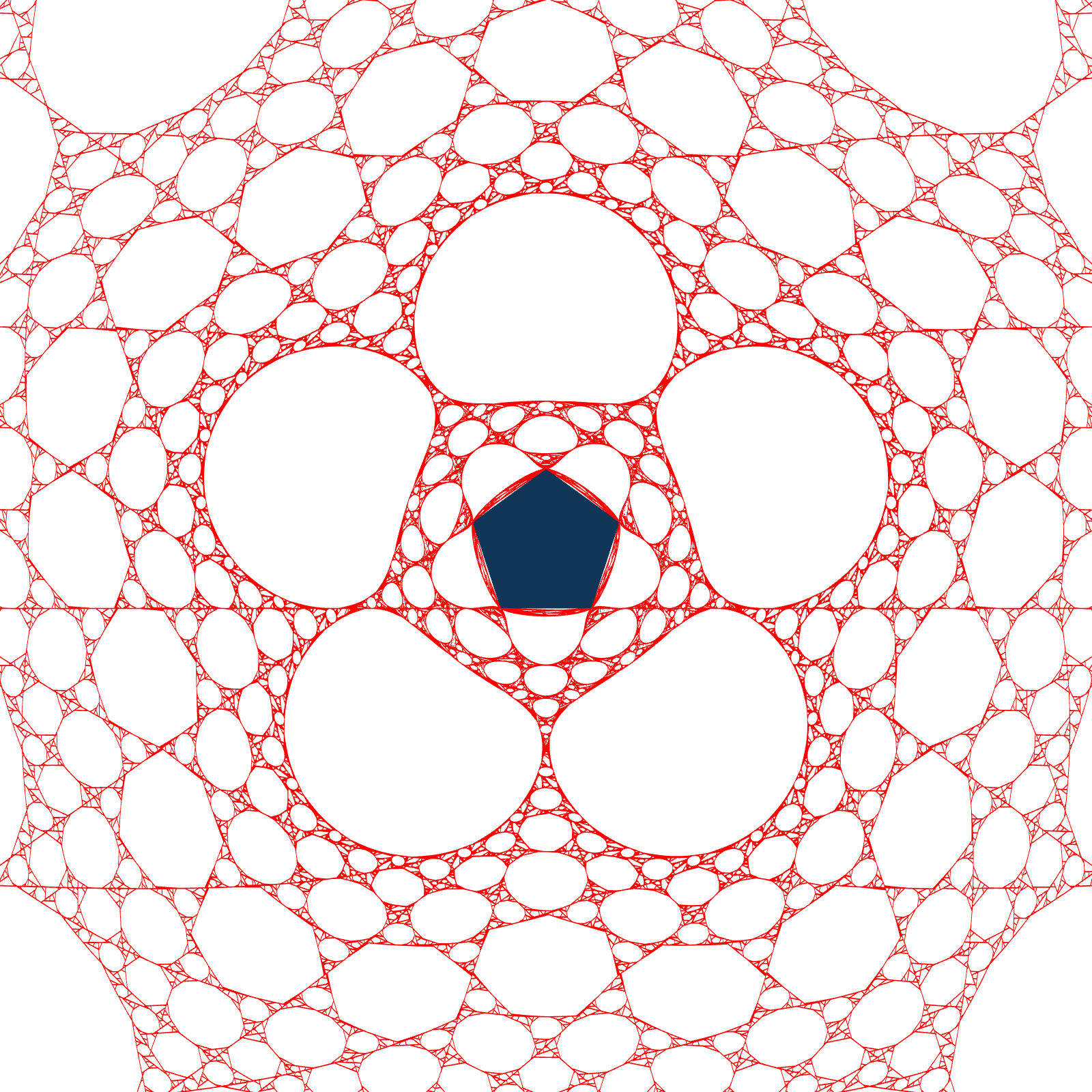}
    \hfill
    \includegraphics[width=0.48\textwidth]{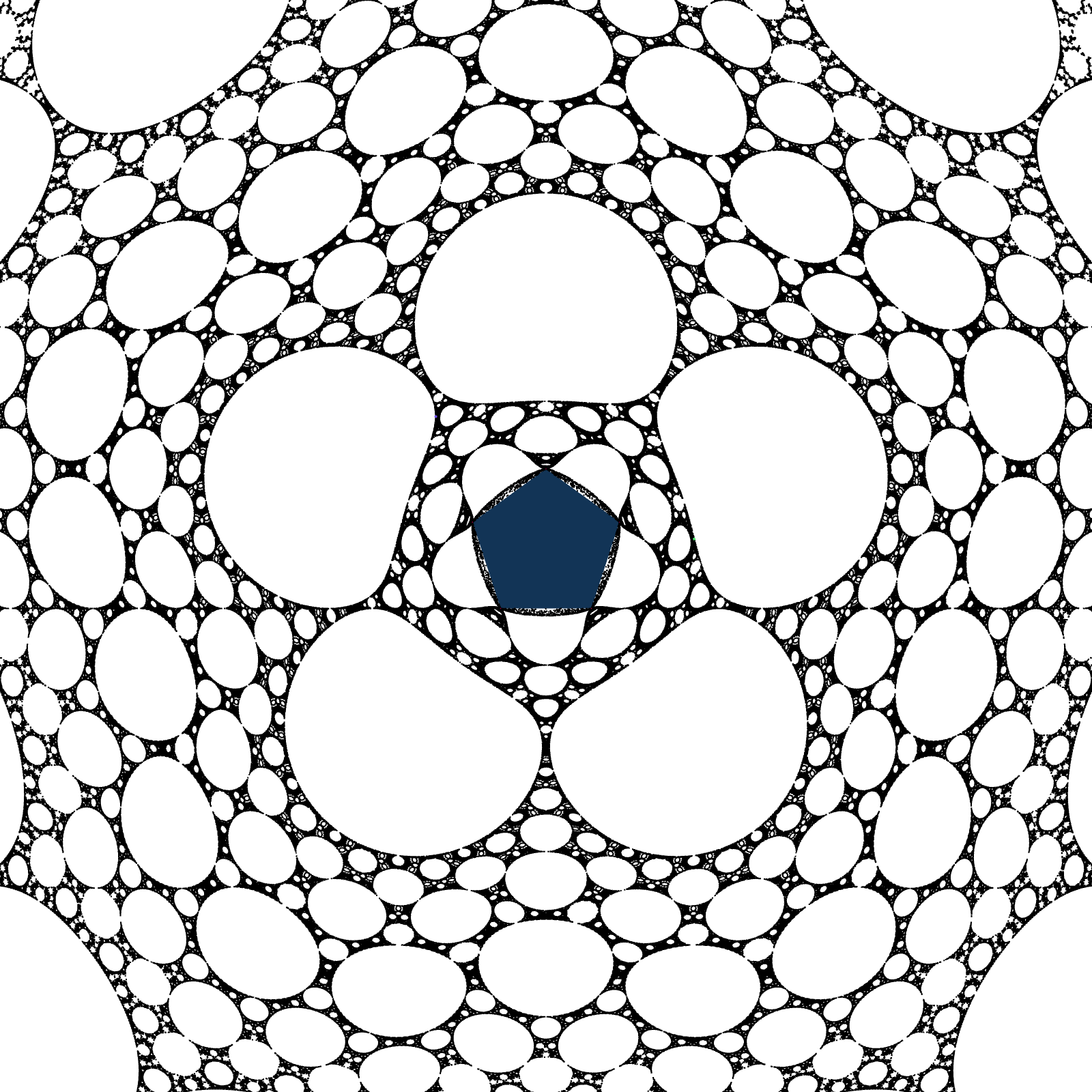}
    \caption{Left: the singularity set of the regular pentagon. Right: a single
    orbit which is approximately dense in the singularity.}
    \label{fig:dense}
\end{figure}

A sample orbit around a regular pentagon is shown in Figure \ref{fig:dense},
notice how the singularity set and orbit are visually identical.

\subsection{The square}
\label{sec:square}

\begin{conjecture}
    \label{conj:square}
    There exists an open ball of diverging orbits of the outer length billiard
    around a square.
\end{conjecture}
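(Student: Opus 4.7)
The plan is to exploit the four-fold symmetry of the square together with the steady/unsteady decomposition of Section~\ref{sec:infinity} to compute the once-around return map to high enough order that a positive radial drift can be extracted on an open set. Let $Q$ be the square centered at the origin and let $\Sigma$ be a radial half-line. Write the once-around return map as $F(r,\theta) = (r + \delta(r,\theta),\ \theta + \tau(r,\theta))$; the goal is to find a domain in $\Sigma$ on which $\delta$ is strictly positive and on which $F$ is non-contracting in the radial direction, so that an open ball of initial data gives rise to strictly increasing sequences of radii $\{r_n\}$.

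The steps I would carry out are as follows. First, enumerate the finitely many vertex-pair configurations arising during a once-around orbit at large $R$: by Corollary~\ref{cor:unsteady_count} there are at most nine steady phases, one per sector between consecutive side extensions. In each phase, Proposition~\ref{prop:2-gon} identifies the dynamics with inner billiards on a confocal family of nearly-circular ellipses, which can be written in closed form. Second, compute the ellipse-to-ellipse transition at each side-extension crossing by locating the continuation point and applying the estimate of Lemma~\ref{lem:unsteady}. Third, sum the per-jump radial corrections to obtain an expression for $\delta(r,\theta)$ expanded in powers of $1/r$. Fourth, identify an angular sector in which the lowest-order non-vanishing term of $\delta$ is strictly positive and fix a trajectory there. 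Fifth, bootstrap to an open ball by a cone-field argument on the linearization of $F$ along the chosen orbit, using the Hölder regularity of the return map away from the singular set.

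The principal obstacle is the third step. The outer area billiard about a square is completely integrable, with every orbit eventually periodic, so the leading contribution to $\delta$ — the part which, in view of Theorem~\ref{thm:centers}, tracks the outer area dynamics — must vanish identically for this particular table. Any divergence is therefore a subleading effect in which the bounded length-versus-area discrepancy of Lemma~\ref{lem:uniform} first enters asymmetrically, and the square's dihedral symmetry is likely to force further cancellations at several orders of the expansion. Extracting the first order at which $\delta$ is genuinely nonzero and sign-definite is the delicate calculation on which the proposal turns, and I expect it to require a renormalization scheme reminiscent of Schwartz's analysis of outer area billiards on irrational kites \cite{kite1,kite2}. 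A secondary obstacle is ruling out an invariant curve of $F$ that would trap the trajectory; the hope is that the jumps at side extensions break any such curve transversely, but this must be verified directly rather than inferred from the area analogy, since the area model is precisely the source of the obstruction in the first place.
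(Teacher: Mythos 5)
The statement you are addressing is a conjecture: the paper does not prove it, and neither do you --- your own third step is flagged as unresolved --- so the real question is whether your program is viable and how it compares with the strategy sketched in Section \ref{sec:square}. The central heuristic on which your plan turns is that the leading term of the radial drift $\delta(r,\theta)$ of the once-around return map must vanish because the outer \emph{area} billiard about a square is integrable, so that escape is a subleading effect to be dug out of an expansion in powers of $1/r$. Nothing in the paper supports this, and it is probably backwards. Theorem \ref{thm:centers} relates the \emph{circle centers} to the area billiard's limit shape; it does not say the length-billiard orbit shadows an area-billiard orbit, and the boundedness of distant area-billiard orbits about the square comes from its quasirational lattice structure, not from the identical vanishing of some drift function that the length billiard would inherit. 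Indeed the paper's conjectured mechanism, $T_n(C_x+nC_m,C_y)=(C_x+(n+1)C_m,C_y)+O(n^{-1})$ with $C_m>0$, is a \emph{constant} outward displacement per cycle --- exactly the $O(d)$ per revolution that Theorem \ref{thm:circle} permits. Your expansion would be hunting for the escape at the wrong order.

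More fundamentally, the once-around return map of a polygonal table does not admit a drift function with a uniform expansion in $1/r$ and coefficients varying nicely in $\theta$: the combinatorics of the orbit --- which vertex pairs are seen, and where the unsteady jumps of Corollary \ref{cor:unsteady_count} land relative to the side extensions --- depends on $r$ in an arithmetically sensitive way, and that sensitivity is the whole difficulty (it is precisely why Schwartz's kite argument requires renormalization, as you yourself note at the end). The paper's sketch confronts this by fixing an explicit family of symbolic words $A_{i,n}$, $B_{i,n}$, $T_n$ in the sixteen pieces $T_{ijk}$, with word length growing in $n$ through the factors $(E_1\circ E_3)^n$, and conjecturing that a specific point is carried along this family; that is an orbit-specific, Dolgopyat--Fayad-style construction, not a sector of sign-definite averaged drift. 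Finally, your step five (cone-field bootstrap to an open ball) needs the escaping orbit to keep a quantitatively controlled distance from the singularity set at every iterate, with that distance not shrinking faster than the derivative of the return map grows; this cannot be extracted from H\"older regularity alone and is where the real work in \cite{df} lives. If you pursue the conjecture, start from the paper's piece decomposition and verify the asymptotics of $T_n$ directly rather than from a once-around averaging.
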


Conjecture \ref{conj:square} is likely more tractable to prove, so we will go
into some detail about what appears to be true.

Let $Q$ be the square with vertices at $(\pm1,\pm1)$; let $v_i$ be the vertex in
the $i$-th quadrant, $i=1,2,3,4$. A given point $x$ at which the outer length
billiard map is defined gives rise to three lines, $\ell_1$, $\ell_2$, and
$\ell_3$ (with notation as in the construction in Section \ref{s:construction}).
Each of these lines passes through a single vertex of $Q$. We therefore denote
by $T_{ijk}$ the piece of the map such that $\ell_1$ passes through $v_i$, etc.
One can easily verify that the combinations which occur for the square are:
$$T_{123},\quad T_{124},\quad T_{131},\quad T_{134},$$
together with their cyclic permutations (mod 4). This yields 16 pieces of the
map, 12 of which have unbounded domains, as shown in Figure \ref{fig:square}.

\begin{figure}
    \centering
    \begin{tikzpicture}
    \node[anchor=south west, inner sep=0] (img) at (0,0)
        {\includegraphics[width=0.48\textwidth]{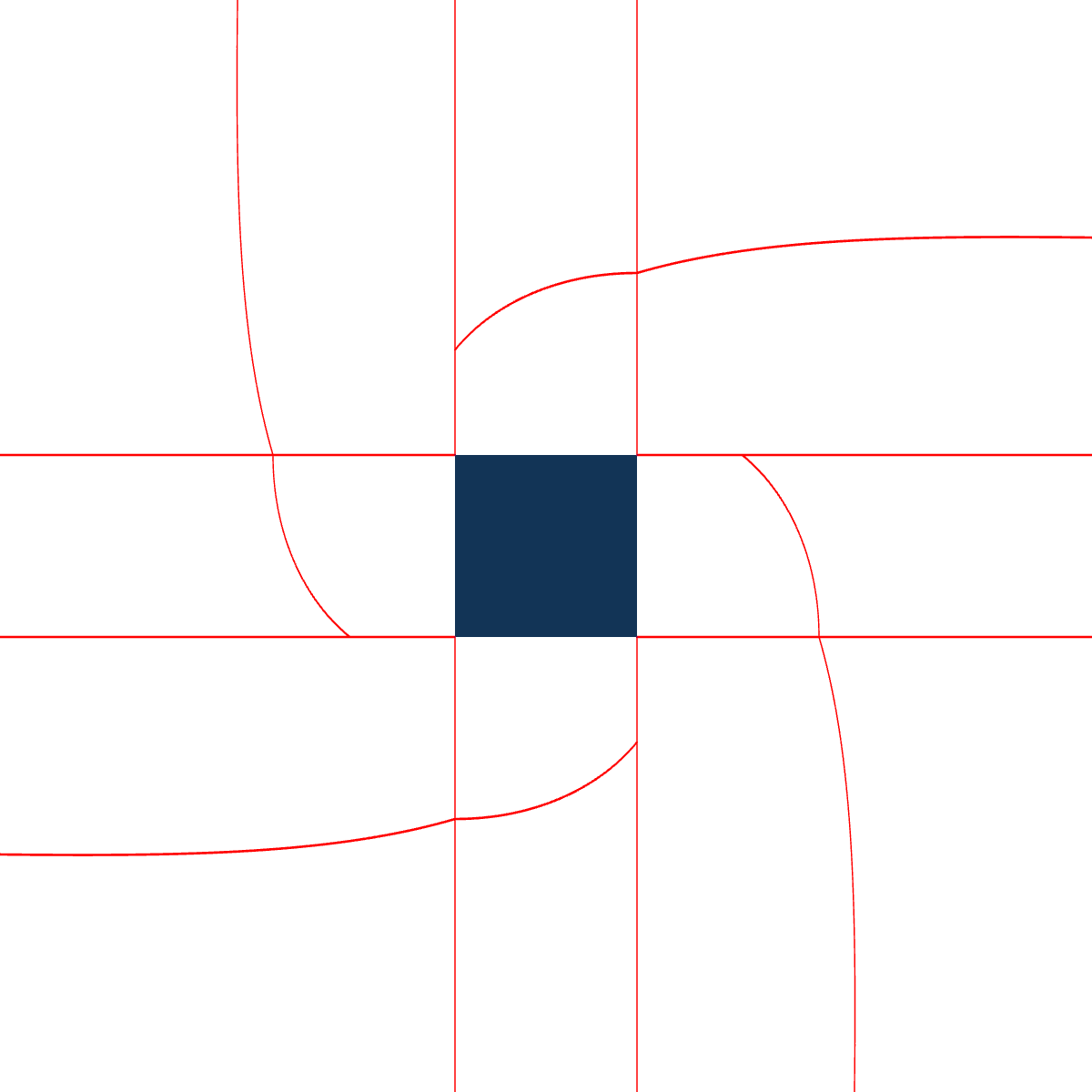}};
    \begin{scope}[x={(img.south east)}, y={(img.north west)}]
        \node at (0.5,0.65) {$T_{123}$};
        \node at (0.5,0.9) {$T_{124}$};
        \node at (0.1,0.7) {$T_{131}$};
        \node at (0.3,0.8) {$T_{134}$};
        
        \node at (0.35,0.5) {$T_{234}$};
        \node at (0.1,0.5) {$T_{231}$};
        \node at (0.3,0.1) {$T_{242}$};
        \node at (0.2,0.3) {$T_{241}$};

        \node at (0.5,0.35) {$T_{341}$};
        \node at (0.5,0.1) {$T_{342}$};
        \node at (0.9,0.3) {$T_{313}$};
        \node at (0.7,0.2) {$T_{312}$};

        \node at (0.65,0.5) {$T_{412}$};
        \node at (0.9,0.5) {$T_{413}$};
        \node at (0.7,0.9) {$T_{424}$};
        \node at (0.8,0.7) {$T_{423}$};
    \end{scope}
  \end{tikzpicture}
    \hfill
    \includegraphics[width=0.48\textwidth]{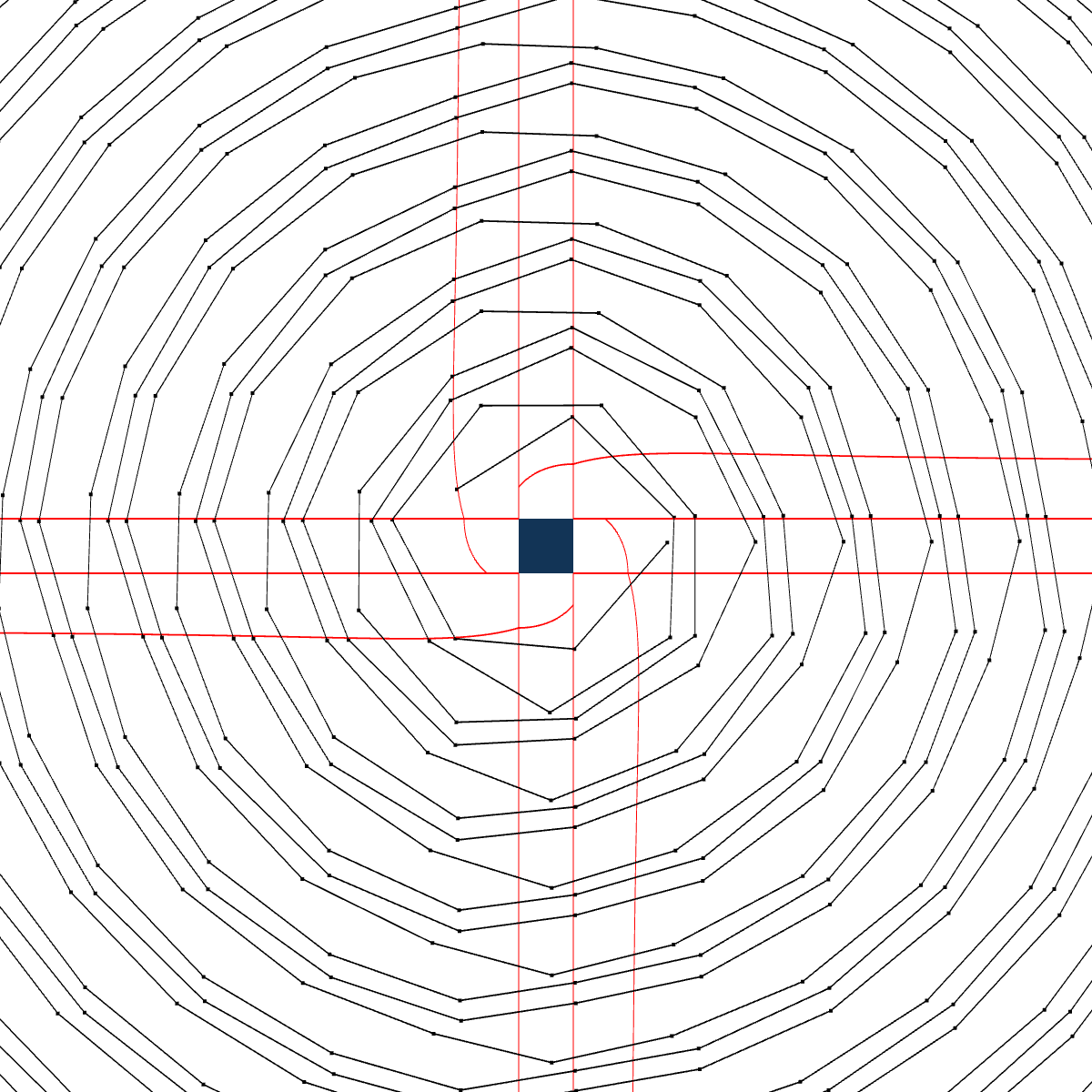}
    \caption{Left: Side extensions of the square and their preimages with a
        labeling of the pieces of the map. Right: a portion of the
        orbit conjecturally escaping ball with line segments connecting each
        point to its second image.}
    \label{fig:square}
\end{figure}

Fix the following notation:
$$E_1:=T_{313},\quad E_2:=T_{424},\quad E_3:=T_{131},\quad E_4:=T_{242}$$
\begin{align*}
    A_{1,n}&:=T_{134}\circ(E_1\circ E_3)^n\circ E_1\circ T_{231}\\
    A_{2,n}&:=T_{241}\circ(E_2\circ E_4)^n\circ E_2\circ T_{342}\\
    A_{4,n}&:=T_{423}\circ(E_3\circ E_1)^n\circ E_3\circ T_{124}\\
    B_{1,n}&:=T_{312}\circ(E_3\circ E_1)^{n+1}\circ T_{231}\\
    B_{2,n}&:=T_{423}\circ(E_4\circ E_2)^{n+1}\circ T_{342}\\
    B_{3,n}&:=T_{134}\circ(E_1\circ E_3)^{n+1}\circ T_{413}\\
    T_n&:=B_{1,n}\circ B_{2,n}\circ B_{3,n}\circ A_{2,n}\circ A_{1,n}\circ
    A_{4,n}.
\end{align*}
    
We expect that there are constants $C_m>0,C_x,C_y$ such that
$$T_n(C_x+nC_m,C_y)=(C_x+(n+1)C_m,C_y)+O(n^{-1}).$$
In other words, we hope to demonstrate an orbit of $T$ with a subsequence which
walks away in a nearly straight line in the positive $x$ direction. The approach
of Dolgopyat and Fayad in \cite{df} appears promising.

\printbibliography

\end{document}